\newcommand{\dnote}[2][red]{{\leavevmode\unskip\footnotesize\hspace{0.75em}\textcolor{#1}{[#2]}}}
\newcommand{\todo}[1]{\dnote{\textbf{todo:} #1}}
\definecolor{GREEN}{rgb}{0,1,0}
\definecolor{green4}{rgb}{.1,.5,.1}
\definecolor{blue}{rgb}{0,0,1}
\definecolor{gray}{rgb}{0.5,0.5,0.5}
 \newcommand{\ep}{\end{proof}}
 \newif\ifpctex
  \newtheorem{theorem}{Theorem}
  \newtheorem{definition}{Definition}[section]
  \newtheorem{defi}[definition]{Definition}
  \newtheorem{cond}[definition]{Condition}
  \newtheorem{proposition}[definition]{Proposition}
  \newtheorem{lemma}[definition]{Lemma}
  \newtheorem{corollary}[definition]{Corollary}
  \newcommand{\beCond}[2]{\Rand{\vspace{0,6cm}\tt #1}\begin{cond}[#2]
  \label{#1}} \theoremstyle{definition}
  \newtheorem{remark}[definition]{Remark}
  \numberwithin{equation}{section}
  \newtheoremstyle{step}{3pt}{0pt}{\itshape}{}{\bf}{}{.5em}{}
\theoremstyle{step} \newtheorem{step}{Step}
\newcommand{\E}{\mathbb{E}}
\newcommand{\M}{\mathbb{M}}
\newcommand{\PP}{\mathbb{P}}
\newcommand{\R}{\mathbb{R}}
\newcommand{\N}{\mathbb{N}}
\newcommand{\T}{\mathbb{T}}
\newcommand{\CB}{\mathcal{B}}
\newcommand{\CC}{\mathcal{C}}
\newcommand{\CD}{\mathcal{D}}
\newcommand{\CP}{\mathcal{P}}
\newcommand{\CS}{\mathcal{S}}
\newcommand{\FC}{\mathfrak{C}}
\newcommand{\Fs}{\mathfrak{s}}
\newcommand{\Ft}{\mathfrak{t}}
\newcommand{\Fc}{\mathfrak{c}}
\newcommand{\Rand}[1]{\marginpar{#1}} 
\newcommand{\be}[1]{\begin{equation}\label{#1}}
\newcommand{\ee}{\end{equation}}
\newcommand{\bew}[1]{\Rand{\vspace{0,6cm}\tt #1}\begin{equation*}\label{#1}}
\newcommand{\eew}{\end{equation*}}
\newcommand{\bea}[1]{\Rand{\vspace{0,6cm}\tt #1}\begin{eqnarray*}\label{#1}}
\newcommand{\eea}[1]{\end{eqnarray*}}
\newcommand{\beL}[2]{\Rand{\vspace{0,6cm}\tt #1}\begin{lemma}[#2]\label{#1}}
\newcommand{\beD}[2]{\Rand{\vspace{0,6cm}\tt #1}\begin{definition}[#2]\label{#1}}
\newcommand{\beT}[2]{\Rand{\vspace{0,6cm}\tt #1}\begin{theorem}[#2]\label{#1}}
\newcommand{\beP}[2]{\Rand{\vspace{0,6cm}\tt #1}\begin{proposition}[#2]\label{#1}}
\newcommand{\beC}[1]{\Rand{\vspace{0,6cm}\tt #1}\begin{corollary}\label{#1}}
\newcommand{\beR}[1]{\Rand{\vspace{0,6cm}\tt #1}\begin{remark}[#1]\label{#1}}
\DeclareMathAlphabet{\mathpzc}{OT1}{pzc}{m}{it}
\begin{document}

\title[The algebraic $\alpha$-Ford tree under evolution]{The algebraic $\alpha$-Ford tree under evolution}

\author{Josu\'e Nussbaumer}
\address{Josu\'e Nussbaumer\\ Fakult\"at f\"ur Mathematik\\
Universit\"at Duisburg-Essen, Campus Essen\\  Universit\"atsstra{\ss}e~2\\
 45132 Essen \\ Germany}
 \email{josue.nussbaumer@uni-due.de}

\author{Anita Winter}
\address{Anita Winter \\ Fakult\"at f\"ur Mathematik\\
Universit\"at Duisburg-Essen, Campus Essen\\  Universit\"atsstra{\ss}e~2\\
 45132 Essen \\ Germany}
\email{anita.winter@uni-due.de}

\thanks{Research was supported by the DFG through the SPP Priority Programme 1590.}

\thispagestyle{empty}

\keywords{algebraic measure, Kingman coalescent, Yule tree, martingale problems}

\subjclass[]{}

  \begin{abstract}
    Null models of binary phylogenetic trees are useful for testing hypotheses on real world phylogenies. In this paper we consider
    phylogenies as binary trees without edge lengths together with a sampling measure and encode them as
    algebraic measure trees. This allows to describe the degree of similarity between actual and simulated phylogenies by focusing  on the sample shape of subtrees and their subtree masses. We describe the annealed law of the statistics of subtree masses of null models, namely the branching tree, the  coalescent tree, and the comb tree in more detail. Finally, we use methods from martingale problems to characterize evolving phylogenetic trees in the diffusion limit.
    \end{abstract}

 \maketitle

 {
 \tableofcontents
 }

 \section{Introduction and motivation}
 \label{S:introduction}
 {An {\em $N$-cladogram} is a semi-labeled, un-rooted and binary
tree with $N\ge 2$
{\em leaves} labeled $\{1,2,...,N\}$ and with $N-2$ unlabeled internal nodes.
Cladograms are particular phylogenetic trees for which no information on the edge lengths is available, and which therefore only capture the tree structure.

As prototype models are needed for testing real world phylogenies, parametric families of random cladograms have been studied (compare~\cite{Aldous1996,Ford2005}). One such family introduced in \cite{Ford2005} is today referred to as the {\em $\alpha$-Ford model} {(see also \cite{HaasMiermontPitmanWinkel2008,ChenFordWinkel2009,PitmanWinkel2009,Stefansson2009,CoronadoMirRossello2018})}.
Fix $\alpha\in[0,1)$ and $N\in\mathbb{N}$. The {\em $\alpha$-Ford tree} with $N$ leaves is an $N$-cladogram
constructed recursively as follows (compare Figure~\ref{Fig03}):
\begin{enumerate}
        \item Start with one edge, and label its leaves by $\{1,2\}$ (yielding the only $2$-cladogram).
        \item Given the $\alpha$-Ford tree with {$k\ge 2$} leaves, assign weight $1-\alpha$ to each external and weight $\alpha$ to each internal edge.
        \item Choose an edge at random according to these weights and to the middle of this edge, insert a new leaf together with an edge. Label the new leaf $k+1$.
        \item Stop when the current  binary combinatorial tree has $N$ leaves.
        \item Permute the leaf labels.
        \end{enumerate}
        Note that permuting the labels in the last step ensures {\em consistency}. That is, for all $1\le m\le N$, restricting to the sub-cladogram spanned by a uniform sample of size $m$ from the {leaf set} $\{1,...,N\}$ yields an $m$-cladogram which equals in law the $\alpha$-Ford tree with $m$ leaves.

The case $\alpha=1$ is excluded as for $k=2,3$ all edges have weight $0$, and therefore the above construction is not well-defined. However,
we can extend the construction with some care.
To overcome the issue, let us  simply choose the edge, at which we are inserting the next edge, uniformly among the external edges. As soon as $k=4$, there is only one possible tree shape with exactly one inner edge and the problem disappears.

The $\alpha$-Ford model interpolates continuously between three popular models ranging from the
coalescent tree (also known as Yule tree) in the case  $\alpha=0$  via the branching tree (also known as uniform tree) in the case $\alpha=\frac{1}{2}$
to the
totally unbalanced tree (also known as comb tree) in the case $\alpha=1$. In this paper we are interested in limit cladograms as the number of leaves goes to infinity.
For that we will rely on the notion of continuum
algebraic measure trees recently introduced in \cite{LoehrWinter}.

In what follows, we refer to $(T,c)$ as an {\em algebraic tree} if $T\not=\emptyset$ is a
set equipped with a {\em branch point map} $c\colon T^3\to T$ satisfying consistency conditions (see Definition~\ref{Def:001}).
Even though algebraic trees can be seen as metric trees where one has ``forgotten'' the metric, the branch point map is defined such that the notion of leaves, branch
points, degree, subtrees, line segments, open sets, etc.\ can be formalized without reference to a metric and
agree with the corresponding notion in the metric tree. An algebraic measure tree $(T,c,\mu)$ consists of a
separable algebraic tree $(T,c)$ together with a probability measure $\mu$ on the Borel $\sigma$-algebra ${\mathcal B}(T)$.
The $\alpha$-Ford diffusion limit takes values in the state space
\begin{equation}
\label{e:T2}
   \mathbb{T}_2:=\big\{(T,c,\mu)\in\mathbb{T}:\,\mbox{degrees at most $3$, atoms of $\mu$ only at leaves}\big\}
\end{equation}
of (equivalence classes of) binary algebraic measure trees with no atoms on the skeleton, and more specifically in its subspace
\begin{equation}
\label{e:Tcont}
   \mathbb{T}^{\mathrm{cont}}_2:=\big\{(T,c,\mu)\in\mathbb{T}_2:\,\mbox{$\mu$ non-atomic}\big\}
\end{equation}
of so called {\em continuum} binary algebraic measure trees.
We equip $\mathbb{T}_2$ with the  so-called 
{\em sample shape convergence} (Definition~\ref{Def:002}), which says that a sequence
$(\mathfrak{t}_N)_{N\in\mathbb{N}}$ converges to $\mathfrak{t}$ in $\mathbb{T}_2$ if the random shapes $\mathfrak{s}_{(T,c)}(x_1,...,x_m)$ of sub-cladograms spanned by finite samples $(x_1,...,x_m)$ of size $m$ converge weakly with respect to the discrete topology
(compare Definition~\ref{Def:003} and Figure~\ref{f:shapefunction}).
It is shown in \cite{LoehrWinter} that both $\mathbb{T}_2$ and
$\mathbb{T}_2^{\mathrm{cont}}$ are compact, which is very convenient for showing tightness.

To get started we first introduce the $\alpha$-Ford models with an infinite number of leaves in $\mathbb{T}_2$. To do this, we consider the $\alpha$-Ford tree with $N$ leaves as a random element in the subspace
\begin{equation}
\label{e:003}
	\T_2^N:=\big\{(T,c,\mu)\in\T_2:\#\mathrm{lf}(T,c)=N\mbox{ and }\mu=\tfrac{1}{N}\sum\nolimits_{u\in\mathrm{lf}(T,c)}\delta_u\big\},
\end{equation}
where $\mathrm{lf}(T,c)$ denotes the set of leaves.
Then, using the consistency property of Ford models, we can show that, for each $\alpha\in[0,1]$, the sequence of such constructed random binary algebraic measure trees converges to an element of $\T_2^\mathrm{cont}$, that we call \emph{$\alpha$-Ford algebraic measure tree} (with infinite number of leaves). For $\alpha=\frac{1}{2}$, we get the algebraic measure Brownian CRT, which is the unique continuum random algebraic measure tree
whose i.d.d.\ samples span uniform binary trees. For $\alpha=0$, we call this tree the \emph{Kingman algebraic measure tree} as it equals in law the algebraic measure tree read off from the Kingman coalescent.


In statistical applications of phylogenies with edge lengths, it has been exploited that
all sufficient information about genealogies is contained in the lengths of subtrees
spanned by a finite sample. One such example is the Watterson estimator for the mutation rate of a neutral population, which counts the number of segregating site that is often represented by the edge lengths (\cite{Watterson1975,BaakeHaeseler1999}). In this paper we  want to introduce with the {\em sample subtree mass distribution} a similar statistics which is more suited for algebraic measure trees, for which a priori edge lengths are not defined.
For that, consider for a branch point $v\in \mathrm{br}(T)$} the three subtree
components attached to $v$ and denote for each $u\not =v$ by $\CS_v(u)$ the subtree component that contains $u\in T$ (see \eqref{e:components} below for a precise definition).
For $\underline{u}=(u_1,u_2,u_3) \in T^3$, let
\begin{equation}\label{e:eta3}
	\underline{\eta}(\underline{u}) := \big(\eta_i(\underline{u})\big)_{i=1,2,3} := \big( \mu(\CS_{c(\underline{u})}(u_i)) \big)_{i=1,2,3}
\end{equation}
be the vector of the three masses of the components connected to $c(\underline{u})$. We refer to its annealed law as {\em sample subtree mass distribution}. It allows to distinguish between $\alpha$-Ford models for different $\alpha\in[0,1]$. For $\alpha=1$ it can be easily read off from the associated  comb tree (\cite[Proposition~1.6.8]{Winter}). For $\alpha=\frac{1}{2}$ a more elaborate combinatorial argument shows that is equal to the Dirichlet distribution with all parameter $(\frac{1}{2},\frac{1}{2},\frac{1}{2})$ (compare~\cite[Theorem~2]{Aldous1994} or \cite[Proposition~5.2]{LoehrMytnikWinter}). The case $\alpha=0$ is treated in Proposition~\ref{P:001} where we show that the sample subtree mass distribution of the Kingman algebraic measure tree equals in distribution the symmetrization of {$(B_{1,2}B_{2,2},B_{1,2}(1-B_{2,2}),1-B_{1,2})$},
where {$B_{1,2}$ and $B_{2,2}$} are independent Beta distributed random variables with parameters {$(1,2)$ and $(2,2)$}, respectively.  \smallskip

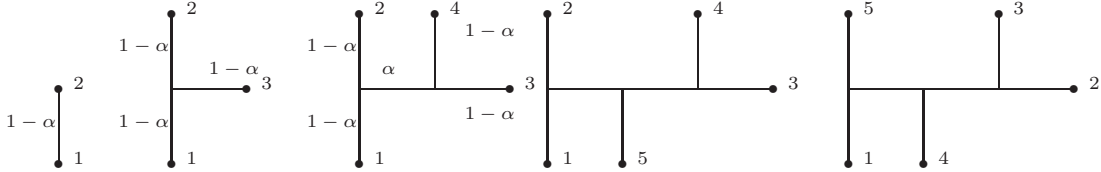
\begin{figure}
\setlength{\unitlength}{1cm}
\begin{picture}(30,3)(1,0)
%
\put(2.5,0){\line(0,1){1}}
\put(2.5,0){\circle*{.1}}
\put(2.7,0){{\tiny 1}}
\put(2.5,1){\circle*{.1}}
\put(2.7,1){{\tiny 2}}
\put(1.8,0.5){{\tiny $1-\alpha$}}
\put(4,0){\line(0,1){2}}
\put(4,0){\circle*{.1}}
\put(4.2,0){{\tiny 1}}
\put(4,2){\circle*{.1}}
\put(4.2,2){{\tiny 2}}
\put(4,1){\line(1,0){1}}
\put(5,1){\circle*{.1}}
\put(5.2,1){{\tiny 3}}
\put(3.3,0.5){{\tiny $1-\alpha$}}
\put(3.3,1.5){{\tiny $1-\alpha$}}
\put(4.5,1.2){{\tiny $1-\alpha$}}
\put(6.5,0){\line(0,1){2}}
\put(6.5,0){\circle*{.1}}
\put(6.7,0){{\tiny 1}}
\put(6.5,2){\circle*{.1}}
\put(6.7,2){{\tiny 2}}
\put(6.5,1){\line(1,0){2}}
\put(8.5,1){\circle*{.1}}
\put(8.7,1){{\tiny 3}}
\put(7.5,1){\line(0,1){1}}
\put(7.5,2){\circle*{.1}}
\put(7.7,2){{\tiny 4}}
\put(5.8,0.5){{\tiny $1-\alpha$}}
\put(5.8,1.5){{\tiny $1-\alpha$}}
\put(7.9,1.7){{\tiny $1-\alpha$}}
\put(7.9,.6){{\tiny $1-\alpha$}}
\put(6.8,1.2){{{\tiny $\alpha$}}}
\put(9,0){\line(0,1){2}}
\put(9,0){\circle*{.1}}
\put(9.2,0){{\tiny 1}}
\put(9,2){\circle*{.1}}
\put(9.2,2){{\tiny 2}}
\put(9,1){\line(1,0){3}}
\put(12,1){\circle*{.1}}
\put(12.2,1){{\tiny 3}}
\put(11,1){\line(0,1){1}}
\put(11,2){\circle*{.1}}
\put(11.2,2){{\tiny 4}}
\put(10,1){\line(0,-1){1}}
\put(10,0){\circle*{.1}}
\put(10.2,0){{\tiny 5}}
\put(13,0){\line(0,1){2}}
\put(13,0){\circle*{.1}}
\put(13.2,0){{\tiny 1}}
\put(13,2){\circle*{.1}}
\put(13.2,2){{\tiny 5}}
\put(13,1){\line(1,0){3}}
\put(16,1){\circle*{.1}}
\put(16.2,1){{\tiny 2}}
\put(15,1){\line(0,1){1}}
\put(15,2){\circle*{.1}}
\put(15.2,2){{\tiny 3}}
\put(14,1){\line(0,-1){1}}
\put(14,0){\circle*{.1}}
\put(14.2,0){{\tiny 4}}
\end{picture}
\caption{The $5$ steps in constructing the $\alpha$-Ford tree with $5$ leaves.}
\label{Fig03}
\end{figure}

{An important ingredient for several algorithms that
reconstruct cladograms from DNA data
are
Markov chains that} move through a space of finite trees
(see, for example, \cite{Felsenstein2003} for a survey on Markov chain
Monte Carlo algorithms in maximum likelihood tree reconstruction).
The present paper  has a focus on the one-parameter family of Markov chains
on the space ${\mathcal C}_m$ of all $m$-cladograms which are related to the $\alpha$-Ford model in the following way.
Fix $\alpha\in[0,1]$. Rather than adding new leaves, we keep the number of leaves constant by first removing a leaf picked uniformly at random and then inserting it into an edge chosen at random according to the $\alpha$-Ford weights.  More detailed, {for each
pair $(k,e)$} consisting of a leaf and an edge (other than the edge adjacent to $k$) at rate $1$, the
Markov chain jumps from its current state $\mathfrak{t}$ to $\mathfrak{t}^{(k,e)}$, where the latter is
obtained as follows (see Figures~\ref{Fig:01a} and \ref{Fig:01b}):
\begin{itemize}
\item
erase the unique edge (including the incident vertices) which connects $k$ to the sub-tree spanned by all leaves but $k$,
\item
split the remaining subtree at the edge $e$ into two pieces, and
\item
reintroduce the above edge (including $k$ and the branch point) at the split point.
\end{itemize}

We call this Markov chain the \emph{$\alpha$-Ford chain} on $m$-cladograms. One can easily check that the $\alpha$-Ford model is the stationary distribution, and that
the $\alpha$-Ford chain is symmetric if and only if $\alpha=\frac{1}{2}$. In the latter case the mixing and relaxation time has been studied in detail in \cite{Aldous2000,Schweinsberg2001}. This case is therefore often referred to as the {\em Aldous move} or the {\em Aldous chain on cladograms}.
To see why the $\alpha$-Ford chain is  not symmetric for general $\alpha\in[0,1]$, notice that inserting a leaf at an edge creates a {\em cherry leaf} if and only if the edge was external. Therefore the time reversed $\alpha$-Ford chain picks at rate $(1-\alpha)$ a pair consisting of a cherry leaf and an edge,
and at rate $\alpha$ a pair consisting of a non-cherry leaf and an edge at random, and inserts the picked leaf at the chosen edge. The discrepancy $\beta_\alpha^m(\mathfrak{t})$ between the total backward and forward rate at the current state $\mathfrak{t}$ is a potential which links the forward $X^{m,\alpha}$ and backward $\alpha$-Ford chain $Y^{m,\alpha}$ via a {\em Feynman-Kac duality}: for all $\mathfrak{s},\mathfrak{t}\in\mathfrak{C}_m$,
\begin{equation}
\label{e:001}
   \mathbb{P}_{\mathfrak{s}}\big(\big\{X^{m,\alpha}_t=\mathfrak{t}\big\}\big)
   =
   \mathbb{E}_{\mathfrak{t}}\Big[\mathbf{1}_{\mathfrak{s}}
   \big(Y^{m,\alpha}_t\big)\exp\big(\int^t_0\beta_\alpha^m(Y^{m,\alpha}_s)\mathrm{d}s\big)\Big]
\end{equation}
(compare Proposition~\ref{P:Feynman}).

\begin{figure}
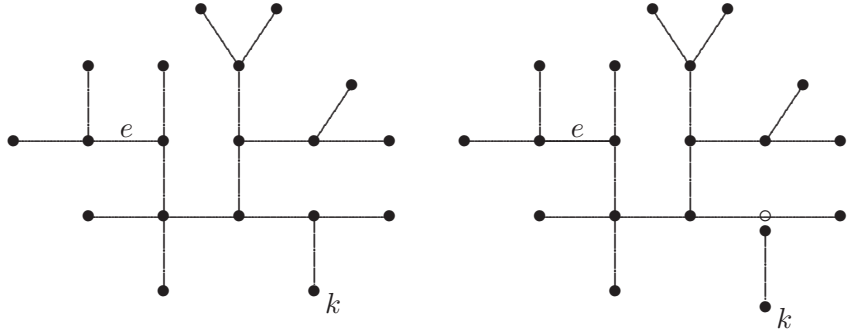

\centerline{
\beginpicture
	\setcoordinatesystem units <.5cm,.5cm>
	\setplotarea x from -0.5 to 22, y from -3.5 to 7
	\plot 2 0 10 0 /
	\plot 8 0 8 -1.78 /
	\plot 4 0 4 4 /
	\plot 2 2 0 2 /
	\plot 2 2 4 2 /
	\plot 2 2 2 4 /
	\plot 6 0 6 4 /
	\plot 6 2 10 2 /
	\plot 6 4 7 5.5 /
	\plot 8 2 9 3.5 /
	\plot 6 4 5 5.5 /
	\plot 4 0 4 -2 /
	\put{$\bullet$} [lC] at 3.8 -2
	\put{$\bullet$} [lC] at 4.8 5.5
	\put{$\bullet$} [lC] at 7.8 2
	\put{$\bullet$} [lC] at 9.8 2
	\put{$\bullet$} [lC] at 8.8 3.5
	\put{$\bullet$} [lC] at 6.8 5.5
	\put{$\bullet$} [lC] at 5.8 2
	\put{$\bullet$} [lC] at 5.8 4
	%
	\put{$\bullet$} [lC] at 1.8 4
	\put{$\bullet$} [lC] at 1.8 2
	\put{$\bullet$} [lC] at -.2 2
	\put{$\bullet$} [lC] at 3.8 2
	\put{$\bullet$} [lC] at 3.8 4
	\put{$\bullet$} [lC] at 1.8 0
	\put{$\bullet$} [lC] at 3.8 0
	\put{$\bullet$} [lC] at 5.8 0
	\put{$\bullet$} [lC] at 7.8 0
	\put{$\bullet$} [lC] at 7.8 -2
	\put{{$k$}} [IC] at 8.5 -2.3
	\put{$\bullet$} [lC] at 9.8 0
	\put{$e$} [IC] at 3 2.3
	\plot 14 0 22 0 /
	\plot 20 -.4 20 -2.18 /
	\plot 16 0 16 4 /
	\plot 16 2 14 2 /
	\plot 12 2 16 2 /
	\plot 14 2 14 4 /
	\plot 18 0 18 4 /
	\plot 18 2 22 2 /
	\plot 18 4 19 5.5 /
	\plot 20 2 21 3.5 /
	\plot 18 4 17 5.5 /
	\plot 16 0 16 -2 /
	\put{$\bullet$} [lC] at 15.8 -2
	\put{$\bullet$} [lC] at 16.8 5.5
	\put{$\bullet$} [lC] at 19.8 2
	\put{$\bullet$} [lC] at 21.8 2
	\put{$\bullet$} [lC] at 20.8 3.5
	\put{$\bullet$} [lC] at 18.8 5.5
	\put{$\bullet$} [lC] at 17.8 2
	\put{$\bullet$} [lC] at 17.8 4
	%
	\put{$\bullet$} [lC] at 13.8 4
	\put{$\bullet$} [lC] at 13.8 2
	\put{$\bullet$} [lC] at 11.8 2
	\put{$\bullet$} [lC] at 15.8 2
	\put{$\bullet$} [lC] at 15.8 4
	\put{$\bullet$} [lC] at 13.8 0
	\put{$\bullet$} [lC] at 15.8 0
	\put{$\bullet$} [lC] at 17.8 0
	\put{$\circ$} [lC] at 19.8 0
	\put{$\bullet$} [lC] at 19.8 -0.4
	\put{$\bullet$} [lC] at 19.8 -2.4
	\put{$k$} [IC] at 20.5 -2.7
	\put{$\bullet$} [lC] at 21.8 0
	\put{$e$} [IC] at 15 2.3
\endpicture
}
\caption{At rate $(1-\alpha)m(m-1)$ and rate $\alpha (m-4)$ a) a leaf $k$ and an external respectively internal edge $e$ are picked at random, and b) the edge adjacent to $k$ is taken away (leaving behind a branch point of degree $2$).}
\label{Fig:01a}
\end{figure}

\begin{figure}
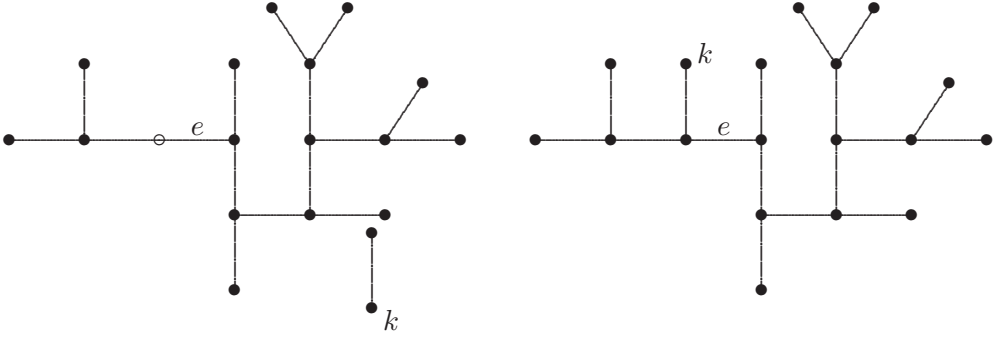

\centerline{
\beginpicture
	\setcoordinatesystem units <.5cm,.5cm>
	\setplotarea x from -2.5 to 12, y from -3.5 to 6
	\plot 4 0 8 0 /
	\plot 7.65 -2.5 7.65 -0.5 /
	\plot 4 0 4 4 /
	\plot 2 2 -2 2 /
	\plot 2 2 4 2 /
	\plot 0 2 0 4 /
	\plot 6 0 6 4 /
	\plot 6 2 10 2 /
	\plot 6 4 7 5.5 /
	\plot 8 2 9 3.5 /
	\plot 6 4 5 5.5 /
	\plot 4 0 4 -2 /
	\put{$\bullet$} [lC] at 3.8 -2
	\put{$\bullet$} [lC] at 4.8 5.5
	\put{$\bullet$} [lC] at 7.8 2
	\put{$\bullet$} [lC] at 9.8 2
	\put{$\bullet$} [lC] at 8.8 3.5
	\put{$\bullet$} [lC] at 6.8 5.5
	\put{$\bullet$} [lC] at 5.8 2
	\put{$\bullet$} [lC] at 5.8 4
	\put{$\bullet$} [lC] at -.2 4
	\put{$\circ$} [lC] at 1.8 2
	\put{$\bullet$} [lC] at -.2 2
	\put{$\bullet$} [lC] at -2.2 2
	\put{$\bullet$} [lC] at 3.8 2
	\put{$\bullet$} [lC] at 3.8 4
	\put{$\bullet$} [lC] at 3.8 0
	\put{$\bullet$} [lC] at 5.8 0
	\put{$\bullet$} [lC] at 7.8 0
	\put{$\bullet$} [lC] at 7.45 -0.5
	\put{$\bullet$} [lC] at 7.45 -2.5
	\put{$e$} [IC] at 3 2.3
	\put{{$k$}} [IC] at 8.15 -2.8
	\plot 18 0 22 0 /
	\plot 18 0 18 4 /
	\plot 16 2 12 2 /
	\plot 16 2 18 2 /
	\plot 14 2 14 4 /
	\plot 20 0 20 4 /
	\plot 20 2 24 2 /
	\plot 20 4 21 5.5 /
	\plot 22 2 23 3.5 /
	\plot 20 4 19 5.5 /
	\plot 18 0 18 -2 /
	\plot 16 2 16 4 /
	\put{$\bullet$} [lC] at 15.8 4
	\put{$k$} [IC] at 16.5 4.3
	\put{$\bullet$} [lC] at 17.8 -2
	\put{$\bullet$} [lC] at 18.8 5.5
	\put{$\bullet$} [lC] at 21.8 2
	\put{$\bullet$} [lC] at 23.8 2
	\put{$\bullet$} [lC] at 22.8 3.5
	\put{$\bullet$} [lC] at 20.8 5.5
	\put{$\bullet$} [lC] at 19.8 2
	\put{$\bullet$} [lC] at 19.8 4
	\put{$\bullet$} [lC] at 13.8 4
	\put{$\bullet$} [lC] at 15.8 2
	\put{$\bullet$} [lC] at 13.8 2
	\put{$\bullet$} [lC] at 11.8 2
	\put{$\bullet$} [lC] at 17.8 2
	\put{$\bullet$} [lC] at 17.8 4
	\put{$\bullet$} [lC] at 17.8 0
	\put{$\bullet$} [lC] at 19.8 0
	\put{$\bullet$} [lC] at 21.8 0
	\put{$e$} [IC] at 17 2.3
\endpicture
}
\caption{c) the two edges containing the branch point of degree $2$ are identified while the edge $e$ gets opened, and d) the free edge gets shuffled there and reattached.}
\label{Fig:01b}
\end{figure}

One of the main goals of this paper is to construct the diffusion limit of the $\alpha$-Ford tree as the number of leaves goes to infinity, and
to provide analytic characterizations. In the case $\alpha=\frac{1}{2}$ the existence of such a diffusion limit was conjectured by David Aldous in a seminar held at the Field Institute in 1999 and had been listed on his open problem list since. Only recently such a Aldous diffusion was constructed in two independent and different approaches
(\cite{FormanPalRizzoloWinkel2016, FormanPalRizzoloWinkel2018a,FormanPalRizzoloWinkel2018b,FormanPalRizzoloWinkel2018c} versus \cite{LoehrMytnikWinter}). We will here follow the approach of \cite{LoehrMytnikWinter} which relies on the notion of algebraic measure trees and the sample shape convergence,
and generalize their construction to all $\alpha\in[0,1]$.


Consider the operator $\Omega_\alpha$ acting on test functions of so-called \emph{sample shape polynomials}
\begin{equation}
\label{e:004}
   \Phi^{m,\Ft}(\chi):=\int\mu^{\otimes m}(\mathrm{d}\underline{u})\,{\bf 1}_\Ft(\Fs_{(T,c)}(\underline{u})),
\end{equation}
with $m\in\mathbb{N}$, $\chi=(T,c,\mu)\in\mathbb{T}_2$ and $\mathfrak{t}\in\mathfrak{C}_m$,
as follows:
\begin{equation}
	\Omega_\alpha\Phi^{m,\Ft}(\chi):=\int_T\mu^{\otimes m}(\mathrm{d}\underline{u})\widetilde{\Omega}_\alpha^m
{\bf 1}_\Ft(\Fs_{(T,c)}(\underline{u})),
\end{equation}
where ${\bf 1}_\Ft$ plays the role of the test function for $\widetilde{\Omega}_\alpha^m$, which denotes the generator of the $\alpha$-Ford Markov chain on the space of $m$-cladograms.

We state here our first main result. To do so, we identify as before an $N$-cladogram with an element of $\T_2^N$ by forgetting the leaf labels and adding the uniform distribution on the leaves. That is, in what follows the $\alpha$-Ford chain is a $\T_2^N$-valued Markov chain.

\begin{theorem}[The well-posed martingale problem]\label{t:martprob}
Let $\alpha\in[0,1]$ and $P_0$ be a probability measure on $\T_2^\mathrm{cont}$. For each $N\in\N$, let $X_0^N\in\T_2^N$ and assume that $X_0\rightarrow \chi$, where $\chi$ is distributed according to $P_0$. Then the $\alpha$-Ford chain $X^{N,\alpha}$ starting in $X_0^N$ converges weakly in Skorokhod path space w.r.t.\ the sample shape convergence to a $\T_2^\mathrm{cont}$-valued Feller process $X^\alpha$ with continuous paths.

Furthermore, $X^\alpha$ is the unique $\T_2^\mathrm{cont}$-valued Markov process $(X_t)_{t\geq0}$ such that $P_0$ is the distribution of $X_0$, and for all $\Phi\in\CD(\Omega_\alpha)$, the process $M:=(M_t)_{t\geq0}$ given by
\begin{equation}
	M_t:=\Phi(X_t)-\Phi(X_0)-\int_0^t\Omega_\alpha\Phi(X_s)\mathrm{d}s
\end{equation}
is a martingale.
\end{theorem}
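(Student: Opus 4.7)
The plan is to adapt the strategy of \cite{LoehrMytnikWinter} (developed there for the Aldous case $\alpha=\tfrac12$) to general $\alpha\in[0,1]$, the key algebraic observation being that $\Omega_\alpha$ leaves the finite-dimensional subspace $V_m:=\mathrm{span}\{\Phi^{m,\mathfrak{s}}:\mathfrak{s}\in\mathfrak{C}_m\}$ invariant for every $m\in\N$. Indeed, $\widetilde{\Omega}_\alpha^m\mathbf{1}_{\mathfrak{t}}$ is a linear combination of indicators $\mathbf{1}_{\mathfrak{s}}$ on $\mathfrak{C}_m$, so
\begin{equation*}
	\Omega_\alpha\Phi^{m,\mathfrak{t}}=\sum_{\mathfrak{s}\in\mathfrak{C}_m}(\widetilde{\Omega}_\alpha^m\mathbf{1}_{\mathfrak{t}})(\mathfrak{s})\,\Phi^{m,\mathfrak{s}}.
\end{equation*}
Since the polynomials $\{\Phi^{m,\mathfrak{t}}\}_{m,\mathfrak{t}}$ are convergence-determining on $\mathbb{T}_2$ by the very definition of the sample shape convergence (Definition~\ref{Def:002}), this closed structure drives both existence and uniqueness.

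For existence I first establish tightness of $(X^{N,\alpha})_N$ in the Skorokhod space of $\mathbb{T}_2$-valued c\`adl\`ag paths, which comes essentially for free from compactness of $\mathbb{T}_2$ (\cite{LoehrWinter}) via an Aldous-type estimate on the family $(\Phi^{m,\mathfrak{t}})$. Identification of limit points rests on proving that the discrete generator $\widetilde{\Omega}_\alpha^N$ of $X^{N,\alpha}$, applied to $\Phi^{m,\mathfrak{t}}$, converges uniformly on $\mathbb{T}_2^N$ to $\Omega_\alpha\Phi^{m,\mathfrak{t}}$: a single $(k,e)$-move only changes the sampled $m$-shape $\mathfrak{s}_{(T,c)}(u_1,\dots,u_m)$ when the moved leaf appears among the $m$ samples (an event of $\mu^{\otimes m}$-probability $O(m/N)$), and balancing this against the $O(N^2)$ total jump rate reduces the combinatorics to a finite sum over $\mathfrak{s}\in\mathfrak{C}_m$ that reproduces exactly the $m$-cladogram generator $\widetilde{\Omega}_\alpha^m$, with remainder $O(1/N)$. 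Any subsequential limit therefore solves the $(\Omega_\alpha,\mathcal{D}(\Omega_\alpha))$-martingale problem.

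For uniqueness, the invariance of $V_m$ transforms the martingale problem, for each fixed $m$, into a closed linear ODE on $\R^{\mathfrak{C}_m}$ for the vector $(\mathbb{E}[\Phi^{m,\mathfrak{s}}(X_t)])_{\mathfrak{s}\in\mathfrak{C}_m}$, whose unique solution can be written explicitly via the Feynman--Kac duality~(\ref{e:001}) in terms of the backward $\alpha$-Ford chain $Y^{m,\alpha}$ and the potential $\beta_\alpha^m$. This pins down all one-dimensional marginals of $X$ and, via the Markov property, all finite-dimensional distributions. Continuity of paths follows because each jump of $X^{N,\alpha}$ affects any given $\Phi^{m,\mathfrak{t}}$ by $O(m^2/N)$, so jump sizes vanish uniformly in the limit; confinement to $\mathbb{T}_2^{\mathrm{cont}}$ is obtained from the fact that the atom masses $1/N$ of $\mu^{N,\alpha}_t$ disappear in the limit. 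The Feller property is then read off from the continuous dependence of the above ODE on its initial data, combined with compactness of $\mathbb{T}_2^{\mathrm{cont}}$.

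The main obstacle will be the generator convergence $\widetilde{\Omega}_\alpha^N\Phi^{m,\mathfrak{t}}\to\Omega_\alpha\Phi^{m,\mathfrak{t}}$: one has to carry out the combinatorial bookkeeping for the $\alpha$-Ford jumps at the level of the sub-shape induced by an $m$-sample, distinguishing the $(1-\alpha)$- and $\alpha$-weights assigned to external versus internal edges of the \emph{sampled} sub-tree (not of the ambient $N$-tree). This is where the proof genuinely departs from the symmetric case of \cite{LoehrMytnikWinter}, and where the asymmetry of the chain --- reflected in the non-trivial potential $\beta_\alpha^m$ of~(\ref{e:001}) --- must be carefully handled.
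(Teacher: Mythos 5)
Your proposal follows essentially the same route as the paper: uniform convergence of the approximating generators $\Omega_\alpha^N\Phi^{m,\mathfrak{t}}\to\Omega_\alpha\Phi^{m,\mathfrak{t}}$ on $\mathbb{T}_2^N$ for tightness and identification of limit points, the closed action of $\Omega_\alpha$ on $\mathrm{span}\{\Phi^{m,\mathfrak{s}}\}$ together with the Feynman--Kac duality with the backward chain for uniqueness, and vanishing jump sizes for path continuity and confinement to $\mathbb{T}_2^{\mathrm{cont}}$. The only notable difference is that the paper sidesteps the general-$\alpha$ combinatorial bookkeeping you single out as the main obstacle by using the linear interpolation $\Omega_\alpha=(1-2\alpha)\Omega_{\mathrm{Kin}}+2\alpha\Omega_{\mathrm{Ald}}$, so that the generator convergence only needs to be verified for the Kingman case $\alpha=0$ (the Aldous case $\alpha=\tfrac12$ being known from the literature).
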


We refer to the process from Theorem~\ref{t:martprob} as \emph{$\alpha$-Ford diffusion}, which is justified by the first part of the theorem.
We point out that the $\alpha$-Ford diffusion is dual to the backward \emph{$\alpha$-Ford chain}  through the following Feynman-Kac-duality relation: for all $m\in\N$ and $\Ft\in\FC_m$, the $\alpha$-Ford diffusion $X:=((T_t,c_t,\mu_t))_{t\geq 0}$ with initial law $P_0=\delta_{\chi}$, $\chi\in\mathbb{T}_2^{\mathrm{cont}}$, satisfies
\begin{equation}
\label{e:005}	
    \E_\chi^X\big[\Phi^{m,\Ft}(X_t)\big]
    =
    \E_\Ft^{Y^m}\Big[\Phi^{m,Y_t^m}(\chi)\exp\big(\int_0^t\beta^m_\alpha(Y_s^m)\mathrm{d}s\big)\Big],
\end{equation}
where 
$Y^m:=(Y_t^m)_{t\geq0}$ is the $\alpha$-Ford backward chain on $m$-cladograms started in $Y_0^m=\Ft$ (Proposition~5.3).

In order to provide representations of the sample subtree mass distribution for general $\alpha\in[0,1]$, we extend this martingale problem as follows.
We consider test functions of the following form, called \emph{mass polynomials of degree $3$}: for
$f\colon [0,1]^3 \to \R$ continuous,
\begin{equation}
\label{e:Phi}
   \Phi^{f}(T,c,\mu)
   :=
   \int f\big(\underline{\eta}(c(\underline{u}))\big) \,\mu^{\otimes 3}(\mathrm{d}\underline{u}),
\end{equation}
where $(T,c,\mu)\in \mathbb{T}_2$. One of the main results of \cite{LoehrWinter} is that $\Phi^f \in {\mathcal C}(\mathbb{T}_2)$.

For all $\alpha\in[0,1]$, we extend the domain of the operator $\Omega_\alpha$ to the set of mass polynomials $\Phi^f$ with $f$ twice continuously differentiable on $[0,1]$. We then put
\begin{equation}
\begin{aligned}
\label{e:Kin}
	\Omega_\alpha\Phi^f(\chi)=& \int\mu^{\otimes 3}(\mathrm{d}\underline{u})\left(\sum_{i,j=1}^{3}\eta_i(\delta_{ij}-\eta_j)\partial^2_{ij}f(\underline{\eta}(\underline{u}))+(2-\alpha)\sum_{i=1}^3(1-3\eta_i)\partial_i f(\underline{\eta}(\underline{u}))\right.\\
	& +(2-3\alpha)\sum_{i=1}^3\big(f(e_i)-f(\underline{\eta}(\underline{u}))\big)+\frac{\alpha}{2}\sum_{i\neq j=1}^3 \frac{{\bf 1}_{\eta_i\neq 0}}{\eta_i}\left(f\circ\theta_{i,j}(\underline{\eta}(\underline{u}))-f(\underline{\eta}(\underline{u}))\right)\\
	& +\left.\frac{\alpha}{2}\sum_{i\neq j=1}^3\left({\bf 1}_{\eta_j=0}-{\bf 1}_{\eta_i=0}\right)\partial_if(\underline{\eta}(\underline{u}))\right)
\end{aligned}
\end{equation}
where $\theta_{i,j}\colon\Delta_2\rightarrow\Delta_2$ denotes the migration operator on the two-simplex
\begin{equation}
	\Delta_2:=\{\underline{x}\in[0,1]^3:x_1+x_2+x_3=1\},
\end{equation}
which sends the vector $\underline{\eta}$ to the vector where we subtract $\eta_i$ from the $i$th entry (resulting in the entry zero) and add it to the $j$th entry (resulting in $\eta_i+\eta_j$), and $e_i=(\delta_{ij})_{i=1,2,3}$ is the $i$th unit vector.

Our second main result is the following:
\begin{theorem}[Extended martingale problem for subtree masses]
Let $\alpha\in[0,1]$ and $X=(X_t)_{t\geq0}$ be the $\alpha$-Ford diffusion on $\T_2^\mathrm{cont}$. Then for all mass polynomials $\Phi^f$ with $f\in\CC^3([0,1])$, the process $M^f:=(M_t^f)_{t\geq0}$ given by
\begin{equation}
	M_t^f:=\Phi^f(X_t)-\Phi^f(X_0)-\int_0^t\Omega_\alpha\Phi^f(X_s)\mathrm{d}s
\end{equation}
is a martingale.
\end{theorem}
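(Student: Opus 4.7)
The plan is to lift the Dynkin martingale from the $N$-level $\alpha$-Ford chain $X^{N,\alpha}$ on $\mathbb{T}_2^N$ to the limit diffusion $X^\alpha$, exploiting the weak convergence in Skorokhod path space granted by Theorem~\ref{t:martprob}. Since $X^{N,\alpha}$ is a pure-jump Markov process with bounded rates and $\Phi^f$ is bounded continuous, the process
\[
\Phi^f(X^{N,\alpha}_t) - \Phi^f(X^{N,\alpha}_0) - \int_0^t \mathcal{G}_\alpha^N \Phi^f(X^{N,\alpha}_s)\,\mathrm{d}s
\]
is automatically a martingale, where $\mathcal{G}_\alpha^N$ denotes the generator of the chain on $\mathbb{T}_2^N$. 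The task therefore reduces to identifying the limit of $\mathcal{G}_\alpha^N \Phi^f$ as $N \to \infty$ and justifying passage to the limit inside the martingale identity.

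The core of the argument is a generator-asymptotic calculation. For $\chi = (T,c,\mu) \in \mathbb{T}_2^N$, I rewrite $\Phi^f(\chi)$ as an expectation over three i.i.d.\ samples $u_1,u_2,u_3$ drawn from $\mu$, and split the jumps of $X^{N,\alpha}$ according to whether the displaced leaf $k$ coincides with one of the sample points. The case $k \in \{u_1,u_2,u_3\}$ occurs with probability $3/N$ and, once multiplied by the $O(N)$ jump rate, yields $O(1)$ changes in the vector $\underline{\eta}(c(\underline{u}))$: these contribute the jump terms $(2-3\alpha)\sum_i (f(e_i) - f(\underline{\eta}))$ and $\tfrac{\alpha}{2}\sum_{i\neq j}\eta_i^{-1}(f\circ\theta_{i,j} - f)$ of \eqref{e:Kin}. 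The complementary case $k\notin\{u_1,u_2,u_3\}$ perturbs $\underline{\eta}$ by $O(1/N)$, and a Taylor expansion of $f$ to second order, combined with the $\alpha$-biased choice of external vs.\ internal edges at reinsertion, produces the drift $(2-\alpha)(1-3\eta_i)\partial_i f$ and the Kingman-type diffusion matrix $\eta_i(\delta_{ij}-\eta_j)\partial^2_{ij} f$; the third-order remainder is uniformly $o(1)$ by the $\mathcal{C}^3$ hypothesis. The boundary correction $({\bf 1}_{\eta_j=0}-{\bf 1}_{\eta_i=0})\partial_i f$ arises from moves that create or destroy a singleton subtree at finite $N$, and must be tracked to close the identity exactly.

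To pass to the limit, I will use that $\Phi^f$ is continuous on $\mathbb{T}_2$ by \cite{LoehrWinter}, and that $\Omega_\alpha \Phi^f$ is continuous on $\mathbb{T}_2^{\mathrm{cont}}$: the indicator terms in \eqref{e:Kin} vanish $\mu^{\otimes 3}$-a.s.\ when $\mu$ is non-atomic, while the remaining integrand depends continuously on the sample masses $\underline{\eta}$. Combined with the weak convergence $X^{N,\alpha} \Rightarrow X^\alpha$, the uniform bound on $\mathcal{G}_\alpha^N\Phi^f$, and the uniform convergence of the generator established above, standard weak-limit arguments for Dynkin martingales (in the spirit of those used in \cite{LoehrMytnikWinter} for $\alpha=\tfrac{1}{2}$) transfer the martingale property to $M^f$ under $X^\alpha$.

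The main obstacle is the finite-$N$ generator calculation. The coefficients $(2-\alpha)$, $(2-3\alpha)$, and $\alpha/2$ in \eqref{e:Kin} all originate from the same underlying $\alpha$-Ford edge weights, redistributed across different orders in $1/N$ and across the two regimes above; keeping the bookkeeping straight---particularly tracking how the $\alpha$-bias of the drift in the non-coincident case interacts with the jump contributions from the coincident case to produce the migration-operator terms---is the delicate computation. A secondary subtlety is the careful handling of the boundary indicators ${\bf 1}_{\eta_i=0}$, which are genuinely present at finite $N$ but disappear in the continuum limit on $\mathbb{T}_2^{\mathrm{cont}}$.
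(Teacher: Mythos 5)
Your overall architecture --- Dynkin martingale at level $N$, uniform convergence of the generators on mass polynomials, then transfer of the martingale identity along the weak convergence $X^{N,\alpha}\Rightarrow X^\alpha$ --- is exactly the paper's strategy, and the final limiting step is indeed the standard Ethier--Kurtz argument you invoke. The route diverges at the generator computation: the paper never computes $\Omega_\alpha^N\Phi^f$ for general $\alpha$. It uses the linear interpolation $\Omega_\alpha=(1-2\alpha)\Omega_{\mathrm{Kin}}+2\alpha\Omega_{\mathrm{Ald}}$ (see \eqref{e:interpgen}), cites \cite{LoehrMytnikWinter} for the Aldous case, and only carries out the Kingman case $\alpha=0$, where every chain move is a (leaf, external edge) pair and the bookkeeping collapses considerably. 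A direct computation for general $\alpha$ is possible in principle, but you forgo this simplification and thereby take on the full $\alpha$-dependent combinatorics that the paper deliberately avoids.

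The genuine gap is that the computation you defer as ``the main obstacle'' is the entire content of the theorem, and the mechanism you sketch for it does not match how the terms of \eqref{e:Kin} actually arise. Two concrete problems. First, in the coincident case $k\in\{u_1,u_2,u_3\}$ you argue ``probability $3/N$ times $O(N)$ rate gives $O(1)$''; but the total jump rate is $O(N^2)$, so coincident moves occur at rate $O(N)$ and each produces an $O(1)$ change of $\underline{\eta}$ --- naively an $O(N)$ contribution. What saves the day is an exact near-cancellation between resampling $u_k$ at the removed leaf $x$ and at the newly created leaf $y_e$: the two empirical measures $\tfrac1N\sum_x\delta_x$ and $\tfrac1N\sum_e\delta_{y_e}$ agree except for boundary discrepancies (quantified in the paper via the count ${\bf 1}_{\mathrm{lf}(T)}(z)+{\bf 1}_{\mathrm{lf}(T)}(z')$), and it is precisely this residual that produces the drift and the corner-jump term $(2-3\alpha)\sum_i(f(e_i)-f)$. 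Your sketch shows no awareness of this cancellation, without which the limit does not exist. Second, you attribute the drift $(2-\alpha)\sum_i(1-3\eta_i)\partial_i f$ to the first-order Taylor term of the non-coincident case; in the paper's computation the non-coincident case contributes \emph{only} the Wright--Fisher matrix, because the first-order terms $\eta_i\eta_j(\partial_j-\partial_i)f/\epsilon$ cancel exactly by antisymmetry in $i\neq j$ --- the drift comes from the coincident/resampling terms $A_k$. Until the calculation is actually performed with these cancellations tracked (or reduced to the two known cases via \eqref{e:interpgen}), the proof is not complete; the remaining ingredients (continuity of $\Phi^f$, a.s.\ vanishing of the boundary indicators on $\mathbb{T}_2^{\mathrm{cont}}$, the limit transfer) are fine as you state them.
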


Applying the operator $\Omega_\alpha$ to test functions $f^{\underline{k}}:\Delta_3\to[0,1]$, $\underline{k}=(k_1,k_2,k_3)\in\N^3$, of the form
\begin{equation}
	f^{(\underline{k})}(\underline{\eta})=\eta_1^{k_1}\eta_2^{k_2}\eta_3^{k_3},
\end{equation}
we obtain the following representation of the distribution of the subtree mass vector
$\underline{X}^\alpha_\infty$  of the $\alpha$-Ford algebraic measure tree. Obviously,  $\E[f^{(0,0,0)}(\underline{X}^\alpha_\infty)]=1$ and
\begin{equation}
\E[f^{(1,0,0)}(\underline{X}^\alpha_\infty)]=\E[f^{(0,1,0)}(\underline{X}^\alpha_\infty)]=\E[f^{(0,0,1)}(\underline{X}^\alpha_\infty)]=\frac{1}{3},
\end{equation}
for all $\alpha\in[0,1]$. Moreover, the following recursive relations hold:

\begin{corollary}[Moments of subtree mass distribution of $\alpha$-Ford] For all $\alpha\in[0,1]$ and $\underline{k}\in\mathbb{N}_0^3$,
\begin{equation}
\begin{aligned}
	\E\big[f^{(\underline{k})}(\underline{X}^\alpha_\infty)\big]
 &=
   \frac{1}{(S+3)(S+2-3\alpha)}\Big(\sum_{i=1}^3\mathbf{1}_{\{k_i\not =0\}}(k_i+1)(k_i-\alpha)
   \E\big[f^{(\underline{k}-e_i)}(\underline{X}^\alpha_\infty)\big]
   \\
	&\;\;+(2-3\alpha)\big({\bf 1}\{k_1=k_2=0\}+{\bf 1}\{k_2=k_3=0\}+{\bf 1}\{k_3=k_1=0\}\big)
  \\
	&\;\;\; +\frac{\alpha}{2}\sum_{i=1}^3{\bf 1}_{k_i=0}\sum_{j\neq i=1}^3
    \sum_{l_j=1}^{k_j}{k_j\choose l_j}\E\big[f^{(\underline{k}+(l_j-1)e_i-l_je_j)}(\underline{X}^\alpha_\infty)\big]\Big),
\end{aligned}
\end{equation}
where $S=k_1+k_2+k_3$.

Specifically, if $\alpha=0$, then for all $\underline{k}\in\mathbb{N}_0^3$,
\begin{equation}
\label{e:025}
\begin{aligned}
   \E\big[f^{\underline{k}}\big(\underline{X}_\infty^{0}\big)\big]
   = 4\frac{\prod_{j=1}^3\Gamma(k_j+2)}{\Gamma(S+3)}\sum_{1\leq i_1<i_2\leq 3}\frac{\Gamma(k_{i_1}+k_{i_2}+1)}{\Gamma(k_{i_1}+k_{i_2}+4)}.
\end{aligned}
\end{equation}
\label{Lem:001}
\end{corollary}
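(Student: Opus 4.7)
The plan is to exploit stationarity of the $\alpha$-Ford diffusion $X^\alpha$, whose stationary law is that of the $\alpha$-Ford algebraic measure tree $\chi_\infty^\alpha$. Applying the extended martingale problem of the previous theorem to the mass polynomial $\Phi^{f^{(\underline{k})}}$ and using infinitesimal invariance yields
\[
 0 \;=\; \E\bigl[\Omega_\alpha\Phi^{f^{(\underline{k})}}(\chi_\infty^\alpha)\bigr].
\]
The recursion will then follow by evaluating each of the five pieces of \eqref{e:Kin} at $f=f^{(\underline{k})}$ and collecting coefficients, while the closed form at $\alpha=0$ will be read off the Beta-product representation in Proposition~\ref{P:001}.

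For the generator computation, use $\partial_if^{(\underline{k})}=k_if^{(\underline{k}-e_i)}$, $\partial^2_{ij}f^{(\underline{k})}=k_ik_jf^{(\underline{k}-e_i-e_j)}$, and the basic identity $\eta_if^{(\underline{k}-e_i)}=f^{(\underline{k})}$. The diffusion sum $\sum_{i,j}\eta_i(\delta_{ij}-\eta_j)\partial^2_{ij}f^{(\underline{k})}$ then collapses to $\sum_ik_i(k_i-1)f^{(\underline{k}-e_i)}-S(S-1)f^{(\underline{k})}$, and the drift sum to $(2-\alpha)\bigl(\sum_ik_if^{(\underline{k}-e_i)}-3Sf^{(\underline{k})}\bigr)$. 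Since $f^{(\underline{k})}(e_i)=\mathbf{1}\{k_j=0\text{ for all }j\neq i\}$, the resampling piece yields the indicator sum in the statement together with $-3(2-3\alpha)f^{(\underline{k})}$. The delicate piece is the migration term: $\theta_{i,j}(\underline{\eta})_i=0$ forces $f^{(\underline{k})}\circ\theta_{i,j}\equiv 0$ whenever $k_i\geq 1$, which absorbs the $1/\eta_i$ singularity and contributes $-f^{(\underline{k}-e_i)}$; when $k_i=0$, binomially expanding $(\eta_i+\eta_j)^{k_j}$ and dividing by $\eta_i$ gives $\sum_{l_j=1}^{k_j}\binom{k_j}{l_j}f^{(\underline{k}+(l_j-1)e_i-l_je_j)}$. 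The last line of \eqref{e:Kin} contributes nothing, since almost surely the three subtree masses at the branch point of a $\mu^{\otimes 3}$-sample from a continuum $\alpha$-Ford tree are strictly positive. Integrating against $\mu^{\otimes 3}$ and taking expectations, the coefficient of $\E[f^{(\underline{k})}(\underline{X}_\infty^\alpha)]$ factors as
\[
 -\bigl(S(S-1)+3(2-\alpha)S+3(2-3\alpha)\bigr)=-(S+3)(S+2-3\alpha),
\]
and that of $\E[f^{(\underline{k}-e_i)}(\underline{X}_\infty^\alpha)]$ (for $k_i\geq 1$) factors as $k_i(k_i-1)+(2-\alpha)k_i-\alpha=(k_i+1)(k_i-\alpha)$. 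Rearrangement gives the stated recursion.

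For $\alpha=0$, Proposition~\ref{P:001} represents $\underline{X}_\infty^0$ as the uniform symmetrization of $(B_{1,2}B_{2,2},B_{1,2}(1-B_{2,2}),1-B_{1,2})$ with independent $B_{1,2}\sim\mathrm{Beta}(1,2)$ and $B_{2,2}\sim\mathrm{Beta}(2,2)$. Independence and the Beta integral identity give
\[
 \E\bigl[(B_{1,2}B_{2,2})^{k_1}(B_{1,2}(1-B_{2,2}))^{k_2}(1-B_{1,2})^{k_3}\bigr]
 = 12\,\frac{\Gamma(k_1+k_2+1)\,\Gamma(k_3+2)\,\Gamma(k_1+2)\,\Gamma(k_2+2)}{\Gamma(S+3)\,\Gamma(k_1+k_2+4)}.
\]
The prefactor $\prod_j\Gamma(k_j+2)/\Gamma(S+3)$ is permutation-invariant in the $k_j$. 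Averaging the remaining factor $\Gamma(k_{\sigma(1)}+k_{\sigma(2)}+1)/\Gamma(k_{\sigma(1)}+k_{\sigma(2)}+4)$ over $\sigma\in S_3$ and grouping the permutations with the same fixed value $\sigma(3)$, each unordered pair $\{i_1,i_2\}$ is counted twice, so the overall constant is $\frac{1}{6}\cdot 12\cdot 2=4$ and \eqref{e:025} follows. The main technical obstacle in the whole argument is the migration term: one must verify carefully that the $1/\eta_i$ singularity is exactly annihilated by the vanishing factor $\eta_i$ sitting inside $f^{(\underline{k})}\circ\theta_{i,j}$ when $k_i\geq 1$, and that the surviving mass-transfer contributions for $k_i=0$ recombine into the combinatorial sum $\sum_{l_j=1}^{k_j}\binom{k_j}{l_j}$ appearing in the statement.
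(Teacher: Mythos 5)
Your proposal is correct and follows essentially the same route as the paper: stationarity of the $\alpha$-Ford diffusion plus an explicit evaluation of $\Omega_\alpha\Phi^{f^{(\underline{k})}}$ term by term (with the same factorizations $S(S-1)+3(2-\alpha)S+3(2-3\alpha)=(S+3)(S+2-3\alpha)$ and $k_i(k_i-1)+(2-\alpha)k_i-\alpha=(k_i+1)(k_i-\alpha)$, and the same binomial expansion of the migration term), and for $\alpha=0$ the identical Beta-moment computation that the paper carries out inside the proof of Proposition~\ref{P:001}. Two cosmetic remarks: your displayed formula $\partial^2_{ij}f^{(\underline{k})}=k_ik_jf^{(\underline{k}-e_i-e_j)}$ needs the $\delta_{ij}$ correction $(k_i-\delta_{ij})k_j$ on the diagonal (your collapsed expression already uses the corrected version), and since the paper proves Proposition~\ref{P:001} by checking that the Beta moments satisfy this very recursion, one should present the $\alpha=0$ formula as obtained simultaneously with that verification rather than as a consequence of an independently established Proposition~\ref{P:001}.
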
\smallskip

\noindent{\bf Outline. }
The rest of the paper is organized as follows. In Section~\ref{S:framework} we introduce our state space of algebraic measure trees and recall its most important properties from \cite{LoehrWinter}. In Section~\ref{S:Static} we consider the static $\alpha$-Ford model and show that the algebraic measure tree obtained from the genealogy of a Kingman coalescent equals the $(\alpha=0)$-Ford model. In Section~\ref{S:evolving}
we then consider the $\alpha$-Ford chain on cladograms with a fixed number of leaves and state the Feynman-Kac duality relation to the
time reversed chain. In Section~\ref{S:diffusion limit} we construct the diffusion limit of the $\alpha$-Ford chain as the number of leaves goes to infinity as a solution of a well-posed martingale problem. In Section~\ref{S:applications} we extend this martingale problem to test functions which evaluate the sample subtree mass distribution and derive our recursive relations for the moments of the sample subtree mass distribution. We will get a more explicit representation for the sample subtree mass distribution in case of the Kingman algebraic tree.

\section{The state space: algebraic measure trees}
\label{S:framework}
In this section we introduce the state space. For that we rely on the framework of
algebraic measure trees, which was introduced in \cite{LoehrWinter}. All proofs can be found there.
In order to focus on the algebraic tree structure rather than the metric, the definition of a tree is based on axioms on the map which sends any three points to their branch point.

\begin{defi}[Algebraic tree]
An \emph{algebraic tree} is a non-empty set $T$ together with a symmetric map $c\colon T^3\rightarrow T$ satisfying the following:
\begin{enumerate}
	\item[(2pc)] For all $x_1,x_2\in T$, $c(x_1,x_2,x_2)=x_2$.
	\item[(3pc)] For all $x_1,x_2,x_3\in T$, $c(x_1,x_2,c(x_1,x_2,x_3))=c(x_1,x_2,x_3)$.
	\item[(4pc)] For all $x_1,x_2,x_3,x_4\in t$,
	\begin{equation}
		c(x_1,x_2,x_3)\in\{c(x_1,x_2,x_4),c(x_1,x_3,x_4),c(x_2,x_3,x_4)\}.
	\end{equation}
\end{enumerate}
We call $c$ the \emph{branch point map}. A \emph{tree isomorphism} between two algebraic trees $(T_i,c_i)$, $i=1,2$, is a bijective map $\phi\colon T_1\rightarrow T_2$ such that for all $x_1,x_2,x_3\in T_1$,
\begin{equation}
	\phi(c_1(x_1,x_2,x_3))=c_2(\phi(x_1),\phi(x_2),\phi(x_3)).
\end{equation}
\label{Def:001}
\end{defi}

For each point $x\in T$, we define an equivalence relation $\sim_x$ on $T\setminus\{x\}$ such that for all $y,z\in T\setminus\{x\}$, $y\sim_xz$ if and only if $c(x,y,z)\neq x$. For $y\in T\setminus\{x\}$, we denote by
\begin{equation}
\label{e:components}
	\CS_x(y):=\{z\in T\setminus\{x\}:z\sim_x y\}
\end{equation}
the equivalence class of $y$ for this equivalence relation $\sim_x$. We also call $\CS_x(y)$ the \emph{component} of $T\setminus\{x\}$ containing $y$. We introduce the following definitions to describe the tree structure of an algebraic tree $(T,c)$:
\begin{itemize}
	\item a \emph{subtree} of $T$ is a set $S\subseteq T$ such that $c(S^3)=S$,
	\item the \emph{degree} of $x\in T$ is the number of components of $T\setminus\{x\}$, and we write $\mathrm{deg}(x):=\#\{\CS_x(y):y\in T\setminus\{x\}\}$,
	\item a \emph{leaf} is a point $u\in T$ such that $\mathrm{deg}(u)=1$, and we write $\mathrm{lf}(T)$ for the set of leaves,
	\item a \emph{branch point} is a point $v\in T$ such that $\mathrm{deg}(v)\geq3$, or equivalently such that $v=c(x_1,x_2,x_3)$ for some $x_1,x_2,x_3\in T\setminus\{v\}$, and we denote by $\mathrm{br}(T)$ the set of branch points,
	\item for $x,y\in T$, we define the interval $[x,y]$ as
	\begin{equation}
		[x,y]:=\{z\in T:c(x,y,z)=z\},
	\end{equation}
	\item and we say that $\{x,y\}$ is an \emph{edge} if $x\neq y$ and $[x,y]=\{x,y\}$.
\end{itemize}

There is a natural Hausdorff topology on a given algebraic tree, namely the topology generated by the set of all components $\CS_x(y)$ with $x\neq y$, $x,y\in T$. We say that an algebraic tree $(T,c)$ is \emph{order separable} if it is separable w.r.t.\ this topology and has at most countably many edges. We further equip order separable algebraic trees with a probability measure on the Borel $\sigma$-algebra $\CB(T,c)$, which allows to sample leaves from the tree.

\begin{defi}[Algebraic measure trees]
A (separable) \emph{algebraic measure tree} $(T,c,\mu)$ is an order separable algebraic tree $(T,c)$ together with a probability measure $\mu$ on $\CB(T,c)$.

We say that two algebraic measure trees $(T_i,c_i,\mu_i)$, $i=1,2$ are \emph{equivalent} if there exist subtrees $S_i\subseteq T_i$ with $\mu_i(S_i)=1$, $i=1,2$ and a measure preserving tree isomorphism $\phi$ from $S_1$ onto $S_2$, i.e.\ $c_2(\phi(x),\phi(y),\phi(z))=\phi(c_1(x,y,z))$ for all $x,y,z\in S_1$, and $\mu_1\circ\phi^{-1}=\mu_2$. We define
\begin{equation}
	\T:=\text{set of equivalence classes of algebraic measure trees}.
\end{equation}
With an abuse of notation, we will write $\chi=(T,c,\mu)$ for the algberaic tree as well as the equivalence class.
\end{defi}

A first way to equip $\T$ with a topology is by associating an algebraic measure tree with a metric measure tree in $\M$ the space of \emph{metric measure spaces}, and define the convergence of algebraic measure trees in $\T$ as the Gromov-weak convergence (compare, for example, \cite{GrevenPfaffelhuberWinter2009}) of these associated metric measure trees. We first need to define the metric measure tree associated to an algebraic measure tree.

\begin{defi}[branch point distribution]
For an algebraic measure tree $\chi=(T,c,\mu)$, the \emph{branch point distribution} on $T$ is defined as
\begin{equation}
	\nu_{(T,c,\mu)}:=\mu^{\otimes 3}\circ c^{-1},
\end{equation}
and we associate $\chi$ with the metric measure tree $(T,r_\mu,\mu)\in\M$, where we put for $x,y\in T$,
\begin{equation}
	r_\mu(x,y):=\nu_\chi([x,y])-\frac{1}{2}\nu_\chi(\{x\})-\frac{1}{2}\nu_\chi(\{y\}).
\end{equation}
\end{defi}
The choice of the metric $r_\mu$ can be understood as follows: two points are close if the mass branching off the line segment connecting them is small rather than if the length of this line segment is small. We then say that a sequence of algebraic measure trees converges in the \emph{branch point distribution distance Gromov-weak topology} if the associated (through $r_\mu$) sequence of metric measure trees converges Gromov-weakly.

Because cladograms are by definition binary, it is enough for the purpose of the present paper to consider the
subspace of $\mathbb{T}$ consisting of binary trees. More precisely, we consider the subspace of binary algebraic measure trees with the property that the measure has atoms only (if at all) on the leaves on the tree.
\begin{equation}
	\T_2=\{(T,c,\mu)\in\T:\mathrm{deg}(v)\leq3~\forall v\in T, \mathrm{at}(\mu)\subseteq\mathrm{lf}(T)\},
\end{equation}
where we write $\mathrm{at}(\mu)$ for the set of atoms of $\mu$.
Under this extra condition, the notion of Gromov-weak convergence with respect to $r_\nu$ is equivalent to a more combinatorial notion of convergence.
In contrast to the Gromov-weak convergence which relies on sample distance matrices, this combinatorial notion make use of sample shapes. To introduce the latter, we first extend our previous definition of cladograms as follows.
\begin{defi}[$m$-cladogram]
For $m\in\N$, an \emph{$m$-labelled cladogram} is a binary, finite tree $C=(C,c)$ consisting only of leaves and branch points together with a surjective labelling map $\zeta:\{1,...,m\}\rightarrow\mathrm{lf}(C)$. An \emph{$m$-cladogram} $(C,c,\zeta)$ is an $m$-labelled cladogram such that $\zeta$ is also injective.

We call two $m$-labelled cladograms $(C_1,c_1,\zeta_1)$ and $(C_2,c_2,\zeta_2)$ \emph{isomorphic} if there exists a tree isomorphism $\phi$ from $(C_1,c_1)$ onto $(C_2,c_2)$ such that $\zeta_2=\phi\circ\zeta_1$. We then write
\begin{equation}
	\overline{\FC}_m:=\{\text{isomorphism classes of }m\text{-labelled cladograms}\}
\end{equation}
and
\begin{equation}
	\FC_m:=\{(C,c,\zeta)\in\overline{\FC}_m:\zeta\text{ injective}\}.
\end{equation}
\end{defi}

Note that an $m$-cladogram has exactly $m$ leaves (and $m-2$ branch points). An $m$-labelled cladogram can have less than $m$ leaves (and $m-2$ branch points) if a leaf has multiple labels.

We next define the shape function, which allows to associate $m$ ordered distinct leaves with a unique $m$-cladogram.
\begin{defi}[Shape function]
For a binary algebraic tree $(T,c)$, $m\in\N$, and $u_1,...,u_m\in T\setminus\mathrm{br}(T)$, there exists a unique (up to isomorphism) $m$-labelled cladogram
\begin{equation}
	\Fs_{(T,c)}(u_1,...,u_m)=(C,c_C,\zeta)
\end{equation}
with $\mathrm{lf}(C)=\{u_1,..,u_m\}$ and $\zeta(i)=u_i$, such that the identity on $\mathrm{lf}(C)$ extends to a tree homomorphism $\pi$ from $C$ onto $c(\{u_1,...,u_m\}^3)$, i.e.\ for all $i,j,k=1,...,m$,
\begin{equation}
	\pi(c_C(u_i,u_j,u_k))=c(u_i,u_j,u_k).
\end{equation}
We will refer to $\Fs_{(T,c)}(u_1,...,u_m)\in\overline{\FC}_m$ as the \emph{shape} of $u_1,...,u_m$ in $(T,c)$ (compare with Figure~\ref{f:shapefunction}).
\label{Def:003}
\end{defi}

\begin{figure}[t]
\[
\xymatrix@=1pc{
&u_1\ar@{-}[d]&&{\bullet}&&{\bullet}   &   &&&\\
{\bullet}\ar@{-}[r]&{\bullet}\ar@{-}[dr]&&&{\bullet}\ar@{-}[ul]\ar@{-}[ur]&   &&&&&   u_1\ar@{-}[dr]&&u_3\ar@{-}[d]&\\
&&{\bullet}\ar@{-}[r]&u_3\ar@{-}[r]&{\bullet}\ar@{-}[r]\ar@{-}[u]&u_4   &&&&&   &{\bullet}\ar@{-}[r]&{\bullet}\ar@{-}[r]&u_4\\
&u_2\ar@{-}[ur]&&&&   &&&&&   u_2\ar@{-}[ur]&&&}\]
\caption{A tree $T$ and the shape $\Fs_{(T,c)}(u_1,u_2,u_3,u_4)$. The cladogram is not isomorphic to the subtree $c(\{u_1,u_2,u_3,u_4\}^3)$ because $u_3\in [u_1,u_4]$.}
\label{f:shapefunction}
\end{figure}
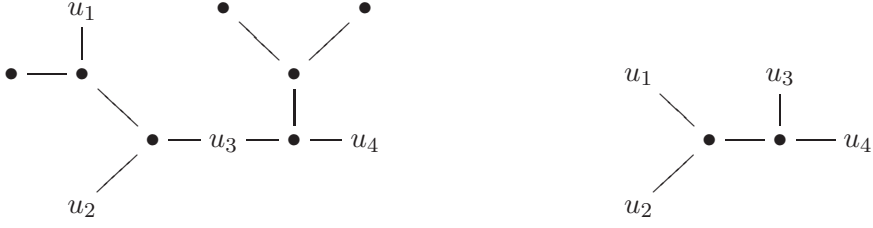

We are now in a position to define the sample shape convergence.
\begin{defi}[Sample shape convergence]
A sequence $(\chi_N)_{N\in\N}$ of binary algebraic measure trees $(T_N,c_N,\mu_N)$ \emph{converges in sample shape} to the algebraic measure tree $(T,c,\mu)$ if and only if for $U_1^N,U_2^N,...$ i.i.d.\ of law $\mu_N$, and $U_1,U_2,...$ i.i.d.\ of law $\mu$, for all $m\in\N$,
\begin{equation}
	\Fs_{(T_N,c_N)}(U_1^N,...,U_m^N)\underset{N\rightarrow\infty}{\Longrightarrow}\Fs_{(T,c)}(U_1,...,U_m).
\end{equation}
\label{Def:002}
\end{defi}

Since for any $m\in\N$ the space of $m$-cladograms is finite, we have the following equivalence.
\begin{proposition}
Let $(\chi_N=(T_N,c_N,\mu_N))_{N\in\N}$ and $\chi=(T,c,\mu)$ be in $\T_2$. Then $(\chi_N)_{N\in\mathbb{N}}$ converges to $\chi$ w.r.t.\ the sample shape convergence if and only if for all $m\in\N$ and $\Ft\in\overline{\FC}_m$,
\begin{equation}
	\mu_N^{\otimes m}\big(\big\{(u_1,...,u_m):\Fs_{(T_N,c_N)}(\underline{u})=\Ft\big\}\big)\underset{N\rightarrow\infty}{\longrightarrow}\mu^{\otimes m}\big(\big\{(u_1,...,u_m):\Fs_{(T,c)}(\underline{u})=\Ft\}\big).
\end{equation}
\label{p:conveq}
\end{proposition}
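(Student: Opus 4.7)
The plan is to reduce the statement to the standard fact that, on a finite (or more generally countable discrete) space, weak convergence of probability measures is equivalent to convergence of the mass of each singleton.

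First I would spell out the connection between the two formulations. By definition of iid sampling, the distribution of the random cladogram $\mathfrak{s}_{(T_N,c_N)}(U_1^N,\ldots,U_m^N)$ is the pushforward of $\mu_N^{\otimes m}$ under the shape map $\underline{u}\mapsto \mathfrak{s}_{(T_N,c_N)}(\underline{u})$. Hence for every $\mathfrak{t}\in\overline{\mathfrak{C}}_m$,
\begin{equation*}
\mathbb{P}\bigl(\mathfrak{s}_{(T_N,c_N)}(U_1^N,\ldots,U_m^N)=\mathfrak{t}\bigr)=\mu_N^{\otimes m}\bigl(\{\underline{u}:\mathfrak{s}_{(T_N,c_N)}(\underline{u})=\mathfrak{t}\}\bigr),
\end{equation*}
and similarly for the limit tree $\chi$. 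So the two conditions in the proposition are literally equating weak convergence in the space $\overline{\mathfrak{C}}_m$ with pointwise convergence of the singleton probabilities.

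Next I would observe that for each $m\in\mathbb{N}$ the set $\overline{\mathfrak{C}}_m$ is finite: there are only finitely many binary tree shapes with at most $m$ leaves, and only finitely many surjective maps from $\{1,\ldots,m\}$ to their leaf sets. Equipping this finite set with the discrete topology (implicit in Definition~\ref{Def:002}, since this is the discrete convergence used throughout the paper), every subset is clopen, so by the Portmanteau theorem weak convergence of probability measures on $\overline{\mathfrak{C}}_m$ is equivalent to convergence of the mass of each singleton $\{\mathfrak{t}\}$.

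Combining these two observations yields the equivalence. No obstacle is expected; the whole content of the proposition is the translation from the language of sampled random cladograms to the language of pushforward measures of $\mu_N^{\otimes m}$, together with the elementary fact that weak convergence on a finite discrete space reduces to pointwise convergence of the atom masses. I would simply write these two remarks down and quantify over all $m\in\mathbb{N}$ and $\mathfrak{t}\in\overline{\mathfrak{C}}_m$ to conclude.
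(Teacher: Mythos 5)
Your proposal is correct and follows exactly the route the paper intends: the paper derives the proposition as an immediate consequence of the finiteness of $\overline{\FC}_m$ (deferring details to \cite{LoehrWinter}), namely that weak convergence on a finite discrete space is equivalent to convergence of the singleton masses, after identifying the law of the sampled shape with the pushforward of $\mu_N^{\otimes m}$ under the shape map. Nothing is missing.
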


In what follows will consider $\alpha$-Ford trees with $N$ leaves as random algebraic measure tree which belong to the following subspace:
\begin{equation}
\label{e:002}
	\T_2^N:=\big\{(T,c,\mu)\in\T_2:\#\mathrm{lf}(T)=N\text{ and }\mu=\tfrac{1}{N}\sum_{u\in\mathrm{lf}(T)}\delta_u\big\},
\end{equation}
and let then $N$ tend to infinity.
The following proposition claims that the limit points are elements in the following closed subspace:
\begin{equation}
	\T_2^\mathrm{cont}:=\left\{(T,c,\mu)\in\T_2:\mathrm{at}(\mu)=\emptyset\right\}.
\end{equation}

\begin{proposition}[Approximations with $\T_2^\mathrm{cont}$]
Let $\chi\in\T_2$. Then $\chi\in\T_2^\mathrm{cont}$ if and only if there exists for each $N\in\N$ a $\chi_N\in\T_2^N$ such that $\chi_N\rightarrow\chi$ in one and thus all of the equivalent notions of convergence on $\T_2$ given above.
\label{p:approxT2}
\end{proposition}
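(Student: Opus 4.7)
The proof plan splits into the two implications.

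For $(\Leftarrow)$, suppose $\chi_N = (T_N, c_N, \mu_N) \in \T_2^N$ converges to $\chi$ in sample shape. Since $\mu_N$ is uniform on $N$ leaves, every atom of $\mu_N$ has mass $1/N$. Apply Proposition~\ref{p:conveq} with $m=2$ and the test shape $\Ft_{=} \in \overline{\FC}_2 \setminus \FC_2$, namely the degenerate $2$-labelled cladogram consisting of a single leaf carrying both labels. On the one hand,
$$\mu_N^{\otimes 2}\big(\{\Fs_{(T_N, c_N)}(\cdot) = \Ft_{=}\}\big) = \PP(U_1^N = U_2^N) = \tfrac{1}{N} \to 0,$$
while the right hand side limit equals $\mu^{\otimes 2}(\{u_1 = u_2\}) = \sum_{x \in \mathrm{at}(\mu)} \mu(\{x\})^2$. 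Vanishing of this sum forces $\mathrm{at}(\mu) = \emptyset$, hence $\chi \in \T_2^\mathrm{cont}$.

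For $(\Rightarrow)$, the strategy is to construct $\chi_N$ by sampling. Let $(U_j)_{j \ge 1}$ be i.i.d.\ $\mu$-distributed on an auxiliary probability space. Because $\mu$ is non-atomic and the set of branch points in an order-separable binary algebraic tree is countable (each branch point is $c(d_1,d_2,d_3)$ for a triple from any fixed countable dense set), the $U_j$ are almost surely pairwise distinct and lie in $T \setminus \mathrm{br}(T)$. Put
$$\chi_N := \Big(C_N,\, c_{C_N},\, \tfrac{1}{N} \sum_{j=1}^N \delta_{U_j}\Big), \qquad C_N := \Fs_{(T,c)}(U_1, \ldots, U_N),$$
which is almost surely an element of $\T_2^N$.

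To verify $\chi_N \to \chi$ via Proposition~\ref{p:conveq}, fix $m \in \N$ and $\Ft \in \overline{\FC}_m$. The crucial observation is a subsampling consistency of shapes, namely $\Fs_{(C_N, c_{C_N})}(U_{i_1}, \ldots, U_{i_m}) = \Fs_{(T,c)}(U_{i_1}, \ldots, U_{i_m})$ for every $(i_1, \ldots, i_m) \in \{1,\ldots,N\}^m$, which is immediate from the uniqueness clause of Definition~\ref{Def:003} together with the fact that $c_{C_N}$ agrees with $c$ on $\{U_1, \ldots, U_N\}$. Consequently,
$$\mu_N^{\otimes m}\big(\{\Fs_{(C_N, c_{C_N})}(\cdot) = \Ft\}\big) = \frac{1}{N^m} \sum_{i_1, \ldots, i_m = 1}^N \mathbf{1}\big\{\Fs_{(T,c)}(U_{i_1}, \ldots, U_{i_m}) = \Ft\big\}.$$
Tuples with repeated indices contribute $O(1/N) \to 0$, while the distinct-index part is a classical $U$-statistic converging almost surely to $\mu^{\otimes m}(\{\Fs_{(T,c)}(\cdot) = \Ft\})$ by the strong law.

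Since $\bigcup_m \overline{\FC}_m$ is countable, a countable intersection yields a single full-measure event on which the sample shape convergence holds for all $(m,\Ft)$ simultaneously; picking any realization from this event provides the required deterministic approximating sequence. The step I expect to be most delicate is the clean verification of the subsampling consistency of $\Fs$ together with the measure-theoretic fact that $\mu(\mathrm{br}(T))=0$, though both reduce to elementary observations once the framework of \cite{LoehrWinter} is set up; the remainder is a routine $U$-statistic argument.
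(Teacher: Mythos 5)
Your proposal is correct in substance. Note first that the paper does not actually prove Proposition~\ref{p:approxT2}; it is quoted from \cite{LoehrWinter} (the section opens with ``All proofs can be found there''), so your argument is a self-contained reconstruction rather than a deviation from a proof in the text. Both directions are sound: the $(\Leftarrow)$ direction via Proposition~\ref{p:conveq} with $m=2$ works (and can be made convention-free by testing against the unique nondegenerate $\Ft\in\FC_2$, whose $\mu_N^{\otimes2}$-mass is $1-\tfrac1N\to1$, forcing $\mu^{\otimes2}\{u_1\neq u_2\}=1$, in case one worries whether the one-point $2$-labelled cladogram is admitted in $\overline{\FC}_2$); the $(\Rightarrow)$ direction by i.i.d.\ sampling plus the strong law for $U$-statistics and a countable intersection over $(m,\Ft)$ is the natural route. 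The one place where your justification is imprecise is the subsampling consistency of shapes: it is \emph{not} true that $c_{C_N}$ agrees with $c$ on $\{U_1,\dots,U_N\}$ --- Figure~\ref{f:shapefunction} shows exactly that $C_N$ need not be isomorphic to $c(\{U_1,\dots,U_N\}^3)$, and $c_{C_N}$ is related to $c$ only through the homomorphism $\pi$ of Definition~\ref{Def:003}. The claim you need is still true, but the correct argument is to compose homomorphisms: if $\pi_N\colon C_N\to c(\{U_i\}^3)$ and $\pi_m\colon \Fs_{(C_N,c_{C_N})}(U_{i_1},\dots,U_{i_m})\to c_{C_N}(\{U_{i_j}\}^3)$ are the canonical homomorphisms, then $\pi_N\circ\pi_m$ is a tree homomorphism onto $c(\{U_{i_j}\}^3)$ extending the identity on the subsample, and the uniqueness clause then identifies $\Fs_{(C_N,c_{C_N})}(U_{i_1},\dots,U_{i_m})$ with $\Fs_{(T,c)}(U_{i_1},\dots,U_{i_m})$. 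With that repair (and the observation you already make, that $\mathrm{br}(T)\subseteq c(D^3)$ for a countable dense $D$ is countable hence $\mu$-null), the proof is complete.
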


\begin{proposition}[Compactness and metrizability]
$\T_2$ is a compact, metrizable space. Moreover, $\T_2^\mathrm{cont}$ is a closed subspace of $\T_2$, and thus compact as well.
\label{p:compact}
\end{proposition}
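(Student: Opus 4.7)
The plan is to realize $\T_2$ as a subset of a countable product of finite-dimensional simplices via the sample shape statistics, and show this realization is a homeomorphism onto a closed subset. For each $m\in\N$ and $\chi=(T,c,\mu)\in\T_2$, set
\begin{equation*}
P_m(\chi)(\Ft) := \mu^{\otimes m}\big(\{\underline{u}\in T^m : \Fs_{(T,c)}(\underline{u}) = \Ft\}\big),\quad \Ft\in\overline{\FC}_m,
\end{equation*}
and $\Psi(\chi) := (P_m(\chi))_{m\in\N}$. Since each $\overline{\FC}_m$ is finite, $\CP(\overline{\FC}_m)$ is a compact metric simplex, so the product $\CQ := \prod_{m\in\N}\CP(\overline{\FC}_m)$ is compact and metrizable (countable product of compact metric spaces). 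Proposition~\ref{p:conveq} says that sample shape convergence on $\T_2$ coincides with the topology pulled back by $\Psi$, so the task reduces to showing that $\Psi$ is injective on $\T_2$ and that $\Psi(\T_2)$ is closed in $\CQ$.

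Injectivity relies on reconstructing the equivalence class of $(T,c,\mu)$ from the family $(P_m(\chi))_m$. Atoms of $\mu$ are detected by the probabilities of degenerate labelled shapes in $\overline{\FC}_m\setminus\FC_m$ (the event that two samples coincide), and the combinatorial structure of any finite $\mu$-sample is encoded exactly by the corresponding shape distribution; because trees in $\T_2$ are binary and atoms sit only at leaves, the finite samples leave no hidden structure. For closedness of $\Psi(\T_2)$, suppose $(Q_m)_m$ is a limit of $(\Psi(\chi_n))_n$ in $\CQ$. The natural consistency relations, namely that projecting an $m$-sample to its first $m-1$ coordinates yields $P_{m-1}(\chi)$, together with symmetry under permutations of labels, are closed conditions in the product topology and are therefore inherited by $(Q_m)_m$. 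A De Finetti/Aldous-style reconstruction then produces a binary algebraic measure tree $\chi\in\T_2$ with $P_m(\chi)=Q_m$ for every $m$; the binarity and the ``atoms only at leaves'' properties survive the limit because each is the vanishing of continuous coordinates of $\Psi$. This reconstruction step is the main obstacle, but it follows the standard route for exchangeable random combinatorial structures. Together, injectivity and closedness of the image make $\Psi$ a homeomorphism of $\T_2$ onto a compact metrizable subset of $\CQ$, proving compactness and metrizability of $\T_2$.

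For the second claim, $\chi\in\T_2$ lies in $\T_2^\mathrm{cont}$ if and only if $\mu$ is non-atomic, and since atoms of $\mu$ can occur only at leaves, this is equivalent to
\begin{equation*}
P_m(\chi)(\Ft)=0\quad\text{for all }m\in\N\text{ and all }\Ft\in\overline{\FC}_m\setminus\FC_m.
\end{equation*}
Each such condition is the vanishing of a continuous coordinate of $\Psi$, hence is preserved under convergence in $\T_2$. Consequently $\T_2^\mathrm{cont}$ is a closed subspace of $\T_2$ and is therefore itself compact.
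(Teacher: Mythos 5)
Your high-level strategy --- realize $\T_2$ via the map $\Psi(\chi)=(P_m(\chi))_{m\in\N}$ as a subset of the compact metrizable product $\prod_{m\in\N}\CP(\overline{\FC}_m)$ and show it is a homeomorphism onto a closed subset --- is a legitimate way to organize the statement, and your treatment of the second claim is essentially complete and correct: non-atomicity of $\mu$ is equivalent to the vanishing of $P_m(\chi)$ on $\overline{\FC}_m\setminus\FC_m$ (already $m=3$ suffices), and by Proposition~\ref{p:conveq} these are continuous coordinates, so $\T_2^\mathrm{cont}$ is closed. However, for the first claim the two steps that carry all of the mathematical content are asserted rather than proved. (i) \emph{Injectivity of $\Psi$} is a reconstruction theorem: one must show that two algebraic measure trees in $\T_2$ with identical sample shape distributions for every $m$ are equivalent in the sense of the paper (measure-preserving tree isomorphism between subtrees of full measure). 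The remark that ``finite samples leave no hidden structure'' because the trees are binary with atoms only at leaves is precisely the point that needs an argument; note that injectivity is needed even for metrizability of the pullback topology, not only for compactness. (ii) \emph{Closedness of the image}: given a limit family $(Q_m)_m$, which is exchangeable and projection-consistent, you must actually construct a single (deterministic) tree $\chi\in\T_2$ with $P_m(\chi)=Q_m$ for all $m$ --- build the point set as a limit of the nested finite cladograms, define the branch point map, produce the Borel measure $\mu$ on an order-separable algebraic tree, and verify binarity and that atoms sit only at leaves. You flag this yourself as ``the main obstacle,'' and the appeal to a ``De Finetti/Aldous-style reconstruction'' is not a proof; moreover you cannot invoke the paper's statement that sampling-consistent families converge in $\T_2$, since that statement is itself derived from the compactness you are trying to prove.

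For comparison: the present paper does not prove this proposition at all --- it is quoted from \cite{LoehrWinter}, where compactness is obtained by a quite different and more concrete route, namely a coding of binary algebraic measure trees by (sub-)triangulations of the circle, i.e.\ by certain closed subsets of the disc. Compactness of $\T_2$ is then inherited from compactness of the space of closed subsets of the disc in the Hausdorff metric together with continuity and surjectivity of the coding map, which avoids having to establish an abstract representation theorem for consistent families of shape distributions from scratch. If you want to complete your route, the reconstruction arguments in \cite{LoehrWinter} (the algebraic analogue of the Gromov reconstruction theorem and the convergence theory for sampling-consistent families) are exactly the missing ingredients.
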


\section{Static tree models}
\label{S:Static}

Relying on the notion of sample shape convergence in $\T_2$, we can construct random algebraic measure trees by letting the number of leaves go to infinity in some finite tree models satisfying a sampling consistency property. This is what we use here to construct the family of continuum $\alpha$-Ford algebraic measure trees, together with the fact that an $m$-cladogram can be seen as an element of $\T_2^m$.

Recall the construction of the $\alpha$-Ford model on cladograms from Section \ref{S:introduction}. Recall also that we get the Comb model when $\alpha=1$, the Uniform model when $\alpha=\frac{1}{2}$, and the Yule model when $\alpha=0$, which also corresponds to the Kingman tree (see Section \ref{S:appendix} for a proof):

\begin{proposition}
For all $m\in\N$, the random cladogram obtained from the Kingman $m$-coalescent has the distribution of the $(\alpha=0)$-Ford model on $m$-cladograms.
\end{proposition}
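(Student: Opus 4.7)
The plan is to prove the proposition by induction on $m$, identifying both models as the same exchangeable Markov branching model on rooted binary trees. The base case $m = 2$ is trivial. Both distributions are exchangeable under leaf relabelings---for the Kingman coalescent by the uniform pair-merger construction, for the Ford tree by the final uniform label permutation---and both have a natural rooted-tree structure: the Kingman coalescent is rooted at the MRCA, while the $\alpha = 0$ Ford tree is rooted at the initial ancestor of its sequential construction. Forgetting the root by smoothing the resulting degree-$2$ vertex recovers the unrooted cladogram, so it suffices to match the rooted-tree distributions.

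Each rooted tree decomposes at the root into two sub-trees on a partition of $\{1, \dots, m\}$ into non-empty blocks of sizes $K$ and $m - K$. The joint distribution of (split sizes, label partition, sub-tree shapes) is built from three pieces. First, the Markov branching property: conditional on the label partition, the two sub-trees are independent realizations of the same model on their respective label sets. For the Ford model this is immediate from the sequential construction, while for the Kingman coalescent it follows from sampling consistency and the strong Markov property of the coalescent restricted to sub-samples. Second, conditional on the split sizes $K$, the partition of labels is uniform on the $\binom{m}{K}$ choices, by exchangeability of both models. Third, the split size $K$ is uniformly distributed on $\{1, \dots, m - 1\}$ for both models. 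For the Ford $\alpha = 0$ tree this is the classical Yule--Harding split law, derivable from the sequential construction via a two-color P\'olya urn started from $(1, 1)$: the current sub-tree containing a fixed child of the root grows by one each time a new leaf is inserted into one of its external edges, with probability proportional to its current size. For the Kingman coalescent, the same uniform law at the MRCA can be verified by a direct combinatorial enumeration of coalescent sequences producing a given root split, or equivalently via the exchangeable partition / Chinese restaurant representation of the coalescent's block sizes.

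With these three ingredients in hand, the inductive step closes: by the induction hypothesis applied to the two sub-trees on $K < m$ and $m - K < m$ leaves, their distributions coincide across the two models, and hence so do the rooted---and therefore the unrooted---$m$-cladogram distributions. The main technical obstacle is the verification of the uniform split-size law for the Kingman coalescent at the MRCA, which is the essential combinatorial input; the remaining steps are routine or classical.
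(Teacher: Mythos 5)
Your route is genuinely different from the paper's. The paper inducts from $m-1$ to $m$ by coupling the \emph{first} coalescence of the Kingman $m$-coalescent with the \emph{last} leaf insertion of the $(\alpha=0)$-Ford construction: after the first merger the induced process on the resulting $m-1$ blocks is again a Kingman coalescent, so its cladogram is $(\alpha=0)$-Ford by induction, and the first merger, being independent of that induced process, amounts to attaching a cherry at a uniformly chosen leaf --- which is exactly the $m$-th Ford insertion step when external edges carry weight $1$ and internal edges weight $0$. That argument never roots the tree and needs neither the root-split law nor any branching property. You instead propose to identify both laws as the Markov branching model with uniform root split and to induct via the decomposition at the MRCA; this is a legitimate classical route, but it carries an extra burden that your sketch does not discharge.

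The gap is in your first ingredient on the Kingman side. Sampling consistency concerns a \emph{fixed, deterministic} subset $B\subseteq\{1,\dots,m\}$: the genealogy induced on $B$ is a Kingman $|B|$-coalescent. It says nothing about the genealogies of the two \emph{random} blocks $A,A^c$ of the root partition, \emph{conditionally on the event} that $\{A,A^c\}$ is the root partition; that conditioning could in principle bias the two subtree shapes and correlate them, and ``the strong Markov property of the coalescent restricted to sub-samples'' does not address this (the subtrees are built from the pre-MRCA history, not the post-MRCA future). This conditional independence is essentially equivalent to the proposition itself, so asserting it without proof is close to circular --- and it, not the uniform split-size law you single out, is the hard half of your plan. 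It can be supplied by the explicit formula $\PP(t)=2^{m-1}\big(m!\,\prod_{v}(\ell_v-1)\big)^{-1}$ for the probability that the coalescent yields a given labelled rooted shape $t$ (product over internal vertices $v$, with $\ell_v$ the number of leaves below $v$), obtained by counting the equally likely merger orders compatible with $t$; the product form delivers the uniform split and the conditional independence simultaneously, after which your induction closes. You should also add a line verifying that the rooted Yule tree, after smoothing its degree-$2$ root, really has the law of the paper's (unrooted) $(\alpha=0)$-Ford cladogram, since the paper's construction carries no distinguished root.
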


Putting weight $1/m$ on each leaf and forgetting the labelling of the leaves, we can also define a random algebraic measure tree in $\T_2^m$. In order to distinguish $m$-cladograms and elements of $\T_2^m$, we use most of the time the letter $N$ to describe the number of leaves of algebraic measure trees.

From this construction, we thus have that, for each $\alpha\in[0,1]$ and $N\in\N$, the $\alpha$-Ford model defines a random algebraic measure tree in $\T_2^N$, that we denote by $\tau^\alpha_N$. The sequence $(\tau^\alpha_N)_N$ takes values in the compact space $\T_2$. We can therefore show that the sequence is convergent by proving that each convergent subsequence converges to the same limit. The uniqueness of limit points  results from the consistency property of the $\alpha$-Ford models:

\begin{definition}[Sampling consistency]
Consider a family $(T_N,c_N)_N$ of random algebraic trees such that $(T_N,c_N)$ has $N$ leaves. For $m\leq N$, let $U_1,...,U_m$ be a uniform random choice of $m$ distinct leaves of $\Ft_N$. We say that the family $(\Ft_N)_N$ is \emph{sampling consistent} if for all $1\leq m\leq N<\infty$, the algebraic tree associated with the shape
\begin{equation}
	\Fs_{(T_N,c_N)}(U_1,...,U_m)
\end{equation}
has the same distribution as $(T_m,c_m)$.
\end{definition}

It follows immediately from our notion of convergence that a sampling consistent family of random binary algebraic trees together with the uniform distribution $\mu_N$ on $\mathrm{lf}(T_N)$ converges weakly to a binary algebraic measure tree.

It has been shown in \cite{Ford2005} through a combinatorial argument that the $\alpha$-Ford models are \emph{deletion stable}. That is, the cladogram obtained by removing the leaf with label $m$ from the $\alpha$-Ford tree with $m$ leaves has the distribution of the $\alpha$-Ford tree with $m-1$ leaves. Furthermore, the last step of the construction of the $\alpha$-Ford cladogram assures that we have exchangeability, i.e.\ the resulting distribution on cladograms is symmetric under permutation of leaf labels. This assures that the family $(\tau_N)_N$ is sampling consistent and thus converges weakly in $\T_2$.

\begin{definition}[$\alpha$-Ford algebraic measure tree]
The \emph{$\alpha$-Ford algebraic measure tree} is the unique limit in $\T_2$ of the sequence $(\tau^\alpha_N)_N$, where $\tau^\alpha_N$ is the random algebraic measure tree in $\T_2^N$ obtained from the random $N$-cladogram distributed according to the $\alpha$-Ford model. We also give the following names for some specific values of $\alpha$:
\begin{itemize}
	\item $\alpha=0$: the Kingman algebraic measure tree,
	\item $\alpha=\frac{1}{2}$: the algebraic measure Brownian CRT,
	\item $\alpha=1$: the Comb algebraic measure tree.
\end{itemize}
\end{definition}

\begin{figure}
\setlength{\unitlength}{.28cm}
\begin{picture}(0,5)
\put(-20,0){\line(1,0){21}}
\put(3,0){\line(1,0){17}}
\put(1.1,0){$\boldsymbol{\ldots}$}
\put(-18,0){\line(0,1){2}}
\put(-16,0){\line(0,1){2}}
\put(-14,0){\line(0,1){2}}
\put(-12,0){\line(0,1){2}}
\put(-10,0){\line(0,1){2}}
\put(-8,0){\line(0,1){2}}
\put(-6,0){\line(0,1){2}}
\put(-4,0){\line(0,1){2}}
\put(-2,0){\line(0,1){2}}
\put(0,0){\line(0,1){2}}
\put(18,0){\line(0,1){2}}
\put(4,0){\line(0,1){2}}
\put(6,0){\line(0,1){2}}
\put(8,0){\line(0,1){2}}
\put(10,0){\line(0,1){2}}
\put(12,0){\line(0,1){2}}
\put(14,0){\line(0,1){2}}
\put(16,0){\line(0,1){2}}
\put(0,2){\circle*{0.3}}
\put(-18,2){\circle*{0.3}}
\put(-16,2){\circle*{0.3}}
\put(-14,2){\circle*{0.3}}
\put(-12,2){\circle*{0.3}}
\put(-10,2){\circle*{0.3}}
\put(-8,2){\circle*{0.3}}
\put(-6,2){\circle*{0.3}}
\put(-4,2){\circle*{0.3}}
\put(-2,2){\circle*{0.3}}
\put(18,2){\circle*{0.3}}
\put(4,2){\circle*{0.3}}
\put(6,2){\circle*{0.3}}
\put(8,2){\circle*{0.3}}
\put(10,2){\circle*{0.3}}
\put(12,2){\circle*{0.3}}
\put(14,2){\circle*{0.3}}
\put(16,2){\circle*{0.3}}
\put(0,0){\circle*{0.3}}
\put(-20,0){\circle*{0.3}}
\put(-18,0){\circle*{0.3}}
\put(-16,0){\circle*{0.3}}
\put(-14,0){\circle*{0.3}}
\put(-12,0){\circle*{0.3}}
\put(-10,0){\circle*{0.3}}
\put(-8,0){\circle*{0.3}}
\put(-6,0){\circle*{0.3}}
\put(-4,0){\circle*{0.3}}
\put(-2,0){\circle*{0.3}}
\put(20,0){\circle*{0.3}}
\put(4,0){\circle*{0.3}}
\put(6,0){\circle*{0.3}}
\put(8,0){\circle*{0.3}}
\put(10,0){\circle*{0.3}}
\put(12,0){\circle*{0.3}}
\put(14,0){\circle*{0.3}}
\put(16,0){\circle*{0.3}}
\put(18,0){\circle*{0.3}}
\put(0.2,2.2){$\tfrac1N$}
\put(-17.8,2.2){$\tfrac1N$}
\put(-15.8,2.2){$\tfrac1N$}
\put(-13.8,2.2){$\tfrac1N$}
\put(-11.8,2.2){$\tfrac1N$}
\put(-9.8,2.2){$\tfrac1N$}
\put(-7.8,2.2){$\tfrac1N$}
\put(-5.8,2.2){$\tfrac1N$}
\put(-3.8,2.2){$\tfrac1N$}
\put(-1.8,2.2){$\tfrac1N$}
\put(18.2,2.2){$\tfrac1N$}
\put(4.2,2.2){$\tfrac1N$}
\put(6.2,2.2){$\tfrac1N$}
\put(8.2,2.2){$\tfrac1N$}
\put(10.2,2.2){$\tfrac1N$}
\put(12.2,2.2){$\tfrac1N$}
\put(14.2,2.2){$\tfrac1N$}
\put(16.2,2.2){$\tfrac1N$}
\put(-20.5,0.6){$\tfrac1N$}
\put(20.2,0.2){$\tfrac1N$}
\end{picture}
\caption{The comb tree with $N$ leaves. }
\label{Fig:02}
\end{figure}
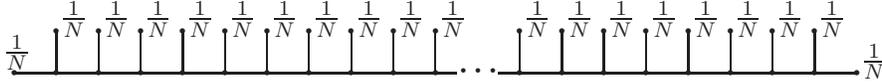

Note that as a consequence of Proposition \ref{p:approxT2}, the $\alpha$-Ford algebraic measure tree belongs to $\T_2^\mathrm{cont}$. From its definition, we know the distribution of the shape sampled by $m$ points of the $\alpha$-Ford algebraic measure tree for all $\alpha\in[0,1]$: its distribution is the $\alpha$-Ford model on $m$-cladograms.

In order to use subtree masses statistics for testing hypotheses, we need to characterize the distribution of the vector of subtree masses for the $\alpha$-Ford trees. More precisely, recall the definition of the components $\CS_v(u)$, $u,v\in T$, from \eqref{e:components} and for $\underline{u}=(u_1,u_2,u_3)\in T^3$, denote by $\underline{\eta}(\underline{u})$ the vector of the $\mu$-masses of the components of $T\setminus\{c(\underline{u})\}$, that is
\begin{equation}
	\underline{\eta}(\underline{u})=\left(\eta_i(\underline{u})\right)_{i=1,2,3}=\left(\mu(\CS_{c(\underline{u})}(u_i))\right)_{i=1,2,3}.
\end{equation}

For the case $\alpha=\frac{1}{2}$, it is known that the sample subtree masses of the algebraic measure Brownian CRT is Dirichlet distributed (see \cite[Proposition~1]{Aldous1994}, or \cite[Proposition~5.2]{LoehrMytnikWinter} for a proof in the case of algebraic measure trees using a combinatorial argument). That is, writing $\PP_\mathrm{CRT}$ for the law of the algebraic measure Brownian CRT, we have, for all $f\colon \Delta_2\rightarrow\R$ continuous bounded,
\begin{equation}
	\E_\mathrm{CRT}\left[\int_{T^3}\mu^{\otimes 3}(\mathrm{d}\underline{u}) f\left(\underline{\eta}_{(T,c,\mu)}(\underline{u})\right)\right]=\int_{\Delta_2}f(\underline{x})\mathrm{Dir}\left(\frac{1}{2},\frac{1}{2},\frac{1}{2}\right)(\mathrm{d}\underline{x}),
\end{equation}
where $\mathrm{Dir}(\frac{1}{2},\frac{1}{2},\frac{1}{2})$ is the Dirichlet distribution.

In the case $\alpha=1$, we can also write the distribution in an explicit way.

\begin{proposition}[comb tree]
Let $\mathrm{Beta}(2,2)$ be the beta distribution on $[0,1]$ and $\PP_\mathrm{Comb}$ the law of the Comb algebraic measure tree. Then for all $f\colon \Delta_2\rightarrow\R$ continuous bounded,
\begin{align*}
	\E_\mathrm{Comb}\left[\int_{T^3}\mu^{\otimes 3}(\mathrm{d}\underline{u}) f\left(\underline{\eta}_{(T,c,\mu)}(\underline{u})\right)\right]=\frac{1}{6}\sum_{\pi\in\CS_3}\int_{[0,1]}f\circ\pi^*(x,1-x,0)\mathrm{Beta}(2,2)(\mathrm{d}x),
\end{align*}
where $\CS_3$ is the set of permutations of $\{1,2,3\}$, and for $\pi\in\CS_3$, $\pi^*\colon\Delta_2\rightarrow\Delta_2$ is the induced map $\pi^*(\underline{x})=(x_{\pi(1)},x_{\pi(2)},x_{\pi(3)})$.
\end{proposition}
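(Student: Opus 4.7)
The plan is to reduce the identity to an explicit computation on the finite comb trees $\tau_N^1 \in \T_2^N$ and then pass to the limit. The reduction relies on two facts from earlier in the paper: first, $\tau^1$ (the Comb algebraic measure tree) is the limit in $\T_2$ of the deterministic sequence $\tau_N^1$, established above through the sampling consistency of the $\alpha$-Ford construction; second, the polynomial $\Phi^f(\chi) := \int \mu^{\otimes 3}(\mathrm{d}\underline{u})\, f(\underline{\eta}(\underline{u}))$ is continuous on $\T_2$, as recalled following \eqref{e:Phi} (from \cite{LoehrWinter}). Together these yield $\E_{\mathrm{Comb}}[\Phi^f] = \lim_{N \to \infty} \Phi^f(\tau_N^1)$, so the task reduces to the explicit evaluation of this limit.

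For the finite step, I would order the $N$ leaves of $\tau_N^1$ along the spine as $1, \ldots, N$, with endpoint leaves at positions $1$ and $N$. Given an ordered triple of \emph{distinct} leaves whose spine positions are $i_1 < i_2 < i_3$ after reordering, the middle one is necessarily a hanging leaf (since $1 < i_2 < N$), the branch point $c(u_1, u_2, u_3)$ is its foot on the spine, and the three subtree components carry $\mu_N$-masses $(i_2-1)/N$ (left), $1/N$ (the isolated middle leaf), $(N-i_2)/N$ (right), assigned to the leftmost, middle and rightmost of $(u_1, u_2, u_3)$ respectively. Thus $\underline{\eta}(u_1, u_2, u_3)$ is the permutation of $((i_2-1)/N, 1/N, (N-i_2)/N)$ dictated by the spine-ranking of $(u_1, u_2, u_3)$, and triples with a repeated leaf contribute $O(1/N)$ by $\|f\|_\infty < \infty$. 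Summing over the $6$ rankings $\pi \in \CS_3$ and over the $(i_2-1)(N-i_2)$ choices of $(i_1, i_3)$ with $i_2$ fixed gives
\begin{equation*}
    \Phi^f(\tau_N^1) = \frac{1}{N^3}\sum_{\pi \in \CS_3}\sum_{i_2 = 2}^{N-1}(i_2 - 1)(N - i_2)\, f\bigl(\pi^*\bigl(\tfrac{i_2 - 1}{N}, \tfrac{1}{N}, \tfrac{N - i_2}{N}\bigr)\bigr) + O\bigl(\tfrac{1}{N}\bigr).
\end{equation*}

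Passing to the limit using the uniform continuity of $f$ on the compact simplex $\Delta_2$, the Riemann sum converges to $\sum_{\pi \in \CS_3} \int_0^1 x(1-x)\, f(\pi^*(x, 0, 1-x))\, \mathrm{d}x$; recognizing $6x(1-x)$ as the $\mathrm{Beta}(2,2)$ density and noting that the $\CS_3$-orbits of $(x, 0, 1-x)$ and $(x, 1-x, 0)$ agree as sets then yields the claimed identity. The sole subtle point is that the middle leaf's component carries positive mass $1/N$ at each finite $N$ but collapses to the zero entry of $(x, 1-x, 0)$ in the limit; this is consistent with the continuum picture in which the branch point of three sampled leaves sits at the middle one's spine foot, where $\mu$ has no atom. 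Beyond this observation, the combinatorics and the Riemann-sum step are routine.
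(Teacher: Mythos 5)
Your argument is correct, and the computation is the natural one: the paper itself gives no proof of this proposition, deferring in the introduction to \cite[Proposition~1.6.8]{Winter} with the remark that the statement ``can be easily read off from the associated comb tree,'' which is precisely what you do. The two pillars you invoke --- convergence $\tau_N^1\to\tau^1$ in $\T_2$ from sampling consistency, and continuity of $\Phi^f$ on $\T_2$ from \cite{LoehrWinter} --- are both available in the paper, the identification of the branch point of a distinct triple with the spine foot of the middle leaf and of the three component masses as $\bigl(\tfrac{i_2-1}{N},\tfrac1N,\tfrac{N-i_2}{N}\bigr)$ is exact, and the Riemann-sum limit together with the observation that $6x(1-x)$ is the $\mathrm{Beta}(2,2)$ density and that the symmetrized sums over $(x,0,1-x)$ and $(x,1-x,0)$ coincide closes the argument.
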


The arguments used for $\alpha=\frac{1}{2}$ and $\alpha=1$ do not apply to any other $\alpha$, and in particular not to the Kingman case. However, we show in Section \ref{S:applications} that for each $\alpha\in[0,1]$, the distribution of the subtree masses for the $\alpha$-Ford algebraic measure tree is invariant for a Wright-Fisher diffusion with a mutation term and catastrophies. This allows to derive an explicit representation for the Kingman algebraic measure tree.

\section{The $\alpha$-Ford chain on fixed size cladograms}
\label{S:evolving}

In this section, we introduce the $\alpha$-Ford chain as a Markov chain on the space of cladograms with a fixed number of leaves. For all $\alpha\in[0,1]$, its generator is a linear interpolation between the generator of the Aldous chain ($\alpha=\frac{1}{2}$) and the generator of what we call the Kingman chain ($\alpha=0$). The $\alpha$-Ford chain is dual to the Markov chain with reversed transition rates through a Feynman-Kac duality relation. Finally we show that the $\alpha$-Ford model is the stationary distribution of the $\alpha$-Ford chain.

For $\alpha\in[0,1]$, recall from Section \ref{S:introduction} that the $\alpha$-Ford (forward) chain is a Markov chain on the space $\FC_m$ of $m$-cladograms, defined by the following transition rate: for a pair $(k,e)$ consisting of a leaf (label) and an external (resp.\ internal) edge not adjacent to $k$ at rate $1-\alpha$ (resp. $\alpha$), the Markov chain jumps from its current state $\Ft$ to $\Ft^{(k,e)}$, which is obtained as follows (see Figures~\ref{Fig:01a} and \ref{Fig:01b}):
\begin{itemize}
	\item erase the edge (including the incident vertices) which connects $k$ to the subtree spanned by all leaves but $k$.
	\item split the remaining subtree at the edge $e$ into two pieces.
	\item reintroduce the above edge (including $k$ and the branch point) at the split point.
\end{itemize}

The transition rates of the $\alpha$-Ford (forward) chain are thus, for $\Ft,\Ft'\in\FC_m$,
\begin{equation*}
	q_\alpha^m(\Ft,\Ft')=\sum_{k\in\mathrm{lf}(\Ft)}\sum_{e\in\mathrm{edge}(\Ft_{\wedge k})}\left((1-\alpha){\bf 1}_{\mathrm{ext{\text -}edge}(\Ft_{\wedge k})}(e)+\alpha{\bf 1}_{\mathrm{int{\text -}edge}(\Ft_{\wedge k})}(e)\right){\bf 1}_{\Ft'}(\Ft^{(k,e)}),
\end{equation*}
where $\mathrm{ext{\text -}edge}(\Ft)$ (resp.\ $\mathrm{int{\text -}edge}(\Ft)$) is the set of external edges of $\Ft$ (resp.\ internal edges), and we introduced the notation
\begin{equation}
	\Ft_{\wedge k}\in\FC_{m-1}
\end{equation}
to denote the $(m-1)$-cladogram obtained from $\Ft$ by deleting the leaf with label $k$ (and relabelling the labels $j>k$ to $j-1$). The generator $\widetilde{\Omega}_\alpha^m$ of the $\alpha$-Ford chain acts on all functions $\phi:\FC_m\rightarrow\R$ as follows:
\begin{align*}
	\widetilde{\Omega}_\alpha^m\phi(\Ft)=(1-\alpha)&\sum_{k\in\mathrm{lf}(\Ft)}\sum_{e\in\mathrm{ext{\text -}edge}(\Ft_{\wedge k})}\left(\phi(\Ft^{(k,e)})-\phi(\Ft)\right)\\
	+\alpha &\sum_{k\in\mathrm{lf}(\Ft)}\sum_{e\in\mathrm{int{\text -}edge}(\Ft_{\wedge k})}\left(\phi(\Ft^{(k,e)})-\phi(\Ft)\right).
\end{align*}
We give the following names for specific values of $\alpha$:
\begin{itemize}
	\item $\alpha=0$: the Kingman (forward) chain with generator $\widetilde{\Omega}_\mathrm{Kin}^m$,
	\item $\alpha=\frac{1}{2}$: the Aldous chain $\widetilde{\Omega}_\mathrm{Ald}^m$,
	\item $\alpha=1$: the Comb (forward) chain $\widetilde{\Omega}_\mathrm{Comb}^m$.
\end{itemize}
Note that we have, for all $\alpha,\beta,\gamma\in[0,1]$ with $\beta\not =\gamma$,
\begin{equation}
\label{e:030}
   \widetilde{\Omega}_\alpha^m
 =
    \frac{\gamma-\alpha}{\gamma-\beta}\widetilde{\Omega}_{\beta}^m+\frac{\alpha-\beta}{\gamma-\beta}\widetilde{\Omega}_\gamma^m,
\end{equation}
which implies that if some results including analytical representations and duality relations hold for the $\beta$-Ford and $\gamma$-Ford chain for two particular choices of $\beta,\gamma\in[0,1]$ with $\beta\not =\gamma$ then the corresponding results can be obtained for
the $\alpha$-Ford chain for all choices of $\alpha\in[0,1]$. We therefore need to understand the model only for two different values of $\alpha\in[0,1]$. For $\alpha=\frac{1}{2}$ the model is the Aldous chain which was studied in detail in \cite{LoehrMytnikWinter}.
In this paper we choose  $\alpha=0$ as the second value and exploit
the following relation:
\begin{equation}
\label{e:interpgenm}
	\widetilde{\Omega}_\alpha^m=(1-2\alpha)\widetilde{\Omega}_\mathrm{Kin}^m+2\alpha\widetilde{\Omega}_\mathrm{Ald}^m.
\end{equation}

We are also interested in the backward Markov chain, i.e.\ the Markov chain with reversed transition rates
\begin{equation}
	q^m_{\alpha^\downarrow}(\Ft',\Ft):=q^m_\alpha(\Ft,\Ft').
\end{equation}
To describe this chain through its generator, notice that after modifying the cladogram $\Ft$ according to the forward chain, we can go back by a similar move: picking a leaf and inserting it to a given edge. But in this case, the rates for a pair $(k,e)$ will be different depending on the position of the leaf (not of the edge) in the tree. Consider for example the Kingman chain with $\alpha=0$. Since we choose any leaf and put it to an external edge in the forward chain, the edge we pick for the reverse move can be any edge (not only external edge), but the leaf we pick has to be a \emph{cherry}. We call \emph{pair of cherries} a pair of leaves which are both adjacent to the same internal branch point, and we call \emph{cherry} a leaf that belongs to a pair of cherries (Fig. \ref{f:cherries}). We write
\begin{equation}
	\mathrm{ch{\text-}lf}(\Ft)\subset\mathrm{lf}(\Ft)
\end{equation}
for the set of cherries.

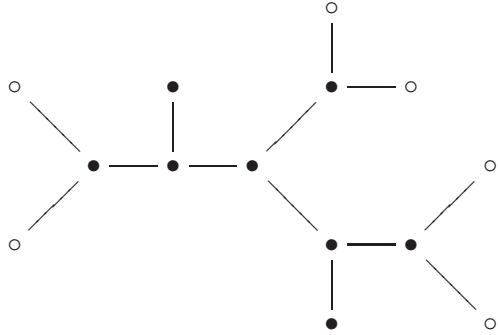
\begin{figure}[t]
\[\xymatrix@C=1.5pc@R=0.5pc{
&&&&{\circ}\ar@{-}[dd]&&\\
&&&&&&\\
{\circ}\ar@{-}[ddr]&&{\bullet}\ar@{-}[dd]&&{\bullet}\ar@{-}[ddl]&{\circ}\ar@{-}[l]&\\
&&&&&&\\
&{\bullet}\ar@{-}[r]&{\bullet}\ar@{-}[r]&{\bullet}\ar@{-}[ddr]&&&{\circ}\\
&&&&&&\\
{\circ}\ar@{-}[uur]&&&&{\bullet}\ar@{-}[r]\ar@{-}[dd]&{\bullet}\ar@{-}[uur]&\\
&&&&&&\\
&&&&{\bullet}&&{\circ}\ar@{-}[uul]}\]
\caption{An $8$-cladogram with 6 cherries.}\label{f:cherries}
\end{figure}

Thus, the generator $\widetilde{\Omega}_{\alpha^\downarrow}^m$ of the \emph{$\alpha$-Ford backward chain} acts on functions $\phi:\FC_m\rightarrow\R$ as follows:
\begin{equation}
\label{e:026}
\begin{aligned}
	\widetilde{\Omega}_{\alpha^\downarrow}^m\phi(\Ft):=(1-\alpha)&\sum_{k\in\mathrm{ch{\text -}lf}(\Ft)}\sum_{e\in\mathrm{edge}(\Ft_{\wedge k})}\left(\phi(\Ft^{(k,e)})-\phi(\Ft)\right)\\
	+\alpha&\sum_{k\notin\mathrm{ch{\text -}lf}(\Ft)}\sum_{e\in\mathrm{edge}(\Ft_{\wedge k})}\left(\phi(\Ft^{(k,e)})-\phi(\Ft)\right).
\end{aligned}
\end{equation}
Note that the $\alpha$-Ford chain is symmetric if and only if $\alpha=\frac{1}{2}$ (Aldous chain).

We have the following relation between $\widetilde{\Omega}_\alpha^m$ and $\widetilde{\Omega}_{\alpha^\downarrow}^m$:

\begin{proposition}[Feynman-Kac duality]
Let $X^{m,\alpha}=(X^{m,\alpha}_t)_{t\geq 0}$ be the $\alpha$-Ford forward chain and $Y^{m,\alpha}=(Y^{m,\alpha}_t)_{t\geq 0}$ the backward chain. Then for all $\Ft,\Fs\in\FC_m$,
\begin{equation}
	\E_\Fs\left[{\bf 1}_\Ft(X^{m,\alpha}_t)\right]=\E_\Ft\left[{\bf 1}_\Fs(Y^{m,\alpha}_t)\exp\left(\int_0^t\beta^m_\alpha(Y^{m,\alpha}_s)\mathrm{d}s\right)\right],
\end{equation}
where $\beta^m_\alpha(\Ft):=(1-2\alpha)\big(\#(\mathrm{ch{\text -}lf}(\Ft))(2m-5)-m(m-1)\big)$. The second factor in $\beta^m_\alpha(\Ft)$ is the difference between the number of possible moves for the Kingman backward and forward chains.
\label{P:Feynman}
\end{proposition}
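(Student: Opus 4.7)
The plan is to invoke the standard Feynman-Kac duality principle: with $H(\Fs,\Ft):=\mathbf{1}_{\{\Fs=\Ft\}}$ as the duality function, the claim will follow from the generator-level identity
\begin{equation*}
    \widetilde{\Omega}_\alpha^m H(\cdot,\Ft)(\Fs) = \widetilde{\Omega}_{\alpha^\downarrow}^m H(\Fs,\cdot)(\Ft) + \beta_\alpha^m(\Ft)\, H(\Fs,\Ft),\qquad \Fs,\Ft\in\FC_m.
\end{equation*}
Granted this, running the two chains independently and differentiating the function $s\mapsto\E_{\Fs,\Ft}\bigl[H(X_s^{m,\alpha},Y_{t-s}^{m,\alpha})\exp\bigl(\int_0^{t-s}\beta_\alpha^m(Y_r^{m,\alpha})\,\mathrm{d}r\bigr)\bigr]$ on $[0,t]$ will yield a vanishing derivative by Dynkin's formula combined with the product rule, so that comparing the endpoints gives the stated identity. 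Integrability is automatic since $\FC_m$ is finite and $\beta_\alpha^m$ is bounded.

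For $\Fs\neq\Ft$ the generator identity is immediate: both sides reduce to $q_\alpha^m(\Fs,\Ft)=q_{\alpha^\downarrow}^m(\Ft,\Fs)$ by the very definition of the reversed transition rates, while the potential term vanishes since $H(\Fs,\Ft)=0$. All the content lies in the diagonal case $\Fs=\Ft$, where the identity reduces to
\begin{equation*}
    \beta_\alpha^m(\Ft) = R_{\alpha^\downarrow}^m(\Ft) - R_\alpha^m(\Ft),
\end{equation*}
with $R_\alpha^m(\Ft)$ (resp.\ $R_{\alpha^\downarrow}^m(\Ft)$) the total outgoing (resp.\ incoming) jump rate of the forward (resp.\ backward) chain at $\Ft$.

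The main obstacle is handling the self-loop pairs $(k,e)$ with $\Ft^{(k,e)}=\Ft$, which appear in the unrestricted double sums defining the generators but do not contribute to the jump rates. My plan is to observe that for each leaf $k\in\mathrm{lf}(\Ft)$ there is exactly one such edge $e^\ast(k)\in\mathrm{edge}(\Ft_{\wedge k})$, namely the edge produced by suppressing the degree-two vertex left behind when $k$ is detached; crucially, this edge is external if and only if $k$ is a cherry in $\Ft$. Consequently the weight assigned to the self-loop by the forward generator (which depends on the type of $e^\ast(k)$) and the one assigned by the backward generator (which depends on the cherry-status of $k$) always coincide, either both equal to $(1-\alpha)$ or both equal to $\alpha$. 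The self-loop contributions therefore cancel, so $R_{\alpha^\downarrow}^m(\Ft)-R_\alpha^m(\Ft)$ equals the difference of the two unrestricted weight sums.

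A short edge count then finishes the proof. Since $\Ft_{\wedge k}$ is an $(m-1)$-cladogram with $m-1$ external and $m-4$ internal edges independently of $k$, the total forward weight is $m\bigl[(1-\alpha)(m-1)+\alpha(m-4)\bigr]$, while grouping the backward sum by the cherry-status of $k$ and using $|\mathrm{edge}(\Ft_{\wedge k})|=2m-5$ yields $(2m-5)\bigl[C(1-\alpha)+(m-C)\alpha\bigr]$ with $C:=\#\mathrm{ch\text{-}lf}(\Ft)$. Taking the difference and factoring out $(1-2\alpha)$ produces exactly the claimed formula for $\beta_\alpha^m(\Ft)$; the identification of the second factor as the difference of the Kingman backward and forward move counts reads off from the $\alpha=0$ case of the same computation.
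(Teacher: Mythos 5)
Your proof is correct and rests on the same skeleton as the paper's: the duality function $H(\Fs,\Ft)=\mathbf{1}_{\{\Fs=\Ft\}}$, the generator identity
\begin{equation*}
	\widetilde{\Omega}_\alpha^m H(\cdot,\Ft)(\Fs)=\widetilde{\Omega}_{\alpha^\downarrow}^m H(\Fs,\cdot)(\Ft)+\beta^m_\alpha(\Ft)H(\Fs,\Ft),
\end{equation*}
and the standard Feynman--Kac lemma (the paper simply cites \cite[Lemma~4.4.11, Corollary~4.4.13]{EthierKurtz86} instead of re-deriving the interpolation argument you sketch). Where you genuinely diverge is in how this identity is verified. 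The paper checks it only at $\alpha=\tfrac12$ (trivial, since the Aldous chain is symmetric and $\beta^m_{1/2}\equiv0$) and at $\alpha=0$ (where the off-diagonal terms cancel via the bijection between forward and backward moves, leaving a count of leaf/external-edge pairs versus cherry/edge pairs), and then extends to all $\alpha$ through the affine relation $\widetilde{\Omega}_\alpha^m=(1-2\alpha)\widetilde{\Omega}_{\mathrm{Kin}}^m+2\alpha\widetilde{\Omega}_{\mathrm{Ald}}^m$. You instead verify it directly for every $\alpha$ by separating the off-diagonal case (which reduces to $q^m_{\alpha^\downarrow}(\Ft,\Fs)=q^m_\alpha(\Fs,\Ft)$, once one knows that insertion at an external edge corresponds to removal of a cherry) from the diagonal case, which you turn into a total-rate count; your explicit isolation of the self-loop pairs $(k,e^\ast(k))$ and of the equivalence ``$e^\ast(k)$ external $\Leftrightarrow$ $k\in\mathrm{ch{\text -}lf}(\Ft)$'' makes precise a cancellation that the paper's bijection argument handles only implicitly. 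Your route is uniform in $\alpha$ and self-contained (it does not import the symmetry of the Aldous chain from elsewhere), while the paper's is shorter because it recycles the known $\alpha=\tfrac12$ case and concentrates the new computation at $\alpha=0$; the combinatorial inputs (the counts $m-1$, $m-4$, $2m-5$ and the cherry/external-edge correspondence) are the same in both. One shared caveat: both arguments, like the stated formula for $\beta^m_\alpha$, implicitly assume $m\ge4$.
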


\begin{proof}
For $m\in\N$ and $\Ft,\Ft'\in\FC_m$, we write $H(\Ft',\Ft):={\bf 1}\{\Ft'=\Ft\}$. We claim that
\begin{equation}
\label{e:backfor}
	\widetilde{\Omega}_\alpha^mH(\cdot,\Ft)(\Ft')=\widetilde{\Omega}_{\alpha^\downarrow}^mH(\Ft',\cdot)(\Ft)+\beta^m_\alpha(\Ft)H(\Ft',\cdot)(\Ft),
\end{equation}
which is true for $\alpha=\frac{1}{2}$, since the Aldous chain is symmetric and $\beta^m_\frac{1}{2}(\Ft)=0$ for all $\Ft$. In the case $\alpha=0$ we have,
\begin{align*}
	\widetilde{\Omega}_\mathrm{Kin}^m &H(\cdot,\Ft)(\Ft')-\widetilde{\Omega}_{\mathrm{Kin}^\downarrow}^mH(\Ft',\cdot)(\Ft)\\
	&=\sum_{k\in\mathrm{lf}(\Ft')}\sum_{e\in\mathrm{ext{\text -}edge}(\Ft'_{\wedge k})}\left({\bf 1}_\Ft(\Ft'^{(k,e)})-{\bf 1}_\Ft(\Ft')\right)-\sum_{l\in\mathrm{ch{\text -}lf}(\Ft)}\sum_{f\in\mathrm{edge}(\Ft_{\wedge l})}\left({\bf 1}_{\Ft'}(\Ft^{(l,f)})-{\bf 1}_{\Ft'}(\Ft)\right)\\
	&=-\sum_{k\in\mathrm{lf}(\Ft')}\sum_{e\in\mathrm{ext{\text -}edge}(\Ft'_{\wedge k})}{\bf 1}_\Ft(\Ft')+\sum_{l\in\mathrm{ch{\text -}lf}(\Ft)}\sum_{f\in\mathrm{edge}(\Ft_{\wedge l})}{\bf 1}_{\Ft'}(\Ft)\\
	&=\beta^m_0(\Ft){\bf 1}_{\Ft'}(\Ft),
\end{align*}
where for the second equality we used that if there exists one forward move to go from $\Ft'$ to $\Ft$, then there exists one backward move to go from $\Ft$ to $\Ft'$, and reciprocally. And if this is the case, then both moves are unique.

Using \eqref{e:interpgenm} and an analogous relation for backward chains, we have \eqref{e:backfor}. The result then follows by \cite[Lemma 4.4.11, Corollary 4.4.13]{EthierKurtz86}
\end{proof}

We have defined a family of Markov chains on a finite state space. For all $\alpha\in[0,1)$, the chain is irreducible recurrent and thus has a unique invariant distribution. But the following result stills holds for $\alpha=1$.

\begin{proposition}
For all $\alpha\in[0,1]$ and $m\in\N$, the $\alpha$-Ford model on $m$-cladograms is the unique invariant distribution of the $\alpha$-Ford (forward) chain. In particular, for all $\phi\colon\FC_m\rightarrow\R$,
\begin{equation}\label{e:invdistm}
	\sum_{\Ft\in\FC_m}\widetilde{\PP}_\mathrm{Ford}^{\alpha,m}(\Ft)\widetilde{\Omega}_\alpha^m\phi(\Ft)=0,
\end{equation}
where $\widetilde{\PP}_\mathrm{Ford}^{\alpha,m}$ denotes the law of the $\alpha$-Ford model on $m$-cladograms.
\end{proposition}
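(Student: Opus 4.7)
The plan is to verify \eqref{e:invdistm} by exploiting the very construction of the $\alpha$-Ford model together with its sampling consistency, and to obtain uniqueness from the irreducibility noted in the paragraph just above the statement. I would first observe that the total exit rate $R_m:=\sum_{\Ft'}q_\alpha^m(\Ft,\Ft')$ is the same for every $\Ft\in\FC_m$: for any leaf label $k$, the cladogram $\Ft_{\wedge k}$ has $m-1$ external edges and, when $m\geq 4$, $m-4$ internal edges, independently of whether $k$ is a cherry or not. Hence \eqref{e:invdistm} is equivalent to invariance of $\widetilde{\PP}_{\mathrm{Ford}}^{\alpha,m}$ under the embedded discrete-time jump chain, whose single step reads: sample $K\in\{1,\dots,m\}$ uniformly, form $\Ft_{\wedge K}$, and reinsert a new leaf carrying label $K$ at an edge of $\Ft_{\wedge K}$ drawn with $\alpha$-Ford weights ($1-\alpha$ on external, $\alpha$ on internal edges).

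Since $\widetilde{\PP}_{\mathrm{Ford}}^{\alpha,m}$ is exchangeable in the leaf labels and the above transition is $S_m$-equivariant, it then suffices to verify invariance at the level of unlabelled $m$-shapes. Two properties of the $\alpha$-Ford model combine to do so. First, by the sampling consistency proved in \cite{Ford2005} and recalled in Section~\ref{S:Static}, deletion of a uniformly chosen leaf of an $\alpha$-Ford $m$-shape produces the $\alpha$-Ford $(m-1)$-shape. Second, the recursive construction of Section~\ref{S:introduction} is literally the rule \emph{``start from the $\alpha$-Ford $(m-1)$-shape and insert a new leaf at an edge drawn with the $\alpha$-Ford weights''}. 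Chaining the two statements shows that the jump kernel of the first paragraph maps $\widetilde{\PP}_{\mathrm{Ford}}^{\alpha,m}$ to itself, which is precisely the desired invariance~\eqref{e:invdistm}.

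For uniqueness, the finiteness of $\FC_m$ together with the irreducibility asserted for $\alpha\in[0,1)$ in the paragraph just above the proposition settles the case $\alpha<1$. The case $\alpha=1$ is slightly degenerate (for $m\leq 3$ the Comb chain cannot move at all) and has to be handled separately using the rigidity of the Comb cladogram. The main obstacle in the whole argument is the point implicit in the second paragraph: one has to confirm that the regular conditional law of $\Ft\sim\widetilde{\PP}_{\mathrm{Ford}}^{\alpha,m}$ given $\Ft_{\wedge K}$ with $K$ uniform is exactly the weighted-edge insertion of a fresh leaf, so that the two properties ``delete $\leadsto$ smaller $\alpha$-Ford shape'' and ``insert at weighted edge $\leadsto$ larger $\alpha$-Ford shape'' really compose into the identity on $\widetilde{\PP}_{\mathrm{Ford}}^{\alpha,m}$. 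Once this is granted, the proposition becomes a reformulation of the construction of the $\alpha$-Ford model itself.
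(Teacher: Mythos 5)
Your argument follows essentially the same route as the paper's proof: both rest on the constancy of the total exit rate $m(m-1-3\alpha)$ (so that invariance reduces to a one-step delete-and-reinsert identity), on deletion stability for the deletion half, and on the recursive construction for the insertion half. The step you defer (``once this is granted'') is, however, precisely where the paper's actual computation lives, and it is not a tautological ``reformulation of the construction'': the construction inserts a leaf carrying the \emph{last} label and then applies a uniform permutation of all $m$ labels, whereas your jump chain inserts a leaf carrying the uniformly chosen label $K$ into $\Ft_{\wedge K}$ with no further permutation. One must check that averaging over the $m$ possible values of the reinserted label, combined with exchangeability of the $(m-1)$-leaf $\alpha$-Ford model, reproduces the full permutation average; this is exactly the identity
\begin{equation*}
  \PP\big(X^m=\Ft'\big)=\frac{1}{m}\sum_{k=1}^m\PP\big(X^{m-1}={\Ft'}_{\wedge k}\big)\,
  \frac{(1-\alpha)\,{\bf 1}_{\{k\in\mathrm{ch{\text -}lf}(\Ft')\}}+\alpha\,{\bf 1}_{\{k\notin\mathrm{ch{\text -}lf}(\Ft')\}}}{m-1-3\alpha}
\end{equation*}
that the paper derives for the left-hand side of the balance equation, the cherry/non-cherry dichotomy arising because the reinsertion can produce $\Ft'$ through at most one edge, which is external exactly when $k$ is a cherry of $\Ft'$. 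Note also that what you actually need is only this marginal pushforward statement, not the stronger claim about the regular conditional law of $\Ft$ given $\Ft_{\wedge K}$, which would be a reversibility-type assertion and is more than the invariance requires. Finally, you are right to be suspicious about uniqueness at $\alpha=1$: for $m=4$ every $\Ft_{\wedge k}$ is a $3$-cladogram with no internal edges, so all rates of the Comb chain vanish and every law is invariant; the paper's proof in fact only establishes the identity \eqref{e:invdistm} and delegates uniqueness to the irreducibility remark, which is stated only for $\alpha\in[0,1)$.
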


\begin{proof}
Let $\alpha\in[0,1]$. We need only to prove the result for $\phi$ of the form ${\bf 1}_{\Ft'}$ with $\Ft'\in\FC_m$. Thus, we can rewrite \eqref{e:invdistm} as follows
\begin{equation}
	m(m-1-3\alpha)\widetilde{\PP}_\mathrm{Ford}^{\alpha,m}(\Ft')=\sum_{\Ft\in\FC_m}\widetilde{\PP}_\mathrm{Ford}^{\alpha,m}(\Ft)q^m_\alpha(\Ft,\Ft').
\end{equation}
Writing $X^m$ for the random $m$-cladogram distributed according to the $\alpha$-Ford model, we want to prove the equality:
\begin{equation}\label{eq:invdist}
	m(m-1-3\alpha)\PP(X^m=\Ft')=\sum_{\Ft\in\FC_m}\PP(X^m=\Ft)q^m_\alpha(\Ft,\Ft').
\end{equation}

For the right-hand side of the equation, we use the consistency property of the $\alpha$-Ford model. We have
\begin{equation}
\begin{aligned}
	\sum_{\Ft\in\FC_m}&\PP(X^m=\Ft)q^m_\alpha(\Ft,\Ft')\\
	&= \sum_{\Ft\in\FC_m}\PP(X^m=\Ft)\sum_{k\in\mathrm{lf}(\Ft)}\left((1-\alpha)\sum_{e\in\mathrm{ext{\text -}edge}(\Ft_{\wedge k})}{\bf 1}_{\Ft'}(\Ft^{(k,e)})+\alpha\sum_{e\in\mathrm{int{\text -}edge}(\Ft_{\wedge k})}{\bf 1}_{\Ft'}(\Ft^{(k,e)})\right)\\
	&= \sum_{k=1}^m\sum_{\substack{\Ft\in\FC_m\\ \Ft_{\wedge k}={\Ft'}_{\wedge k}}}\PP(X^m=\Ft)\left((1-\alpha)\sum_{e\in\mathrm{ext{\text -}edge}(\Ft_{\wedge k})}{\bf 1}_{\Ft'}(\Ft^{(k,e)})+\alpha\sum_{e\in\mathrm{int{\text -}edge}(\Ft_{\wedge k})}{\bf 1}_{\Ft'}(\Ft^{(k,e)})\right),
\end{aligned}
\end{equation}
because if ${\bf 1}_{\Ft'}(\Ft^{(k,e)})=1$, then $\Ft_{\wedge k}={\Ft'}_{\wedge k}$. Now, if $\Ft_{\wedge k}={\Ft'}_{\wedge k}$, then there exists $e\in\mathrm{ext{\text -}edge}(\Ft_{\wedge k})$ such that ${\bf 1}_{\Ft'}(\Ft^{(k,e)})=1$ only if $k\in\mathrm{ch{\text -}lf}(\Ft)$. In this case, the edge $e$ is unique. The same still holds for internal edges and non-cherry leaves. Furthermore, due to the consistency property of the $\alpha$-Ford model, we can write $\PP((X^m)_{\wedge k}={\Ft'}_{\wedge k})=\PP(X^{m-1}={\Ft'}_{\wedge k})$. Therefore,
\begin{equation}
	\sum_{\Ft\in\FC_m}\PP(X^m=\Ft)q^m_\alpha(\Ft,\Ft') = \sum_{k=1}^m\PP(X^{m-1}={\Ft'}_{\wedge k})\left((1-\alpha){\bf 1}_{\{k\in\mathrm{ch{\text -}lf}(\Ft')\}}+\alpha{\bf 1}_{\{k\notin\mathrm{ch{\text -}lf}(\Ft')\}}\right).
\end{equation}

For the left-hand side of equation \eqref{eq:invdist}, we use that we can obtain an $m$-cladogram with distribution the $\alpha$-Ford model as follows:
\begin{itemize}
	\item take an $(m-1)$-cladogram $z$ with distribution the $\alpha$-Ford model,
	\item pick an edge $e$ of $z$ randomly according to the weights of the $\alpha$-Ford model,
	\item insert a leaf labelled $k$ together with an edge at $e$, and denote this new $m$-cladogram by $z^e$,
	\item apply a uniform permutation $\sigma$ to the leaf labels of $z^e$. We write $\sigma(z^e)$ for the new $m$-cladogram.
\end{itemize}
Therefore, writing $\CS_m$ for the set of permutations of $\{1,...,m\}$,
\begin{equation}
\begin{aligned}
	\PP(X^m=\Ft') = & \sum_{z\in\FC_{m-1}}\PP(X^{m-1}=z)\frac{1}{m!}\sum_{\sigma\in\CS_m}\frac{1}{m-1-3\alpha}\\
	&\times\left((1-\alpha)\sum_{e\in\mathrm{ext{\text -}edge}(z)}{\bf 1}_{\Ft'}(\sigma(z^e))+\alpha\sum_{e\in\mathrm{int{\text -}edge}(z)}{\bf 1}_{\Ft'}(\sigma(z^e))\right)\\
	= & \sum_{z\in\FC_{m-1}}\PP(X^{m-1}=z)\frac{1}{m}\sum_{k=1}^m\frac{1}{(m-1)!}\sum_{\substack{\sigma\in\CS_m\\ \sigma(m)=k}}\frac{{\bf 1}_{{\Ft'}_{\wedge k}}(\sigma_{|\{1,...,m-1\}}(z))}{m-1-3\alpha}\\
	&\times\left((1-\alpha)\sum_{e\in\mathrm{ext{\text -}edge}(z)}{\bf 1}_{\Ft'}(\sigma(z^e))+\alpha\sum_{e\in\mathrm{int{\text -}edge}(z)}{\bf 1}_{\Ft'}(\sigma(z^e))\right).
\end{aligned}
\end{equation}
We used that if $\sigma\in\CS_m$ is such that $\sigma(m)=k$ and ${\bf 1}_\Ft(\sigma(z^e))=1$, then $(\sigma(z^e))_{\wedge_k}=\sigma_{|\{1,...,m-1\}}(z)$. Now, as for the right-hand side, if $\sigma(m)=k$ and $\sigma_{|\{1,...,m-1\}}(z)=\Ft_{\wedge k}$, then there exists $e\in\mathrm{ext{\text -}edge}(z)$ such that ${\bf 1}_\Ft(\sigma(z^{m,e}))=1$ only if $k\in\mathrm{ch{\text -}lf}(\Ft)$. In this case, the edge $e$ is unique, and this also holds for internal edges and non-cherry leaves. Thus we have
\begin{align*}
	\PP(X^m=\Ft') = & \frac{1}{m}\sum_{k=1}^m\frac{1}{(m-1)!}\sum_{\substack{\sigma\in\CS_m\\ \sigma(m)=k}}\sum_{z\in\FC_{m-1}}\PP\left(\sigma_{|\{1,...,m-1\}}(X^{m-1})=\sigma_{|\{1,...,m-1\}}(z)\right)\\
	& \times{\bf 1}_{{\Ft'}_{\wedge k}}(\sigma_{|\{1,...,m-1\}}(z))\frac{(1-\alpha){\bf 1}_{\{k\in\mathrm{ch{\text -}lf}(\Ft')\}}+\alpha{\bf 1}_{\{k\notin\mathrm{ch{\text -}lf}(\Ft')\}}}{m-1-3\alpha}\\
	= & \frac{1}{m}\sum_{k=1}^m\frac{1}{(m-1)!}\sum_{\substack{\sigma\in\CS_m\\ \sigma(m)=k}}\PP(X^{m-1}={\Ft'}_{\wedge k})\frac{(1-\alpha){\bf 1}_{\{k\in\mathrm{ch{\text -}lf}(\Ft')\}}+\alpha{\bf 1}_{\{k\notin\mathrm{ch{\text -}lf}(\Ft')\}}}{m-1-3\alpha}\\
	= & \frac{1}{m}\sum_{k=1}^m\PP(X^{m-1}={\Ft'}_{\wedge k})\frac{(1-\alpha){\bf 1}_{\{k\in\mathrm{ch{\text -}lf}(\Ft')\}}+\alpha{\bf 1}_{\{k\notin\mathrm{ch{\text -}lf}(\Ft')\}}}{m-1-3\alpha},
\end{align*}
where the second equality results again from the consistency property of the $\alpha$-Ford model. This gives \eqref{eq:invdist}, and thus the result.
\end{proof}

A binary tree in $\T_2^N$ can be seen as an $N$-cladogram with the uniform measure on leaves, and without leaf labels. Therefore, we can consider all the above Ford chains on $\T_2^N$ and we denote in this case the generators by $\Omega$ instead of $\widetilde{\Omega}$. For example, $\Omega_\alpha^N$ is the generator of the $\alpha$-Ford forward chain on $\T_2^N$. Using this idea, we let the number of leaves go to infinity and study the diffusion limit.

\section{The $\alpha$-Ford chain in the diffusion limit}
\label{S:diffusion limit}

When the state space is finite, a Markov chain is well-defined by its generator. But we now want to consider the diffusion approximation of the Ford chains on $\T_2^N$ as the number of leaves $N$ goes to infinity. As it is, we show here that the generators $(\Omega_\alpha^N)_N$ converge in a uniform way and that the martingale problem associated to the limit generator is well posed. To prove the uniqueness of the solution of the martingale problem, we use a Feynman-Kac duality result. Furthermore, the unique solution has continuous paths in $\T_2^\mathrm{cont}$, which we call the $\alpha$-Ford diffusion. Finally we show that the $\alpha$-Ford algebraic measure tree is an invariant distribution.

We introduce here the operator which is the limit of the generators $(\Omega_\alpha^N)_N$ (see Proposition \ref{p:convgen}). To define it, we use shape polynomials as test functions. Recall that $\Fs_{(T,c)}(\underline{u})$ denotes the shape spanned by $m$ points $\underline{u}=(u_1,...,u_m)$ sampled from the tree $(T,c)$.

\begin{defi}[Shape polynomials]
A \emph{shape polynomial} is a linear combination of functions $\Phi^{m,\Ft}:\T_2\rightarrow\R$ of the form
\begin{equation}
	\Phi^{m,\Ft}(\chi):=\mu^{\otimes m}(\Fs_{(T,c)}^{-1}(\Ft))=\int_{T^m}\mu^{\otimes m}(\mathrm{d}\underline{u}){\bf 1}_\Ft(\Fs_{(T,c)}(\underline{u})),
\end{equation}
where $\chi=(T,c,\mu)$, $m\in\N$ and $\Ft\in\overline{\FC}_m$. We write $\Pi_\Fs$ for the set of all shape polynomials.
\end{defi}

In other words, for $\Ft\in\FC_m$ and $\chi=(T,c,\mu)$, $\Phi^{m,\Ft}(\chi)$ describes the probability that the cladogram obtained by $m$ points sampled from $T$ w.r.t.\ $\mu$ is the cladogram $\Ft$.

The set $\Pi_\Fs$ is an algebra. With Proposition \ref{p:conveq}, it is contained in the space $\CC(\T_2)$ of continuous functions and it separates the points of $\T_2$. Therefore, since $\T_2$ is compact by Proposition \ref{p:compact}, it is dense in $\CC(\T_2)$ by the theorem of Stone-Weierstrass.

Since the infinite trees we consider are limits as $N\rightarrow\infty$ of trees in $\T_2^N$, Proposition \ref{p:approxT2} provides that we can limit our work to binary algebraic measure trees in $\T_2^\mathrm{cont}$. Now consider $m\in\N$ and $\Ft\in\overline{\FC}_m\setminus\FC_m$. Then if $\Fs_{(T,c)}(u_1,...,u_m)=\Ft$ for some $\chi=(T,c,\mu)\in\T_2^\mathrm{cont}$, we have that $u_1,...,u_m$ are not distinct, so that $\Phi^{m,\Ft}(\chi)=0$ because $\mathrm{at}(\mu)=\emptyset$. For this reason, we will limit the domain of the operator of the $\alpha$-Ford forward chain $\CD(\Omega_\alpha)$ to shape polynomials using $m$-cladograms instead of $m$-labelled cladograms:
\begin{equation}
	\CD(\Omega_\alpha):=\mathrm{span}\{\Phi^{m,\Ft}:m\in\N,\Ft\in\FC_m\},
\end{equation}
which is also dense in $\CC(\T_2)$.

We can now define the operator $\Omega_\alpha$ which acts on shape polynomials as follows:
\begin{equation}
	\Omega_\alpha\Phi^{m,\Ft}(\chi):=\int_T\mu^{\otimes m}(\mathrm{d}\underline{u})\widetilde{\Omega}_\alpha^m{\bf 1}_\Ft(\Fs_{(T,c)}(\underline{u})),
\end{equation}
where ${\bf 1}_\Ft$ plays the role of the test function for $\widetilde{\Omega}_\alpha^m$. We can then extend this definition by linearity to $\CD(\Omega_\alpha)$. Here again, we denote $\Omega_\alpha$ by $\Omega_\mathrm{Kin}$, $\Omega_\mathrm{Ald}$, $\Omega_\mathrm{Comb}$ when $\alpha=0, \frac{1}{2}, 1$ respectively, and we note that we have, for all $\alpha\in[0,1]$,
\begin{equation}\label{e:interpgen}
	\Omega_\alpha=(1-2\alpha)\Omega_\mathrm{Kin}+2\alpha\Omega_\mathrm{Ald},
\end{equation}
which we will use recurrently for the proofs.

\begin{proposition}
For all $\Phi\in\CD(\Omega_\alpha)$, we have $\Omega_\alpha\Phi\in\CD(\Omega_\alpha)$. In particular,
\begin{equation}
	(\Phi,\Omega_\alpha\Phi)\in\CC(\T_2)\times\CC(\T_2).
\end{equation}
\end{proposition}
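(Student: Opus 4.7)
The plan is to verify that $\Omega_\alpha$ sends each generator $\Phi^{m,\Ft}$ of $\CD(\Omega_\alpha)$ back into $\CD(\Omega_\alpha)$; once this is done, the $\CC(\T_2)$-membership of both $\Phi$ and $\Omega_\alpha\Phi$ is immediate from the inclusion $\CD(\Omega_\alpha)\subset\Pi_\Fs\subset\CC(\T_2)$ already recorded via Proposition~\ref{p:conveq}. So everything reduces to expressing $\widetilde{\Omega}_\alpha^m{\bf 1}_\Ft$ as a finite linear combination of indicators on $\FC_m$ and pushing this decomposition through the outer $\mu^{\otimes m}$-integral.

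To carry this out, I fix $m\in\N$ and $\Ft\in\FC_m$. Because $\FC_m$ is finite and every state has only finitely many accessible transitions, the generator formula gives
$$\widetilde{\Omega}_\alpha^m{\bf 1}_\Ft(\Fs)=\sum_{\Fs'\in\FC_m}q_\alpha^m(\Fs,\Fs')\bigl({\bf 1}_\Ft(\Fs')-{\bf 1}_\Ft(\Fs)\bigr)=q_\alpha^m(\Fs,\Ft)-\delta_{\Fs,\Ft}Q_\alpha^m(\Fs),$$
with $Q_\alpha^m(\Fs):=\sum_{\Fs'}q_\alpha^m(\Fs,\Fs')$. As a function on $\FC_m$, this reads $\widetilde{\Omega}_\alpha^m{\bf 1}_\Ft=\sum_{\Fs\in\FC_m}c_{\Fs,\Ft}\,{\bf 1}_\Fs$ with explicit real coefficients $c_{\Fs,\Ft}$. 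The combinatorial point to underline is that every $\alpha$-Ford move $\Fs\mapsto\Fs^{(k,e)}$ preserves the number of leaves and the injectivity of the labelling, so only elements of $\FC_m$ (not of $\overline{\FC}_m\setminus\FC_m$) appear on the right-hand side. Substituting into the definition of $\Omega_\alpha$ and exchanging the finite sum with the integral,
$$\Omega_\alpha\Phi^{m,\Ft}(\chi)=\sum_{\Fs\in\FC_m}c_{\Fs,\Ft}\int_{T^m}\mu^{\otimes m}(\mathrm{d}\underline{u})\,{\bf 1}_\Fs(\Fs_{(T,c)}(\underline{u}))=\sum_{\Fs\in\FC_m}c_{\Fs,\Ft}\,\Phi^{m,\Fs}(\chi),$$
which by definition lies in $\CD(\Omega_\alpha)$. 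Extending by linearity then handles every $\Phi\in\CD(\Omega_\alpha)$.

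The only bookkeeping point I would be careful about is that for $\chi\in\T_2\setminus\T_2^{\mathrm{cont}}$ the shape $\Fs_{(T,c)}(\underline{u})$ can land in $\overline{\FC}_m\setminus\FC_m$ on the positive-$\mu^{\otimes m}$-measure set of tuples whose coordinates coincide at atoms of $\mu$. However, ${\bf 1}_\Fs$ with $\Fs\in\FC_m$ vanishes on any shape with non-injective labelling, so such tuples contribute $0$ to each $\Phi^{m,\Fs}(\chi)$ and the decomposition above is valid pointwise on all of $\T_2$. Beyond this minor check, I do not expect any substantive obstacle: the whole argument is essentially a Fubini-style rearrangement combined with the fact that the Markov chain acts within the finite set $\FC_m$.
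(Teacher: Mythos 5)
Your argument is correct and is essentially the paper's proof: the paper simply observes that $\Omega_\alpha\Phi$ is again a shape polynomial and hence continuous, and your decomposition $\widetilde{\Omega}_\alpha^m{\bf 1}_\Ft=\sum_{\Fs\in\FC_m}c_{\Fs,\Ft}{\bf 1}_\Fs$ followed by the exchange of the finite sum with the $\mu^{\otimes m}$-integral is exactly the computation that justifies that one-line assertion. Your extra remark about tuples with coinciding coordinates on trees with atoms is a harmless refinement that the paper leaves implicit.
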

\begin{proof}
For $\Phi\in\CD(\Omega_\alpha)$, $\Phi$ and $\Omega_\alpha\Phi$ are shape polynomials, hence continuous by definition of sample shape convergence.
\end{proof}

To prove the existence, we will want to use, as in \cite{LoehrMytnikWinter}, an argument similar to \cite[Lemma 4.5.1]{EthierKurtz86}. For this, we use that $\Omega_\alpha$ is the limit of generators which each defines a martingale problem with solutions. This limit is uniform in the following sense:

\begin{proposition}[Convergence of generators]\label{p:convgen}
Let $\alpha\in[0,1]$. For all $\Phi\in\CD(\Omega_\alpha)$, we have
\begin{equation}
	\lim_{N\rightarrow\infty}\sup_{\chi\in\T_2^N}\left|\Omega_\alpha^N\Phi(\chi)-\Omega_\alpha\Phi(\chi)\right|=0.
\end{equation}
\end{proposition}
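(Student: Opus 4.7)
The plan is to reduce to test functions $\Phi=\Phi^{m,\Ft}$ with $\Ft\in\FC_m$ by linearity and then to match $\Omega_\alpha^N\Phi^{m,\Ft}(\chi)$ with $\Omega_\alpha\Phi^{m,\Ft}(\chi)$ via a double-sum rewrite whose discrepancy vanishes like $O(m^4/N)$ uniformly in $\chi\in\T_2^N$. For $\chi=(T,c,\mu)\in\T_2^N$ with $\mu$ uniform on the $N$ leaves, I would write both operators as sums over samples $\underline{u}\in\mathrm{lf}(T)^m$ and Fubini the big-chain generator through $\Phi^{m,\Ft}$ to obtain
\begin{equation*}
   \Omega_\alpha\Phi^{m,\Ft}(\chi)=\frac{1}{N^m}\sum_{\underline{u}}\widetilde{\Omega}_\alpha^m\mathbf{1}_\Ft\big(\Fs_{(T,c)}(\underline{u})\big),
\end{equation*}
and, since a $(k_N,e_N)$-move does not alter the coincidence pattern of $\underline{u}$ but only repositions the single physical leaf $k_N$,
\begin{equation*}
   \Omega_\alpha^N\Phi^{m,\Ft}(\chi)=\frac{1}{N^m}\sum_{\underline{u}}\sum_{\substack{(k_N,e_N)\\ k_N\in\underline{u}}}\bigl[(1-\alpha)\mathbf{1}_{e_N\text{ ext}}+\alpha\mathbf{1}_{e_N\text{ int}}\bigr]\bigl[\mathbf{1}_\Ft(\Fs_{\chi^{(k_N,e_N)}}(\underline{u}))-\mathbf{1}_\Ft(\Fs_\chi(\underline{u}))\bigr].
\end{equation*}

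Now split the outer sum by whether $\underline{u}$ has all distinct entries. For non-distinct $\underline{u}$, the big-chain move preserves the label coincidences (labels sharing a leaf in $\chi$ still share a leaf in $\chi^{(k_N,e_N)}$), hence $\Fs_{\chi^{(k_N,e_N)}}(\underline{u})\in\overline{\FC}_m\setminus\FC_m$, and both indicators vanish because $\Ft\in\FC_m$. So the non-distinct big-chain contribution is identically zero, while the matching small-chain contribution is trivially controlled by $O(m^4/N)$ coming from $O(m^2N^{m-1})$ non-distinct tuples times the uniform bound $O(m^2)$ for $\widetilde{\Omega}_\alpha^m\mathbf{1}_\Ft$. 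For distinct $\underline{u}$ I would index the inner sum by the unique $i$ with $u_i=k_N$ and partition the edges $e_N$ of $T_{\wedge u_i}$ according to which edge $e^*$ of $\Fs_{(T,c)}(\underline{u})_{\wedge i}$ they project onto. A careful bookkeeping of how the $N-m$ non-sample leaves sit in subtrees hanging off the spanning tree segments yields the combinatorial identity
\begin{equation*}
   n_{e^*}^{\mathrm{ext}}=\mathbf{1}_{e^*\text{ ext}}+\ell_{e^*},\qquad n_{e^*}^{\mathrm{int}}=\mathbf{1}_{e^*\text{ int}}+\ell_{e^*},
\end{equation*}
where $\ell_{e^*}$ counts non-sample leaves hanging off the segment assigned to $e^*$. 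Substituting gives that the distinct-$\underline{u}$ part of $\Omega_\alpha^N\Phi^{m,\Ft}(\chi)$ equals $N^{-m}\sum_{\underline{u}\text{ distinct}}\widetilde{\Omega}_\alpha^m\mathbf{1}_\Ft(\Fs(\underline{u}))$ plus a correction $N^{-m}\sum_{\underline{u}\text{ distinct}}\sum_i\sum_{e^*}\ell_{e^*}\bigl[\mathbf{1}_\Ft(\Fs^{(i,e^*)})-\mathbf{1}_\Ft(\Fs(\underline{u}))\bigr]$, with the $\alpha$-dependence cancelling out because $(1-\alpha)+\alpha=1$.

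The main obstacle is to show that this correction vanishes exactly. Reinterpreting $\ell_{e^*}(\chi,\underline{u},i)=\#\{v\in\mathrm{lf}(T)\setminus\underline{u}:\Fs_{(T,c)}(u_1,\ldots,u_{i-1},v,u_{i+1},\ldots,u_m)=\Fs(\underline{u})^{(i,e^*)}\}$ and swapping the inner sums, the correction becomes
\begin{equation*}
   \frac{1}{N^m}\sum_{i=1}^m\sum_{\substack{\underline{u}\text{ distinct}\\ v\notin\underline{u}}}\Big[\mathbf{1}_\Ft\bigl(\Fs_{(T,c)}(u_1,\ldots,v,\ldots,u_m)\bigr)-\mathbf{1}_\Ft\bigl(\Fs_{(T,c)}(\underline{u})\bigr)\Big],
\end{equation*}
and the fixed-point-free involution $(\underline{u},v)\leftrightarrow(u_1,\ldots,u_{i-1},v,u_{i+1},\ldots,u_m,u_i)$ on the set of distinct $(m+1)$-tuples exchanges the two summands in the bracket, so each orbit contributes zero. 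Combining this exact matching on the distinct part with the $O(m^4/N)$ bound on the non-distinct part yields $\sup_{\chi\in\T_2^N}|\Omega_\alpha^N\Phi^{m,\Ft}(\chi)-\Omega_\alpha\Phi^{m,\Ft}(\chi)|=O(m^4/N)\to 0$, proving the proposition.
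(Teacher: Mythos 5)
Your argument is correct, but it takes a genuinely different route from the paper's. The paper first uses the affine relation $\Omega_\alpha=(1-2\alpha)\Omega_{\mathrm{Kin}}+2\alpha\Omega_{\mathrm{Ald}}$ to reduce to $\alpha=0$ (importing the Aldous case from the literature), and then, for the Kingman case, expands $(\mu+\epsilon\delta_{y_e}-\epsilon\delta_x)^{\otimes m}-\mu^{\otimes m}$ on an extended tree, compares the sampling measures $\nu$ and $\mu$ on the sets $E_{\Ft,k}(\underline{u})$ via the cherry/non-cherry dichotomy, and recovers the forward generator only after a detour through the backward chain and the Feynman--Kac potential $\beta^m_0$. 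You instead treat all $\alpha\in[0,1]$ at once: your edge-counting identity $n^{\mathrm{ext}}_{e^*}=\mathbf{1}_{e^*\,\mathrm{ext}}+\ell_{e^*}$, $n^{\mathrm{int}}_{e^*}=\mathbf{1}_{e^*\,\mathrm{int}}+\ell_{e^*}$ (which I checked: the path edges of the segment contribute $\mathbf{1}_{e^*\,\mathrm{ext}}$ external and $r-\mathbf{1}_{e^*\,\mathrm{ext}}$ internal edges, and a hanging subtree with $\ell_t$ leaves contributes $\ell_t$ external and $\ell_t-1$ internal ones) makes the $\alpha$-dependent part reproduce exactly the rates of $\widetilde{\Omega}^m_\alpha$, while the $\alpha$-independent overcount $\ell_{e^*}$ is killed by your involution on distinct $(m+1)$-tuples. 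This buys a self-contained proof, uniform in $\alpha$, with an exact cancellation on distinct samples and an error coming only from coincident samples; what it does not produce is the identity relating $\widetilde{\Omega}^m_\alpha$ to $\widetilde{\Omega}^m_{\alpha^\downarrow}$, which the paper's computation yields as a by-product and then reuses for the duality and uniqueness arguments. Two small points you should make explicit: the identity $n^{\mathrm{ext}}_{e^*}=\mathbf{1}_{e^*\,\mathrm{ext}}+\ell_{e^*}$ requires $m\geq4$ (for $m\leq3$ a cladogram edge can be adjacent to two sample leaves, but there $\FC_m$ is a singleton, $\Phi^{m,\Ft}$ is constant on each $\T_2^N$, and both generators vanish, as the paper also notes); and the evaluation of $\widetilde{\Omega}^m_\alpha\mathbf{1}_\Ft$ at non-injective shapes in $\overline{\FC}_m\setminus\FC_m$ needs a convention, though any bounded one is absorbed by your $O(m^4/N)$ estimate for the non-distinct tuples.
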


\begin{proof}
Since the result was shown by \cite{LoehrMytnikWinter} for the Aldous case $\alpha=\frac{1}{2}$, and using \eqref{e:interpgen}, we need only to show it for the Kingman case $\alpha=0$.

Consider $\Phi\in\CD(\Omega_\alpha)$. By linearity, we can assume w.l.o.g.\ that $\Phi=\Phi^{m,\Ft}$ for some $m\in\N$ and $\Ft\in\FC_m$. If $m=1,2,3$, there is only one cladogram in $\FC_m$, so that $\Phi^{m,\Ft}$ is constant on $\T_2^N$, for each $N\in\N$. Therefore, $\Omega^N_\mathrm{Kin}\Phi^{m,\Ft}(\chi)=0$ for all $\chi\in\T_2^N$, and the convergence holds since $\Omega_\mathrm{Kin}\Phi^{m,\Ft}$ also equals to zero for $m=1,2,3$. Thus, we suppose $m\geq 4$. Fix $N\in\N$ and $\chi=(T,c,\mu)\in\T_2^N$. We write
\begin{equation}
	\epsilon:=\frac{1}{N}.
\end{equation}
We extend the algebraic tree to allow for potential new branch points and new leaves due to the chain moves on binary trees. To this end, for each edge $e\in\mathrm{edge}(T,c)$, we introduce two additional points $x_e$, $y_e$, i.e., we consider
\begin{equation}
	\overline{T}=T\cup\bigcup_{e\in\mathrm{edge}(T,c)}\{x_e,y_e\},
\end{equation}
and extend $c$ to $\overline{c}:\overline{T}^3\rightarrow\overline{T}$ which is uniquely defined as follows (Fig. \ref{f:extendedtree}). $(\overline{T},\overline{c})$ is an algebraic tree such that for $e=\{a,b\}\in\mathrm{edge}(T,c)$, we have $x_e\in[a,b]$ in $(\overline{T},\overline{c})$, and
\begin{equation}
	\overline{c}(y_e,x_e,z)=x_e,~~\forall z\in\overline{T}\setminus\{y_e\}.
\end{equation}

\begin{figure}[t]
\[
\xymatrix@C=0.8pc@R=0.8pc{
	{\bullet}\ar@{-}[ddrr] &&                  && {\bullet}\ar@{-}[dd] &&       && {\bullet} 		&&&&	{\bullet}\ar@{-}[dr] && {\circ}\ar@{-}[dl]  && {\bullet}\ar@{-}[d] &&       && {\bullet} \\
                           &&&&&&&&                                                             &&&&        & {\circ}\ar@{-}[dr] && &{\circ}\ar@{-}[d]&{\circ}\ar@{-}[l] &&{\circ}\ar@{-}[ur]\ar@{-}[dr] & \\
	                   && {\bullet}\ar@{-}[rr] && {\bullet}\ar@{-}[rr] && {\bullet}\ar@{-}[uurr] &&     &&&&          {\circ} && {\bullet}\ar@{-}[r] &{\circ}\ar@{-}[r]& {\bullet}\ar@{-}[r] &{\circ}\ar@{-}[r]& {\bullet}\ar@{-}[ur] &&{\circ} \\
                           &&&&&&&&                                                             &&&&        & {\circ}\ar@{-}[ur]\ar@{-}[ul]^(1){y_e} &&{\circ}\ar@{-}[u] && {\circ}\ar@{-}[u]&& {\circ}\ar@{-}[ul] & \\
	{\bullet}\ar@{-}[uurr]^{e} &&              &&                  &&      && {\bullet}\ar@{-}[uull]&&&&        {\bullet}\ar@{-}[ur]_(1){x_e} &&                  &&                  && {\circ}\ar@{-}[ur] && {\bullet}\ar@{-}[ul]
}\]
\caption{A finite algebraic tree $(T,c)$ and the extended tree $(\overline{T},\overline{c})$.}\label{f:extendedtree}
\end{figure}
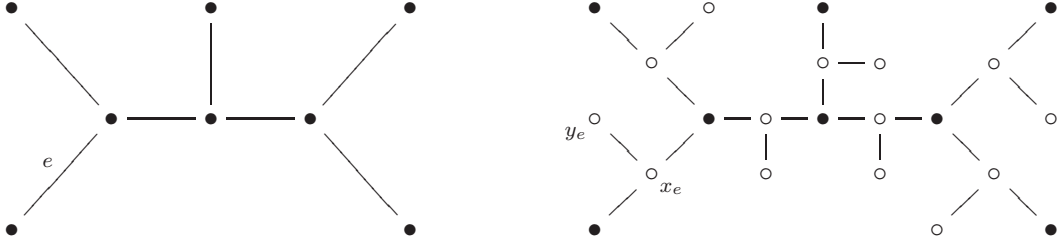

For $k\in\{1,...,m\}$ and $x\in\overline{T}$, let $\theta_{k,x}:T^m\rightarrow\overline{T}^m$ be the replacement operator which replaces the $k^{\mathrm{th}}$-coordinate by $x$. For $\chi=(T,\mu)=(\overline{T},\mu)$ and $(x,e)\in\mathrm{lf}(T,c)\times\mathrm{ext{\text -}edge}(T,c)$, we write $\chi^{(x,e)}$ the binary algebraic measure tree obtained by the chain move with $z$, i.e.,
\begin{equation}
	\chi^{(x,e)}:=(\overline{T},\overline{c},\mu+\epsilon\delta_{y_e}-\epsilon\delta_x).
\end{equation}
The difference between sampling with the new and old measure is given by
\begin{equation}
\begin{aligned}
	(\mu+\epsilon\delta_{y_e}&-\epsilon\delta_x)^{\otimes m}-\mu^{\otimes m}\\
	= &~ \epsilon\sum_{k=1}^m\mu^{\otimes (k-1)}\otimes(\delta_{y_e}-\delta_x)\otimes\mu^{\otimes (m-k)}\\
	& +\epsilon^2\sum_{1\leq k<j\leq m}\mu^{\otimes (k-1)}\otimes(\delta_{y_e}-\delta_x)\otimes\mu^{\otimes (j-1)}\otimes(\delta_{y_e}-\delta_x)\otimes\mu^{\otimes (m-j-k)}+\widetilde{\mu}\\
	= &~ \epsilon\sum_{k=1}^m(\mu^{\otimes m}\circ\theta^{-1}_{k,y_e}-\mu^{\otimes m}\circ\theta^{-1}_{k,x})-\epsilon^2\sum_{j\neq k=1}^m\mu^{\otimes m}\circ\theta^{-1}_{k,y_e}\circ\theta^{-1}_{j,x}+\widetilde{\mu},
\end{aligned}
\end{equation}
where $\widetilde{\mu}$ is a signed measure on $\overline{T}^m$ with $\widetilde{\mu}\{(u_1,...,u_m):u_1,...,u_m\text{ distinct}\}=0$. But since $\Ft\in\FC_m$, the leaf labels are distinct. Thus,
\begin{equation}
	\Omega_\mathrm{Kin}^N\Phi^{m,\Ft}(\chi)=\sum_{x\in\mathrm{lf}(T,c)}\sum_{e\in\mathrm{ext{\text -}edge}(T,c)}(\Phi^{m,\Ft}(\chi^{(x,e)})-\Phi^{m,\Ft}(\chi))=:\sum_{k=1}^m A_k-\sum_{k\neq j=1}^m B_{k,j},
\end{equation}
with
\begin{equation}
	A_k=\epsilon\sum_{x\in\mathrm{lf}(T,c)}\sum_{e\in\mathrm{ext{\text -}edge}(T,c)}\int_{T^m}\mu^{\otimes m}(\mathrm{d}\underline{u})\left({\bf 1}_\Ft(\Fs_{(\overline{T},\overline{c})}(\theta_{k,y_e}\underline{u}))-{\bf 1}_\Ft(\Fs_{(\overline{T},\overline{c})}(\theta_{k,x}\underline{u}))\right),
\end{equation}
and
\begin{equation}
	B_{k,j}=\epsilon^2\sum_{x\in\mathrm{lf}(T,c)}\sum_{e\in\mathrm{ext{\text -}edge}(T,c)}\int_{T^m}\mu^{\otimes m}(\mathrm{d}\underline{u}){\bf 1}_\Ft(\Fs_{(\overline{T},\overline{c})}(\theta_{k,y_e}\circ\theta_{j,x}\underline{u})).
\end{equation}

Recall the notation $\Ft_{\wedge k}\in\FC_{m-1}$ for the $(m-1)$-cladogram obtained from $\Ft$ by deleting the leaf with label $k$ (and relabelling the labels $j>k$ to $j-1$), i.e., if $\Ft=\Fs_{(\overline{T},\overline{c})}(\underline{u})$, then $\Ft_{\wedge k}=\Fs_{(\overline{T},\overline{c})}(\underline{u}_{\wedge k})$ with $\underline{u}_{\wedge k}=(u_1,...,u_{k-1},u_{k+1},...,u_m)$.

Furthermore, for $\underline{u}\in T^m$, we define
\begin{equation}
	E_{\Ft,k}(\underline{u})=\{v\in\overline{T}:\Fs_{(\overline{T},\overline{c})}(\theta_{k,v}\underline{u})=\Ft\}.
\end{equation}
Note that $E_{\Ft,k}(\underline{u})$ does not depend on $u_k$ and that $E_{\Ft,k}(\underline{u})\neq\emptyset$ only if $\Ft_{\wedge k}=\Fs_{(\overline{T},\overline{c})}(\underline{u}_{\wedge k})$. In this case, $\Ft_{\wedge k}=\Fs_{(\overline{T},\overline{c})}(\underline{u}_{\wedge k})$ "corresponds" to an edge of $\Ft_{\wedge k}$.
Let
\begin{equation}
	\nu:=\epsilon\sum_{e\in\mathrm{ext{\text -}edge}(T,c)}\delta_{y_e}
\end{equation}
be the uniform distribution on $\{y_e:e\in\mathrm{ext{\text -}edge}(T,c)\}$. By Fubini's theorem, we have that
\begin{equation}
\begin{aligned}
	A_k & = \int_{T^m}\mu^{\otimes m}(\mathrm{d}\underline{u})\left(\sum_{e\in\mathrm{ext{\text -}edge}(T,c)}{\bf 1}_\Ft(\Fs_{(\overline{T},\overline{c})}(\theta_{k,y_e}\underline{u}))-\sum_{x\in\mathrm{lf}(T,c)}{\bf 1}_\Ft(\Fs_{(\overline{T},\overline{c})}(\theta_{k,x}\underline{u}))\right)\\
	& = \epsilon^{-1}\int_{T^m}\mu^{\otimes m}(\mathrm{d}\underline{u})(\nu(E_{\Ft,k}(\underline{u}))-\mu(E_{\Ft,k}(\underline{u})))
\end{aligned}
\end{equation}
and
\begin{equation}
\begin{aligned}
	B_{k,j} & = \epsilon\sum_{e\in\mathrm{ext{\text -}edge}(T,c)}\int_T\mu(dx)\int_{T^m}\mu^{\otimes m}(\mathrm{d}\underline{u}){\bf 1}_\Ft(\Fs_{(\overline{T},\overline{c})}(\theta_{k,y_e}\circ\theta_{j,x}\underline{u}))\\
	& = \epsilon\int_{T^m}\mu^{\otimes m}(\mathrm{d}\underline{u})\sum_{e\in\mathrm{ext{\text -}edge}(T,c)}{\bf 1}_\Ft(\Fs_{(\overline{T},\overline{c})}(\theta_{k,y_e}\underline{u}))\\
	& = \int_{T^m}\mu^{\otimes m}(\mathrm{d}\underline{u})\nu(E_{\Ft,k}(\underline{u}))
\end{aligned}
\end{equation}

To go further in the calculation of $A_k$ and $B_{k,j}$, we want to find a relation between $\nu(E_{\Ft,k}(\underline{u}))$ and $\mu(E_{\Ft,k}(\underline{u}))$. To have $E_{\Ft,k}(\underline{u})\neq\emptyset$ we need $\Fs_{(\overline{T},\overline{c})}(\underline{u}_{\wedge k})=\Ft_{\wedge k}$. If it is the case, $N\nu(E_{\Ft,k}(\underline{u}))$ and $N\mu(E_{\Ft,k}(\underline{u}))$ are respectively given by the number of external edges and the number of leaves "between" two specific points $z_{k,\Ft,T},z'_{k,\Ft,T}\in T$ which correspond through $\Fs_{(\overline{T},\overline{c})}$ to the vertices in $\Ft$ that are neighbours of $j$ where $j$ is such that $\{j,k\}$ is an edge of $\Ft$ (see Fig. \ref{f:zkzkp}). For $z,z'\in T$,
\begin{align*}
	\#\{e\in\mathrm{ext{\text -}edge}(T,c)&:\overline{c}(y_e,z,z')\in(z,z')\}\\
	&=\#\{x\in\mathrm{lf}(T,c):\overline{c}(x,z,z')\in(z,z')\}+{\bf 1}_{\mathrm{lf}(T)}(z)+{\bf 1}_{\mathrm{lf}(T)}(z'),
\end{align*}
since if $z\in\mathrm{lf}(T)$, the edge adjacent to $z$ is included in the left-hand side of the inequality but $z$ is not included in the right-hand side and the same holds for $z'$.

\begin{figure}[t]
\[
\xymatrix@=1.4pc{
&{u_3}\ar@{-}[d]&&&&
&
&&&&&\\
u_1\ar@{-}[r]&{\bullet}\ar@{-}[dr]&&u_6\ar@{-}[d]&&{\bullet}\ar@{-}[dl]&
&
&u_1\ar@{-}[dr]&&u_6\ar@{-}[d]&&u_3\ar@{--}[dl]\\
&&{\bullet}\ar@{-}[r]&{\bullet}\ar@{-}[r]&{\bullet}&&
&
&&{\bullet}\ar@{-}[r]&{\bullet}\ar@{-}[r]&{\bullet}&\\
{\bullet}\ar@{-}[r]&{\bullet}\ar@{-}[ur]&&&&{\bullet}\ar@{-}[ul]&{\bullet}\ar@{-}[l]
&
u_4\ar@{-}[r]&{\bullet}\ar@{-}[ur]&&&&u_2\ar@{-}[ul]\\
&{\bullet}\ar@{-}[u]&&&&u_2\ar@{-}[u]&
&
&u_5\ar@{-}[u]&&&&\\
u_4\ar@{-}[ur]&&u_5\ar@{-}[ul]&&&&
&
&&&&&\\
&&&&&&&
&{\bullet}\ar@{-}[d]&&&&&\\
&{\bullet}\ar@{-}[dr]&&{\bullet}\ar@{-}[d]&&u_3\ar@{-}[dl]&&
{\bullet}\ar@{-}[r]&{\bullet}\ar@{-}[dr]&&{\bullet}\ar@{-}[d]&&{\circ}\ar@{--}[dl]&\\
&&{\bullet}\ar@{-}[r]&z_k\ar@{-}[r]&j&&&
&&{\bullet}\ar@{-}[r]&z_k\ar@{-}[r]&{\bullet}&&\\
{\bullet}\ar@{-}[r]&{\bullet}\ar@{-}[ur]&&&&z'_k\ar@{-}[ul]&&
{\bullet}\ar@{-}[r]&{\bullet}\ar@{-}[ur]&&&&{\bullet}\ar@{-}[ul]&{\circ}\ar@{--}[l]\\
&{\bullet}\ar@{-}[u]&&&&&&
&{\bullet}\ar@{-}[u]&&&&z'_k\ar@{--}[u]&\\
&&&&&&&
{\bullet}\ar@{-}[ur]&&{\bullet}\ar@{-}[ul]&&&&}\]
\caption{Here, $m=6$, $k=3$, with $T$ on the top left side and $\Ft$ on the top right side. We have that $\Ft_{\wedge3}=\Fs_{(\overline{T},\overline{c})}(\underline{u}_{\wedge3})$, and $u_2\in\{z_{k,\Ft,T},z'_{k,\Ft,T}\}$, so that ${\bf 1}_{\mathrm{lf}(T)}(z_{k,\Ft,T})+{\bf 1}_{\mathrm{lf}(T)}(z'_{k,\Ft,T})=1$.}\label{f:zkzkp}
\end{figure}

Therefore, we need to distinguish cases depending on the position of $k$ in $\Ft$, i.e.\ whether it is a cherry or not. Recall that $m\geq 4$. If $k\notin \mathrm{ch{\text -}lf}(\Ft)$, then $z_{k,\Ft,T},z'_{k,\Ft,T}\notin\mathrm{lf}(T)$, so that
\begin{equation}
	\nu(E_{\Ft,k}(\underline{u}))=\mu(E_{\Ft,k}(\underline{u}))
\end{equation}
and
\begin{equation}
\begin{aligned}
	A_k & = 0,\\
	B_{k,j} & = \Phi^{m,\Ft}(\chi).
\end{aligned}
\end{equation}
If $k\in \mathrm{ch{\text -}lf}(\Ft)$, then exactly one of $z_{k,\Ft,T}$ and $z'_{k,\Ft,T}$ is in $\mathrm{lf}(T)$, such that
\begin{equation}
	N\nu(E_{\Ft,k}(\underline{u}))-N\mu(E_{\Ft,k}(\underline{u}))={\bf 1}_{\Ft_{\wedge k}}(\Fs_{(\overline{T},\overline{c})}(\underline{u}_{\wedge k}))
\end{equation}
and
\begin{equation}
\begin{aligned}
	A_k & = \Phi^{m-1,\Ft_{\wedge k}}(\chi),\\
	B_{k,j} & = \Phi^{m,\Ft}(\chi)+\epsilon\Phi^{m-1,\Ft_{\wedge k}}(\chi).
\end{aligned}
\end{equation}
Therefore, for $m\geq4$, we have
\begin{equation}
	\Omega_\mathrm{Kin}^N\Phi^{m,\Ft}(\chi)=\sum_{k\in \mathrm{ch{\text -}lf}(\Ft)}\left(\Phi^{m-1,\Ft_{\wedge k}}(\chi)-\epsilon(m-1)\Phi^{m-1,\Ft_{\wedge k}}(\chi)\right)-m(m-1)\Phi^{m,\Ft}(\chi).
\end{equation}
Since
\begin{equation}
	\left|\sum_{k\in \mathrm{ch{\text -}lf}(\Ft)}\epsilon(m-1)\Phi^{m-1,\Ft_{\wedge k}}(\chi)\right|\leq\epsilon m(m-1),
\end{equation}
goes to $0$ as $N\rightarrow\infty$, we focus on the two other terms. We have
\begin{equation*}
	\sum_{k\in \mathrm{ch{\text -}lf}(\Ft)}\Phi^{m-1,\Ft_{\wedge k}}(\chi) = \int_{T^m}\mu^{\otimes m}(\mathrm{d}\underline{u})\sum_{k\in \mathrm{ch{\text -}lf}(\Ft)}{\bf 1}_{\Ft_{\wedge k}}(\Fs_{(T,c)}(\underline{u}_{\wedge k})).
\end{equation*}
For $\underline{u}\in T^m$, we have $\Fs_{(T,c)}(\underline{u}_{\wedge k})=\Ft_{\wedge k}$ if and only if there is an edge $e$ of $\Ft_{\wedge k}$ such that $\Fs_{(T,c)}(\underline{u})=\Ft^{(k,e)}$, where $\Ft^{(k,e)}$ is the $m$-cladogram obtained by inserting a leaf with label $k$ at the edge $e$ in $\Ft_{\wedge k}$ (and relabelling the labels $j\geq k$ to $j+1$). If such an edge $e$ exists, it is unique, and we have
\begin{equation}
	{\bf 1}_{\Ft_{\wedge k}}(\Fs_{(T,c)}(\underline{u}_{\wedge k}))=\sum_{e\in\mathrm{edge}(\Ft_{\wedge k})}{\bf 1}_{\Ft^{(k,e)}}(\Fs_{(T,c)}(\underline{u})).
\end{equation}
Recall (\ref{e:026}) the generator $\widetilde{\Omega}_{\mathrm{Kin}^\downarrow}^m$
of the backward Kingman chain. By linearity, we have
\begin{equation}
\begin{aligned}
\label{e:027}
	\sum_{k\in \mathrm{ch{\text -}lf}(\Ft)}\Phi^{m-1,\Ft_{\wedge k}}(\chi)
	&= \int_{T^m}\mu^{\otimes m}(\mathrm{d}\underline{u})\sum_{k\in \mathrm{ch{\text -}lf}(\Ft)}\sum_{e\in\mathrm{edge}(\Ft_{\wedge k})}{\bf 1}_{\Fs_{(T,c)}(\underline{u})}(\Ft^{(k,e)})\\
	&= \int_{T^m}\mu^{\otimes m}(\mathrm{d}\underline{u})\left(\widetilde{\Omega}_{\mathrm{Kin}^\downarrow}^m{\bf 1}_{\Fs_{(T,c)}(\underline{u})}(\Ft)+\#(\mathrm{ch{\text -}lf}(\Ft))\#\mathrm{edge}(\Ft_{\wedge k}){\bf 1}_{\Fs_{(T,c)}(\underline{u})}(\Ft)\right).
\end{aligned}
\end{equation}
With the notation $\beta_0^m(\Ft):=\#(\mathrm{ch{\text -}lf}(\Ft))(2m-5)-m(m-1)$, we have
\begin{equation}
\begin{aligned}
	\Omega_\mathrm{Kin}^N\Phi^{m,\Ft}(\chi)=&\int_{T^m}\mu^{\otimes m}(\mathrm{d}\underline{u})\left(\widetilde{\Omega}_{\mathrm{Kin}^\downarrow}^m{\bf 1}_{\Fs_{(T,c)}(\underline{u})}(\Ft)+\beta^m_0(\Ft){\bf 1}_{\Fs_{(T,c)}(\underline{u})}(\Ft)\right)\\
	&+\sum_{k\in \mathrm{ch{\text -}lf}(\Ft)}\epsilon(m-1)\Phi^{m-1,\Ft_{\wedge k}}(\chi).
\end{aligned}
\end{equation}
Finally, using the relation between the Kingman forward and backward chains stated in \eqref{e:backfor} with $\alpha=0$,
\begin{align*}
	\Omega_\mathrm{Kin}^N\Phi^{m,\Ft}(\chi) = & \int_{T^m}\mu^{\otimes m}(\mathrm{d}\underline{u})\widetilde{\Omega}_\mathrm{Kin}^m{\bf 1}_\Ft(\Fs_{(T,c)}(\underline{u}))+\sum_{k\in \mathrm{ch{\text -}lf}(\Ft)}\epsilon(m-1)\Phi^{m-1,\Ft_{\wedge k}}(\chi)\\
	= & \Omega_\mathrm{Kin}\Phi^{m,\Ft}(\chi)+\sum_{k\in \mathrm{ch{\text -}lf}(\Ft)}\epsilon(m-1)\Phi^{m-1,\Ft_{\wedge k}}(\chi),
\end{align*}
where the last term goes to $0$ as $N\rightarrow\infty$ uniformly over all $\chi\in\T_2^N$. We thus have the result for all $m\geq4$.
\end{proof}

We can now deduce from this convergence the two following results.

\begin{corollary}[The limiting martingale problem]\label{limmartprob}
Let $\alpha\in[0,1]$. Let $(\chi_N)_{N\in\N}$ be a sequence of random binary algebraic measure trees with $\chi_N\in\T_2^N$, such that
\begin{equation}
	\chi_N\Rightarrow\chi,~\text{ as }N\rightarrow\infty,
\end{equation}
where $\chi$ is a random tree in $\T_2^\mathrm{cont}$ with distribution $P_0$. Let $X^n:=(X_t^N)_{t\geq0}$ be the $\alpha$-Ford forward chain started in $\chi_N$. Then the sequence $(X^N)_{N\in\N}$ is tight in $\CD_{\T_2}$, and any limit point $(X_t)_{t\geq0}$ has continuous paths in $\T_2^\mathrm{cont}$ and satisfies the $(\Omega_\alpha,\CD(\Omega_\alpha),P_0)$-martingale problem.
\end{corollary}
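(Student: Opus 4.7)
The plan is to follow the standard ``generator convergence plus compactness'' route for weak convergence of Markov processes, exploiting the two main tools established in the preceding sections: the compactness of $\T_2$ and $\T_2^{\mathrm{cont}}$ (Proposition~\ref{p:compact}), and the uniform generator convergence $\Omega_\alpha^N\Phi\to\Omega_\alpha\Phi$ on $\T_2^N$ (Proposition~\ref{p:convgen}). For each $N$ and each $\Phi\in\CD(\Omega_\alpha)$, the Dynkin formula gives that
\begin{equation*}
   M^{N,\Phi}_t:=\Phi(X^N_t)-\Phi(X^N_0)-\int_0^t\Omega_\alpha^N\Phi(X^N_s)\,\mathrm{d}s
\end{equation*}
is a martingale. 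This decomposition is the starting point for both tightness and the identification of the limit.

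First I would prove tightness of $(X^N)_{N\in\N}$ in $\CD_{\T_2}[0,\infty)$. Since $\T_2$ is compact, the compact containment condition is automatic, so by a Jakubowski-type criterion (e.g.\ \cite[Theorem~3.9.1]{EthierKurtz86}) it suffices to show that, for every $\Phi$ in a set which is dense in $\CC(\T_2)$, the real-valued process $(\Phi(X^N_\cdot))_N$ is tight in $\CD_\R[0,\infty)$. I would take $\Phi$ ranging over shape polynomials $\Phi^{m,\Ft}\in\CD(\Omega_\alpha)$, which are dense in $\CC(\T_2)$. For such $\Phi$, Proposition~\ref{p:convgen} gives a uniform bound $\sup_N\sup_{\chi\in\T_2^N}|\Omega_\alpha^N\Phi(\chi)|<\infty$, so the finite-variation part of the semi-martingale decomposition is uniformly Lipschitz in $t$, while the martingale part has predictable quadratic variation controlled by the boundedness of $\Omega_\alpha^N\Phi^2-2\Phi\,\Omega_\alpha^N\Phi$. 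The Aldous criterion then yields tightness of $(\Phi(X^N))_N$, and hence of $(X^N)_N$.

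Second, I would identify any subsequential limit as a solution of the $(\Omega_\alpha,\CD(\Omega_\alpha),P_0)$-martingale problem. Passing to a subsequence and applying Skorokhod's representation, assume $X^N\to X$ almost surely in $\CD_{\T_2}$. For each $\Phi\in\CD(\Omega_\alpha)$, continuity of $\Phi$ and $\Omega_\alpha\Phi$ on the compact space $\T_2$, together with the uniform bound $\|\Omega_\alpha^N\Phi-\Omega_\alpha\Phi\|_\infty\to 0$ from Proposition~\ref{p:convgen}, lets me pass to the limit in $M^{N,\Phi}$ at all continuity times of $X$ via bounded convergence to obtain that
\begin{equation*}
   M^{\Phi}_t:=\Phi(X_t)-\Phi(X_0)-\int_0^t\Omega_\alpha\Phi(X_s)\,\mathrm{d}s
\end{equation*}
is a martingale; this is the standard argument, essentially \cite[Lemma~4.5.1]{EthierKurtz86}. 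The initial distribution is $P_0$ by hypothesis $X_0^N\Rightarrow\chi\sim P_0$.

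Third, I would verify that the limit has continuous paths and takes values in $\T_2^{\mathrm{cont}}$. A single jump of the $\alpha$-Ford chain changes the sampling measure by $\frac{1}{N}(\delta_{y_e}-\delta_x)$, i.e.\ by total variation $\tfrac{2}{N}$, so for any shape polynomial $\Phi^{m,\Ft}$ the jumps of $\Phi^{m,\Ft}(X^N_\cdot)$ are of order $m/N$; since shape polynomials separate points of $\T_2$ and $\T_2$ is compact metrizable, this forces all limit paths into $\CC_{\T_2}[0,\infty)$. For $X_t\in\T_2^{\mathrm{cont}}$, I use that $\T_2^{\mathrm{cont}}$ is closed in $\T_2$ (Proposition~\ref{p:compact}) and that the atoms of $X^N_t\in\T_2^N$ have size $1/N\to 0$, so any weak limit of $X^N_t$ lies in $\T_2^{\mathrm{cont}}$; combined with path continuity and Fubini this gives $X_t\in\T_2^{\mathrm{cont}}$ for all $t\geq 0$ almost surely. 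The principal obstacle in the argument is the tightness step, specifically checking that the fluctuations of the martingale parts $M^{N,\Phi}$ do not accumulate as $N\to\infty$; the book-keeping is the same as in the Aldous-chain case \cite{LoehrMytnikWinter} and here carries over unchanged thanks to the interpolation relation \eqref{e:interpgen}.
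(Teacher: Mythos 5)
Your proposal is correct and follows essentially the same route the paper intends: the paper derives this corollary directly from the uniform generator convergence of Proposition~\ref{p:convgen}, the compactness of $\T_2$ and closedness of $\T_2^{\mathrm{cont}}$ (Proposition~\ref{p:compact}), and the standard limit-of-generators machinery of \cite[Lemma~4.5.1]{EthierKurtz86} as in \cite{LoehrMytnikWinter}, which is exactly the tightness--identification--path-regularity argument you spell out. No discrepancy to report.
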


\begin{corollary}[Existence of a solution]\label{exismartprob}
Let $\alpha\in[0,1]$. For any probability measure $P_0$ on $\T_2^\mathrm{cont}$ there exists a solution in $\CC_{\T_2^\mathrm{cont}}(\R_+)$ to the $(\Omega_\alpha,\CD(\Omega_\alpha),P_0)$-martingale problem.
\end{corollary}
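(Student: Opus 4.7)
The approach is to deduce this existence statement directly from Corollary~\ref{limmartprob}: given any initial law $P_0$ on $\T_2^{\mathrm{cont}}$, it suffices to exhibit for each $N$ a random tree $\chi_N$ taking values in $\T_2^N$ such that $\chi_N \Rightarrow \chi$ with $\chi$ distributed according to $P_0$. Once such an approximating sequence is in hand, the $\alpha$-Ford chains $X^N$ started in $\chi_N$ form a tight family in $\CD_{\T_2}$, every subsequential limit has continuous paths in $\T_2^{\mathrm{cont}}$ and solves the $(\Omega_\alpha,\CD(\Omega_\alpha),P_0)$-martingale problem, and such a limit point exists because $\T_2$ is compact (Proposition~\ref{p:compact}).

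To produce the random approximations, I would invoke Proposition~\ref{p:approxT2}, which asserts that every $\chi \in \T_2^{\mathrm{cont}}$ is the limit in $\T_2$ of some sequence of elements of $\T_2^N$. A measurable version of this statement, obtained for instance by applying a measurable selection theorem to the multifunction $\chi \mapsto \{\tau \in \T_2^N : d(\tau,\chi) \leq \inf_{\T_2^N} d(\cdot,\chi) + 1/N\}$ (using that $\T_2$ is a compact metric space), yields a measurable map $\Pi_N : \T_2^{\mathrm{cont}} \to \T_2^N$ such that $\Pi_N(\chi) \to \chi$ in $\T_2$ for every $\chi \in \T_2^{\mathrm{cont}}$. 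Setting $\chi_N := \Pi_N(\chi)$ for $\chi \sim P_0$ then gives $\chi_N \to \chi$ almost surely, and in particular $\chi_N \Rightarrow \chi$. An alternative, more hands-on construction is to sample $\chi = (T,c,\mu) \sim P_0$ and, conditionally on $\chi$, draw $U_1,U_2,\ldots$ i.i.d.\ from $\mu$; since $\mu$ is non-atomic $P_0$-almost surely, setting $\chi_N$ to be the subcladogram spanned by $\{U_1,\ldots,U_N\}$ equipped with uniform mass $1/N$ on each $U_i$ gives an element of $\T_2^N$ whose sample shapes match those of $\chi$ by exchangeability, and hence $\chi_N \Rightarrow \chi$ by Proposition~\ref{p:conveq}.

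With this sequence of initial conditions fixed, Corollary~\ref{limmartprob} yields tightness of $(X^N)$ in $\CD_{\T_2}$ and ensures that every weak limit point lies in $\CC_{\T_2^{\mathrm{cont}}}(\R_+)$ and solves the martingale problem with initial law $P_0$. Prohorov's theorem, combined with the compactness of $\T_2$, furnishes such a limit point, completing the proof. The only non-routine ingredient is the existence of a suitable measurable approximation $\chi \mapsto \chi_N$, but in the present compact metrizable setting this is standard given Proposition~\ref{p:approxT2}, and the sampling construction above makes it fully explicit; everything else is a direct appeal to the preceding corollary.
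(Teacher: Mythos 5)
Your proposal is correct and follows essentially the same route as the paper: the authors present Corollary~\ref{exismartprob} as an immediate consequence of Corollary~\ref{limmartprob} together with the approximation property of Proposition~\ref{p:approxT2}, which is exactly your argument. The only material you add is the (standard but unstated) justification that the approximating initial conditions can be chosen measurably in $\chi$ when $P_0$ is not a point mass, via measurable selection or the explicit i.i.d.\ sampling construction; this fills a detail the paper leaves implicit rather than departing from its approach.
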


To prove the uniqueness of the solution for the martingale problem, we will use a result of duality which appeared in the proof of Proposition \ref{p:convgen}. We claim that we have a duality between a diffusion and a Markov chain on a finite state space: the dual of the $\alpha$-Ford diffusion is the dual $\alpha$-Ford Markov chain, that is, the backward chain.

\begin{proposition}[Feynman-Kac duality]
Let $\alpha\in[0,1]$. Let $P_0$ be a probability measure on $\T_2^{\mathrm{cont}}$, let $X:=((T_t,c_t,\mu_t))_{t\geq0}$ be a solution to the $(\Omega_\alpha,\CD(\Omega_\alpha),P_0)$-martingale problem in $\CD_{\T_2^{\mathrm{cont}}}(\R_+)$. For $m\in\N$ and $\Ft\in\FC_m$, we denote by $Y^m:=(Y_t^m)_{t\geq0}$ the $\alpha$-Ford backward chain on $\FC_m$-cladograms started in $Y_0^m=\Ft$. Then if $Y^m$ is independent of $X$,
\begin{equation}\label{e:duality}
	\E_{P_0}^X\left[\Phi^{m,\Ft}(X_t)\right]=\int_{\T_2^\mathrm{cont}}\E_\Ft^{Y^m}\left[\Phi^{m,Y_t^m}(\chi)\exp\left(\int_0^t\beta^m_\alpha(Y_s^m)\mathrm{d}s\right)\right]P_0(\mathrm{d}\chi),
\end{equation}
where $\beta^m_\alpha(\Ft)=(1-2\alpha)\left(\#(\mathrm{ch{\text -}lf}(\Ft))(2m-5)-m(m-1)\right)$.
\end{proposition}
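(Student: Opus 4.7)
My plan is to reduce the continuous-state Feynman--Kac duality to the discrete generator duality already established inside the proof of Proposition~\ref{P:Feynman} and then invoke the classical Ethier--Kurtz template.

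The pivotal step is to upgrade relation~\eqref{e:backfor} from $m$-cladograms to binary algebraic measure trees, i.e.\ to prove that for every $\chi\in\T_2^{\mathrm{cont}}$, $m\in\N$ and $\Ft\in\FC_m$,
\begin{equation}
\label{e:plangen}
\Omega_\alpha\Phi^{m,\Ft}(\chi)
 =
\widetilde{\Omega}_{\alpha^\downarrow}^m\Phi^{m,\cdot}(\chi)(\Ft)
 +\beta^m_\alpha(\Ft)\,\Phi^{m,\Ft}(\chi).
\end{equation}
By definition of $\Omega_\alpha$, the left-hand side equals $\int\mu^{\otimes m}(\mathrm{d}\underline{u})\,\widetilde{\Omega}^m_\alpha\mathbf{1}_\Ft(\Fs_{(T,c)}(\underline{u}))$. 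Evaluating~\eqref{e:backfor} at $\Ft'=\Fs_{(T,c)}(\underline{u})$ and integrating against $\mu^{\otimes m}$ therefore gives~\eqref{e:plangen}, since $\widetilde{\Omega}_{\alpha^\downarrow}^m$ is a finite linear combination of point evaluations at cladograms of the form $\Ft^{(k,e)}$, so Fubini lets us exchange it with the integral and it becomes $\widetilde{\Omega}_{\alpha^\downarrow}^m\Phi^{m,\cdot}(\chi)(\Ft)$; the $\beta^m_\alpha(\Ft)$-term passes through the integral untouched because it does not depend on $\underline{u}$.

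With~\eqref{e:plangen} in hand, I run the standard Feynman--Kac argument (cf.\ \cite[Cor.~4.4.13]{EthierKurtz86}). Fix $t>0$, $\Ft\in\FC_m$ and $\chi\in\T_2^{\mathrm{cont}}$; define
\begin{equation}
G(s,\chi,\Ft):=\E^{Y^m}_\Ft\!\left[\Phi^{m,Y^m_s}(\chi)\exp\!\left(\int_0^s\beta^m_\alpha(Y^m_r)\,\mathrm{d}r\right)\right],\qquad s\in[0,t].
\end{equation}
Because $\FC_m$ is finite, $G(s,\cdot,\Ft)$ is a finite linear combination of the shape polynomials $\{\Phi^{m,\Ft'}:\Ft'\in\FC_m\}$ with coefficients depending only on $(s,\Ft)$, hence belongs to $\CD(\Omega_\alpha)$. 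The Kolmogorov backward equation for $Y^m$ gives $\partial_s G(s,\chi,\Ft)=\widetilde{\Omega}_{\alpha^\downarrow}^m G(s,\chi,\cdot)(\Ft)+\beta^m_\alpha(\Ft)G(s,\chi,\Ft)$, which by~\eqref{e:plangen} equals $\Omega_\alpha G(s,\cdot,\Ft)(\chi)$. Consider $s\mapsto\E^X_{P_0}[G(t-s,X_s,\Ft)]$; using the martingale property of $G(t-s,X_s,\Ft)-G(t-s,X_0,\Ft)-\int_0^s\Omega_\alpha G(t-s,\cdot,\Ft)(X_u)\,\mathrm{d}u$ (applicable since $G(t-s,\cdot,\Ft)\in\CD(\Omega_\alpha)$) together with the pointwise ODE for $G$ shows that this map is constant on $[0,t]$. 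Evaluating at $s=0$ and $s=t$ and using $G(0,\chi,\Ft)=\Phi^{m,\Ft}(\chi)$ yields~\eqref{e:duality} after integrating $\chi$ against $P_0$.

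The main obstacle is purely bookkeeping: one must verify that differentiating under the expectation is permitted and that the two generator actions truly combine to zero. Boundedness of $\Phi^{m,\cdot}$ by $1$, the estimate $|\beta^m_\alpha|\leq 2m^2$, the uniform-in-$s$ membership of $G(s,\cdot,\Ft)$ in $\CD(\Omega_\alpha)$ together with boundedness of its image under $\Omega_\alpha$ (Proposition on $\Omega_\alpha\Phi\in\CD(\Omega_\alpha)$), and the martingale hypothesis on $X$ make this a routine dominated-convergence computation; hence no serious analytic difficulty arises and the argument reduces to carefully combining the two generator identities.
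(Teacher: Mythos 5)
Your proposal follows essentially the same route as the paper: the key step is exactly the paper's, namely integrating the discrete relation~\eqref{e:backfor} at $\Ft'=\Fs_{(T,c)}(\underline{u})$ against $\mu^{\otimes m}$ to obtain $\Omega_\alpha\Phi^{m,\Ft}(\chi)=\widetilde{\Omega}_{\alpha^\downarrow}^m\Phi^{m,\cdot}(\chi)(\Ft)+\beta^m_\alpha(\Ft)\Phi^{m,\Ft}(\chi)$. The only difference is that you spell out the standard Feynman--Kac/constancy-of-$s\mapsto\E[G(t-s,X_s,\Ft)]$ argument where the paper simply cites \cite[Lemma~4.4.11, Corollary~4.4.13]{EthierKurtz86}; this is correct and adds no new idea.
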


\begin{proof}
Consider $m\in\N$. For $\chi=(T,c,\mu)\in\T_2^\mathrm{cont}$ and $\Ft\in\FC_m$, we define $f(\chi,\Ft):=\Phi^{m,\Ft}(\chi)$. With this notation and using \eqref{e:backfor}, we can write
\begin{align*}
	\Omega_\alpha f(\cdot,\Ft)(\chi) := & \int_{T^m}\mu^{\otimes m}(\mathrm{d}\underline{u})\widetilde{\Omega}_\alpha^m{\bf 1}_\Ft(\Fs_{(T,c)}(\underline{u}))\\
	= & \int_{T^m}\mu^{\otimes m}(\mathrm{d}\underline{u})\left(\widetilde{\Omega}_{\alpha^\downarrow}^m{\bf 1}_{\Fs_{(T,c)}(\underline{u})}(\Ft)+\beta^m_\alpha(\Ft){\bf 1}_{\Fs_{(T,c)}(\underline{u}}(\Ft)\right)\\
	= & \widetilde{\Omega}_{\alpha^\downarrow}^m\left(\int_{T^m}\mu^{\otimes m}(\mathrm{d}\underline{u}){\bf 1}_{\Fs_{(T,c)}(\underline{u})}\right)(\Ft)+\beta^m_\alpha(\Ft)\Phi^{m,\Ft}(\chi)\\
	= & \widetilde{\Omega}_{\alpha^\downarrow}^mf(\chi,\cdot)(\Ft)+\beta^m_\alpha(\Ft)f(\chi,\Ft).
\end{align*}
The result then follows by \cite[Lemma 4.4.11]{EthierKurtz86} and \cite[Corollary 4.4.13]{EthierKurtz86}.
\end{proof}

\begin{proposition}[Uniqueness of the martingale problem]\label{uniqmartprob}
Let $\alpha\in[0,1]$. For all probability measures $P_0$ on $\T_2^\mathrm{cont}$, uniqueness holds for the $(\Omega_\alpha,\CD(\Omega_\alpha),P_0)$-martingale problem in $\CD_{\T_2^\mathrm{cont}}(\R_+)$.
\end{proposition}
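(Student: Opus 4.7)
The plan is to derive uniqueness of the one-dimensional marginal distributions from the Feynman-Kac duality just established, and then upgrade this to uniqueness of the whole law via a standard theorem of Ethier-Kurtz.

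First I would argue that the duality relation \eqref{e:duality} pins down the one-dimensional marginals. Let $X$ be any solution of the $(\Omega_\alpha,\CD(\Omega_\alpha),P_0)$-martingale problem with paths in $\CD_{\T_2^\mathrm{cont}}(\R_+)$. For each $m\in\N$ and $\Ft\in\FC_m$, applying the previous proposition yields an explicit expression
\begin{equation*}
   \E^X_{P_0}\big[\Phi^{m,\Ft}(X_t)\big]
   =
   \int_{\T_2^\mathrm{cont}}\E_\Ft^{Y^m}\Big[\Phi^{m,Y_t^m}(\chi)\exp\big(\textstyle\int_0^t\beta^m_\alpha(Y_s^m)\mathrm{d}s\big)\Big]P_0(\mathrm{d}\chi),
\end{equation*}
whose right-hand side depends only on $P_0$, $\alpha$, $m$, $\Ft$, and $t$. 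Hence $\E^X_{P_0}[\Phi(X_t)]$ is determined by $P_0$ for every $\Phi$ in the linear span $\CD(\Omega_\alpha)$.

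Next I would promote this to a statement about the full law of $X_t$. As noted just after the definition of $\Pi_\Fs$, shape polynomials form a subalgebra of $\CC(\T_2)$ that separates points of $\T_2$, so by Stone--Weierstrass (and the compactness of $\T_2$, cf.\ Proposition~\ref{p:compact}) they are dense in $\CC(\T_2)$. The restriction $\CD(\Omega_\alpha)$ consisting of $\Phi^{m,\Ft}$ with $\Ft\in\FC_m$ still separates points of $\T_2^\mathrm{cont}$, since atoms only appear at leaves and the non-injective labels contribute $0$ on $\T_2^\mathrm{cont}$. Therefore $\CD(\Omega_\alpha)$ is dense in $\CC(\T_2^\mathrm{cont})$, and the duality determines $\E^X_{P_0}[F(X_t)]$ for every $F\in\CC(\T_2^\mathrm{cont})$, i.e.\ the one-dimensional marginal distribution of $X_t$ is uniquely determined by $P_0$ for every $t\geq 0$.

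Finally I would invoke the standard reduction of uniqueness of the martingale problem to uniqueness of one-dimensional marginals. Concretely, by Theorem~4.4.2 in Ethier-Kurtz~\cite{EthierKurtz86}, since $\Omega_\alpha$ satisfies $(\Phi,\Omega_\alpha\Phi)\in\CC(\T_2^\mathrm{cont})\times\CC(\T_2^\mathrm{cont})$ for all $\Phi\in\CD(\Omega_\alpha)$, and since any two solutions starting from a common initial law agree in one-dimensional distributions, the finite-dimensional distributions agree and hence so do the laws on $\CD_{\T_2^\mathrm{cont}}(\R_+)$. The only genuinely delicate point is verifying the hypotheses of that theorem — namely that $\CD(\Omega_\alpha)$ is measure-determining on $\T_2^\mathrm{cont}$ — and this was precisely what the Stone-Weierstrass step above provided, so no additional obstacle arises.
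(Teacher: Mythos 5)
Your proposal is correct and follows exactly the route the paper intends (the paper states the proposition without writing out the proof, but announces that uniqueness will come from the Feynman--Kac duality with the backward chain): the duality identity determines $\E^X_{P_0}[\Phi^{m,\Ft}(X_t)]$ for all shape polynomials, the density of $\CD(\Omega_\alpha)$ in $\CC(\T_2)$ (already established via Stone--Weierstrass after Proposition~\ref{p:compact}) makes these functions measure-determining, and \cite[Theorem~4.4.2]{EthierKurtz86} upgrades uniqueness of one-dimensional marginals, valid for every initial law, to uniqueness of the martingale problem. No gaps.
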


Summing up the results of Corollaries \ref{limmartprob} and \ref{exismartprob}, and Proposition \ref{uniqmartprob}, we have shown Theorem \ref{t:martprob}. The following result gives the existence of an invariant distribution, namely the $\alpha$-Ford algebraic measure tree.

\begin{proposition}
Let $\alpha\in[0,1]$. The $\alpha$-Ford algebraic measure tree is an invariant distribution of the $\alpha$-Ford diffusion, that is, for all $m\in\N$ and $\Ft\in\FC_m$,
\begin{equation}
	\int_{\T_2^\mathrm{cont}}\PP_\mathrm{Ford}^\alpha(\mathrm{d}\chi)\Omega_\alpha\Phi^{m,\Ft}(\chi)=0,
\end{equation}
where $\PP_\mathrm{Ford}^\alpha$ is the law of the $\alpha$-Ford algebraic measure tree on $\T_2^\mathrm{cont}$.
\end{proposition}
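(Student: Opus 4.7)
The plan is to derive the invariance at the limit from the finite-$N$ invariance established for the $\alpha$-Ford chain on $\mathfrak{C}_m$, by using the uniform convergence of generators from Proposition~\ref{p:convgen} together with the defining weak convergence of the $\alpha$-Ford algebraic measure tree.

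First, I would lift the invariance statement from $\mathfrak{C}_N$ to $\mathbb{T}_2^N$. Writing $\mathbb{P}_{\mathrm{Ford}}^{\alpha,N}$ for the law on $\mathbb{T}_2^N$ of the $\alpha$-Ford model with $N$ leaves (obtained by forgetting leaf labels and adding the uniform measure on leaves), the relation \eqref{e:invdistm} together with the fact that shape polynomials $\Phi^{m,\mathfrak{t}}$ do not depend on labels yields, for every $m \in \mathbb{N}$ and $\mathfrak{t} \in \mathfrak{C}_m$,
\begin{equation*}
	\int_{\mathbb{T}_2^N} \mathbb{P}_{\mathrm{Ford}}^{\alpha,N}(\mathrm{d}\chi)\,\Omega_\alpha^N \Phi^{m,\mathfrak{t}}(\chi) = 0.
\end{equation*}

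Next, I would pass to the limit $N \to \infty$. By definition of the $\alpha$-Ford algebraic measure tree, $\mathbb{P}_{\mathrm{Ford}}^{\alpha,N} \Rightarrow \mathbb{P}_{\mathrm{Ford}}^\alpha$ weakly on $\mathbb{T}_2$ with respect to sample shape convergence. Since $\Omega_\alpha \Phi^{m,\mathfrak{t}} \in \mathcal{C}(\mathbb{T}_2)$ and $\mathbb{T}_2$ is compact (Proposition~\ref{p:compact}), in particular $\Omega_\alpha \Phi^{m,\mathfrak{t}}$ is bounded and continuous, so
\begin{equation*}
	\int_{\mathbb{T}_2^N} \mathbb{P}_{\mathrm{Ford}}^{\alpha,N}(\mathrm{d}\chi)\,\Omega_\alpha \Phi^{m,\mathfrak{t}}(\chi) \underset{N\to\infty}{\longrightarrow} \int_{\mathbb{T}_2^{\mathrm{cont}}} \mathbb{P}_{\mathrm{Ford}}^\alpha(\mathrm{d}\chi)\,\Omega_\alpha \Phi^{m,\mathfrak{t}}(\chi),
\end{equation*}
where the limit is supported on $\mathbb{T}_2^{\mathrm{cont}}$ by Proposition~\ref{p:approxT2}. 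Splitting
\begin{equation*}
	0 = \int \mathbb{P}_{\mathrm{Ford}}^{\alpha,N}(\mathrm{d}\chi)\,\Omega_\alpha^N \Phi^{m,\mathfrak{t}}(\chi) = \int \mathbb{P}_{\mathrm{Ford}}^{\alpha,N}(\mathrm{d}\chi)\,\Omega_\alpha \Phi^{m,\mathfrak{t}}(\chi) + \int \mathbb{P}_{\mathrm{Ford}}^{\alpha,N}(\mathrm{d}\chi)\,\bigl(\Omega_\alpha^N - \Omega_\alpha\bigr) \Phi^{m,\mathfrak{t}}(\chi),
\end{equation*}
the second term is bounded in absolute value by $\sup_{\chi \in \mathbb{T}_2^N}|\Omega_\alpha^N \Phi^{m,\mathfrak{t}}(\chi) - \Omega_\alpha \Phi^{m,\mathfrak{t}}(\chi)|$, which tends to zero by Proposition~\ref{p:convgen}. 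Combining the two convergences gives the claim.

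No step is genuinely an obstacle here: the main inputs, namely the finite-$N$ invariance of the $\alpha$-Ford model under its associated chain, the weak convergence $\mathbb{P}_{\mathrm{Ford}}^{\alpha,N} \Rightarrow \mathbb{P}_{\mathrm{Ford}}^\alpha$, and the uniform generator convergence, have all been established earlier in the paper. The only point to be careful about is to use the uniform (not merely pointwise) convergence of Proposition~\ref{p:convgen}, since the measures $\mathbb{P}_{\mathrm{Ford}}^{\alpha,N}$ depend on $N$.
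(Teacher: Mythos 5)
Your argument is correct, but it follows a genuinely different route from the paper's. You prove invariance by a limiting procedure: lift the finite-level invariance \eqref{e:invdistm} to $\T_2^N$, use the weak convergence $\PP_{\mathrm{Ford}}^{\alpha,N}\Rightarrow\PP_{\mathrm{Ford}}^\alpha$ together with $\Omega_\alpha\Phi^{m,\Ft}\in\CC(\T_2)$ on the compact space $\T_2$, and control the discrepancy between $\Omega_\alpha^N$ and $\Omega_\alpha$ via the uniform generator convergence of Proposition~\ref{p:convgen}. The paper instead works directly with the limit object: since the $\alpha$-Ford algebraic measure tree is built from a sampling-consistent family, the annealed law of the shape of $m$ samples is exactly the $\alpha$-Ford model on $m$-cladograms, i.e.\ $\E_{\mathrm{Ford}}^\alpha[\Phi^{m,\Ft}(\chi)]=\widetilde{\PP}_{\mathrm{Ford}}^{\alpha,m}(\Ft)$; by Fubini and the definition $\Omega_\alpha\Phi^{m,\Ft}(\chi)=\int\mu^{\otimes m}(\mathrm{d}\underline{u})\,\widetilde{\Omega}_\alpha^m{\bf 1}_\Ft(\Fs_{(T,c)}(\underline{u}))$, the quantity to be shown zero equals $\sum_{y\in\FC_m}\widetilde{\PP}_{\mathrm{Ford}}^{\alpha,m}(y)\,\widetilde{\Omega}_\alpha^m{\bf 1}_\Ft(y)$, which vanishes by \eqref{e:invdistm} applied at level $m$ only. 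The paper's argument is thus shorter and needs neither Proposition~\ref{p:convgen} nor the invariance at every level $N$; your argument is heavier but more robust, in that it would identify any weak limit point of the stationary laws $\PP_{\mathrm{Ford}}^{\alpha,N}$ as invariant even without knowing its sampling distribution explicitly. One small point to make explicit in your first step is the lumping argument: the chain on $\T_2^N$ is the image of the labelled chain on $\FC_N$ under the label-forgetting map, and $\Phi^{m,\Ft}$ lifts to a label-invariant function on $\FC_N$, so \eqref{e:invdistm} (at level $N$) indeed transfers to $\int_{\T_2^N}\PP_{\mathrm{Ford}}^{\alpha,N}(\mathrm{d}\chi)\,\Omega_\alpha^N\Phi^{m,\Ft}(\chi)=0$; this is consistent with how the paper itself passes from $\widetilde{\Omega}_\alpha^N$ to $\Omega_\alpha^N$.
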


\begin{proof}
We fix $m\in\N$ and $\Ft\in\FC_m$. We have that
\begin{equation}
	\E_\mathrm{Ford}^\alpha\left[\Phi^{m,\Ft}(\chi)\right]=\E_\mathrm{Ford}^\alpha\left[\mu^{\otimes m}\{\underline{u}:\Fs_{(T,c)}(\underline{u})=\Ft\}\right]=\widetilde{\PP}_\mathrm{Ford}^{\alpha,m}(\Ft),
\end{equation}
where $\widetilde{\PP}_\mathrm{Ford}^{\alpha,m}$ denotes the law of the $\alpha$-Ford model on $m$-cladograms. Therefore

\begin{align*}
	\int_{\T_2^\mathrm{cont}}\PP_\mathrm{Ford}^\alpha(\mathrm{d}\chi)\Omega_\alpha\Phi^{m,\Ft}(\chi) = & \int_{\T_2^\mathrm{cont}}\PP_\mathrm{Ford}^\alpha(\mathrm{d}\chi)\int_{T^m}\mu^{\otimes m}(\mathrm{d}\underline{u})\widetilde{\Omega}_\alpha^m{\bf 1}_\Ft(\Fs_{(T,c)}(\underline{u}))\\
	= & \int_{\FC_m}\widetilde{\PP}_\mathrm{Ford}^{\alpha,m}(\mathrm{d}y)\widetilde{\Omega}_\alpha^m{\bf 1}_\Ft(y)=0,
\end{align*}
since the $\alpha$-Ford model on $m$-cladograms is the invariant distribution of the $\alpha$-Ford chain.
\end{proof}

A natural question to ask is then whether the $\alpha$-Ford diffusion converges toward the $\alpha$-Ford invariant distribution, for any initial distribution. This result would give in the same time the uniqueness of the invariant distribution. However, the difficulty to show this result come from the exponential term in the duality equation \eqref{e:duality} and therefore it remains an open question.

\section{Application on sample subtree masses}
\label{S:applications}

In this section we are interested in the infinitesimal evolution of the law of the vector of subtree masses under the $\alpha$-Ford diffusion. Recall the definition of the subtree masses $\underline{\eta}(\underline{u})$ for $\underline{u}=(u_1,u_2,u_3)\in T^3$, that is,
\begin{equation}
	\underline{\eta}(\underline{u})=\left(\eta_i(\underline{u})\right)_{i=1,2,3}=\left(\mu(\CS_{c(\underline{u})}(u_i))\right)_{i=1,2,3}.
\end{equation}
Recall also from \eqref{e:Phi} that for mass polynomials of degree 3
\begin{equation}
\label{e:Phi2}
	\Phi^f(\chi)=\int_{T^3}\mu^{\otimes 3}(\mathrm{d}\underline{u})f\left(\underline{\eta}(\underline{u})\right)
\end{equation}
where $f\in\CC^2([0,1]^3)$ and $\chi = (T,c,\mu)\in\T_2$, we extend the generator of the $\alpha$-Ford diffusion by defining
\begin{align*}
	\Omega_\alpha\Phi^f(\chi)=& \int\mu^{\otimes 3}(\mathrm{d}\underline{u})\left(\sum_{i,j=1}^{3}\eta_i(\delta_{ij}-\eta_j)\partial^2_{ij}f(\underline{\eta}(\underline{u}))+(2-\alpha)\sum_{i=1}^3(1-3\eta_i)\partial_i f(\underline{\eta}(\underline{u}))\right.\\
	& +\frac{\alpha}{2}\sum_{i\neq j=1}^3 \frac{{\bf 1}_{\eta_i\neq 0}}{\eta_i}\left(f\circ\theta_{i,j}(\underline{\eta}(\underline{u}))-f(\underline{\eta}(\underline{u}))\right)+\frac{\alpha}{2}\sum_{i\neq j=1}^3\left({\bf 1}_{\eta_j=0}-{\bf 1}_{\eta_i=0}\right)\partial_if(\underline{\eta}(\underline{u}))\\
	& \left.+(2-3\alpha)\sum_{i=1}^3\left(f(e_i)-f(\underline{\eta}(\underline{u}))\right)\right).
\end{align*}

\begin{proposition}
Let $\alpha\in[0,1]$. For all test functions $\Phi^f$ of the form \eqref{e:Phi2} with $f\colon[0,1]^3\rightarrow\R$ twice continuously differentiable,
\begin{equation}
	\lim_{N\rightarrow\infty}\sup_{\chi_N\in\T_2^N}\left|\Omega_\alpha^N\Phi^f(\chi)-\Omega_\alpha\Phi^f(\chi)\right|=0.
\end{equation}
\end{proposition}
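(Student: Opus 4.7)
The plan is to reduce the claim to two cases using the linear interpolation
\begin{equation*}
   \Omega_\alpha = (1-2\alpha)\Omega_{\mathrm{Kin}} + 2\alpha\,\Omega_{\mathrm{Ald}},
\end{equation*}
which one verifies directly by grouping the five terms in the extended formula for $\Omega_\alpha\Phi^f$: the Wright--Fisher term is $\alpha$-independent, the drift coefficient $(2-\alpha)$ is an affine function of $\alpha$, and likewise $(2-3\alpha)$, while the two boundary terms vanish at $\alpha=0$ and receive coefficient $\frac{1}{2}$ at $\alpha=\frac{1}{2}$. The same affine relation holds for the discrete pre-limit generators $\Omega_\alpha^N$ by the analogue of \eqref{e:interpgenm} on $\T_2^N$. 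Consequently, it suffices to prove the uniform convergence for $\alpha=\frac{1}{2}$ (the Aldous case, which is established in \cite{LoehrMytnikWinter}) and for $\alpha=0$ (the Kingman case, which I now sketch).

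Fix $\chi=(T,c,\mu)\in \T_2^N$ and set $\epsilon=1/N$. Work in the extended tree $(\overline{T},\overline{c})$ from the proof of Proposition~\ref{p:convgen}, so that a Kingman move at $(x,e)\in\mathrm{lf}(T)\times\mathrm{ext\text{-}edge}(T)$ replaces $\mu$ by $\mu'=\mu+\epsilon\delta_{y_e}-\epsilon\delta_x$. Exactly as in the proof of Proposition~\ref{p:convgen}, expand
\begin{equation*}
   (\mu')^{\otimes 3}-\mu^{\otimes 3}
   = \epsilon\sum_{k=1}^{3}\mu^{\otimes 2}\otimes_k(\delta_{y_e}-\delta_x)
   + \epsilon^2\!\!\sum_{k\neq j}\mu\otimes_{k,j}(\delta_{y_e}-\delta_x)^{\otimes 2}
   + O(\epsilon^3),
\end{equation*}
and separately, for each sample $\underline{u}$ with $u_k\neq x$ for all $k$, Taylor expand $f$ around $\underline\eta_{\chi}(\underline{u})$ to second order, noting that the three entries of $\underline\eta$ change by at most $\pm\epsilon$ each according to the relative position of $x$, $y_e$ and the branch point $c(\underline u)$. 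Since $f\in\mathcal{C}^2([0,1]^3)$, the remainders are uniformly $O(\epsilon^3)$ per chain move, and there are $O(N^2)$ moves, so the aggregate error is $O(\epsilon)$.

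Summing over all $(x,e)$ and reorganising by the number of coordinates of $\underline u$ that coincide with $x$ gives three kinds of contribution. The terms with no coincidence produce, after integrating the difference quotients against $\mu$ in the variable $x$ and against the uniform distribution on external-edge insertion points, the Wright--Fisher diffusion coefficient $\sum_{i,j}\eta_i(\delta_{ij}-\eta_j)\partial^2_{ij}f$ and the drift $2\sum_i(1-3\eta_i)\partial_i f$; the factor $2$ arises because Kingman moves insert only at external edges, producing a mass balance at each leaf of the sampled subtree equivalent to $(2-\alpha)|_{\alpha=0}=2$. The terms in which exactly one coordinate $u_k$ equals $x$ yield, after the measure $\delta_x$ collapses $u_k$ onto the chosen leaf, the catastrophe term $2\sum_i(f(e_i)-f(\underline\eta))$ (the factor $2$ is again the number of relevant external-edge insertions that re-map a sample point to a unit vector), while the contributions from two or three coincidences are of order $\epsilon$ and are absorbed in the error.

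The main obstacle is the combinatorial bookkeeping in Step~3: one must track how each coordinate of $\underline\eta_\chi(\underline u)$ is perturbed as a function of the position of $c(\underline u)$ relative to the deleted edge at $x$ and the reinserted edge at $y_e$, and one must carefully separate the samples with $u_k=x$ (which give the finite-size jumps to the corners of $\Delta_2$) from those with all coordinates distinct from $x$ (which give the diffusive and drift parts). Once this decomposition is in place, the uniform bound on the error follows from $\|f\|_{\mathcal{C}^2}<\infty$ and the compactness of $\T_2$, completing the Kingman case and hence, via the interpolation, the statement for general $\alpha\in[0,1]$.
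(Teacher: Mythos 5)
Your overall route coincides with the paper's: reduce to the cases $\alpha=\tfrac12$ (quoted from \cite{LoehrMytnikWinter}) and $\alpha=0$ via the affine interpolation $\Omega_\alpha=(1-2\alpha)\Omega_{\mathrm{Kin}}+2\alpha\Omega_{\mathrm{Ald}}$, then treat the Kingman case on the extended tree $(\overline T,\overline c)$ by expanding $(\mu+\epsilon\delta_{y_e}-\epsilon\delta_x)^{\otimes 3}-\mu^{\otimes 3}$ and Taylor-expanding $f$; this is exactly the paper's decomposition into $\Delta_\mu$ and $\Delta_f$. The interpolation check and the derivation of the Wright--Fisher term from the second-order Taylor expansion (with the antisymmetric first-order terms cancelling) are sound.

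The gap is in the step you yourself flag as ``the main obstacle'': the bookkeeping that produces the drift and catastrophe coefficients is not carried out, and where you do sketch a mechanism it is the wrong one. You attribute the catastrophe term $2\sum_i\bigl(f(e_i)-f(\underline\eta)\bigr)$ to samples in which ``exactly one coordinate $u_k$ equals $x$'', i.e.\ to the $-\epsilon\delta_x$ part of the measure expansion. But setting $u_k=x$ (the removed leaf) only shifts one entry of $\underline\eta$ by $O(\epsilon)$ and does not drive $\underline\eta$ to a corner of $\Delta_2$; following that bookkeeping literally, the $f(e_i)$ contributions would never appear. In the paper they arise from the $+\epsilon\delta_{y_e}$ part: when one converts the sum over external edges (the measure $\nu=\epsilon\sum_e\delta_{y_e}$) into a sum over leaves (the measure $\mu$), the two disagree exactly when the insertion edge $e$ is the external edge of another sampled point $u_i$; in that event the branch point of the new triple sits next to $u_i$, two of the three components have mass $O(\epsilon)$, and $f(\underline\eta)\to f(e_i)$ --- this is the correction term $a_k(\epsilon)$ in the paper, while the $-2\sum_i f(\underline\eta)$ piece comes from the $\epsilon^2$ cross terms $B_{k,j}$. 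Relatedly, your justifications of the factors $2$ in the drift and catastrophe terms (``equivalent to $(2-\alpha)|_{\alpha=0}=2$'', ``again the number of relevant external-edge insertions'') read the coefficients off the formula to be proven rather than deriving them; as written the argument would equally well ``confirm'' a wrong constant. To close the proof one must actually perform the $A_k$/$B_{k,j}$/$a_k(\epsilon)$ computation (or an equivalent one) that locates each limiting term in the correct part of the expansion.
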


\begin{proof}
The result has already been proved by \cite{LoehrMytnikWinter} for the Aldous case $\alpha=\frac{1}{2}$ so it is enough to show it for the Kingman case. Consider $f\in\CC^2([0,1]^3)$, and
recall the notations introduced in the proof of Proposition~\ref{p:convgen}. For each permutation $\pi$ of $\{1,2,3\}$, define $\pi_*\colon\Delta_3\rightarrow\Delta^3$ by $\pi_*(\underline{\eta})=(\eta_{\pi(1)},\eta_{\pi(2)},\eta_{\pi(3)})$. Since $\Phi^f=\Phi^{f\circ\pi_*}$ and $\Omega_\mathrm{Kin}\Phi^f=\Omega_\mathrm{Kin}\Phi^{f\circ\pi_*}$ for every permutation of $\{1,2,3\}$, we may and do assume w.l.o.g.\ that $f$ is symmetric.

Let $\Ft$ be the only $3$-cladogram. Since $\mathrm{at}(\mu)=\emptyset$, we can introduce a term ${\bf 1}_{\Ft}\left(\Fs_{(T,c)}(\underline{u})\right)$, which we do for later purpose. We thus have
\begin{align*}
	\Omega_\mathrm{Kin}^N\Phi^f(\chi)= & \Omega_\mathrm{Kin}^N\int_{T^3}\mu^{\otimes 3}(\mathrm{d}\underline{u}){\bf 1}_{\Ft}\left(\Fs_{(T,c)}(\underline{u})\right)f\left(\underline{\eta}_\chi(\underline{u})\right)\\
	:=& \sum_{x\in\mathrm{lf}(T,c)}\sum_{e\in\mathrm{ext{\text -}edge}(T,c)}\left(\int_{\overline{T}^3}(\mu^z)^{\otimes 3}(\mathrm{d}\underline{u}){\bf 1}_{\Ft}\left(\Fs_{(\overline{T},\overline{c})}(\underline{u})\right)f\left(\underline{\eta}_{\chi^{(x,e)}}(\underline{u})\right)\right.\\
	&-\left.\int_{T^3}\mu^{\otimes 3}(\mathrm{d}\underline{u}){\bf 1}_{\Ft}\left(\Fs_{(T,c)}(\underline{u})\right)f\left(\underline{\eta}_\chi(\underline{u})\right)\right)\\
	=& \sum_{x\in\mathrm{lf}(T,c)}\sum_{e\in\mathrm{ext{\text -}edge}(T,c)}\left(\int_{\overline{T}^3}((\mu^z)^{\otimes 3}-\mu^{\otimes 3})(\mathrm{d}\underline{u}){\bf 1}_{\Ft}\left(\Fs_{(\overline{T},\overline{c})}(\underline{u})\right)f\left(\underline{\eta}_{\chi^{(x,e)}}(\underline{u})\right)\right)\\
	&+\sum_{x\in\mathrm{lf}(T,c)}\sum_{e\in\mathrm{ext{\text -}edge}(T,c)}\left(\int_{T^3}\mu^{\otimes 3}(\mathrm{d}\underline{u})\left(f(\underline{\eta}_{\chi^{(x,e)}}(\underline{u}))-f(\underline{\eta}_\chi(\underline{u}))\right)\right)\\
=:&\Delta_\mu+\Delta_f.
\end{align*}

The term $\Delta_f$ appears when the measure $\mu$ is left unchanged, but there is change in the three masses. It gives the Wright-Fisher term. To see this, fix $\underline{u}\in T^3$. We abbreviate $\eta_i=\eta_i(\underline{u})$ as long as $\underline{u}$ is fixed. We denote the components of $T\setminus\{c(\underline{u})\}$ by $S_i$, $i=1,2,3$, ordered such that $\eta_i=\mu(S_i)$. For all $z=(x,e)\in\mathrm{Kin}^\uparrow(T,c)$ with $x\in S_i$ and $e\in S_j$, we have by a Taylor expansion that
\begin{equation}
	f(\underline{\eta}_{\chi^{(x,e)}}(\underline{u}))-f(\underline{\eta}_\chi(\underline{u}))=\left(\epsilon(\partial_j-\partial_i)+\frac{\epsilon^2}{2}(\partial^2_{ii}+\partial^2_{jj}-2\partial^2_{ij}))\right)f(\underline{\eta})+o(\epsilon^2),
\end{equation}
and the $o(\epsilon^2)$-term is uniform in the binary trees with $N$ leaves as $N\rightarrow\infty$. Now summing over all $z\in\mathrm{Kin}^\uparrow(T,c)$, we have
\begin{align*}
	\sum_{x\in\mathrm{lf}(T,c)}\sum_{e\in\mathrm{ext{\text -}edge}(T,c)}&\left(f(\underline{\eta}_{\chi^{(x,e)}}(\underline{u}))-f(\underline{\eta}_\chi(\underline{u}))\right)\\
	&=\sum_{i\neq j=1}^{2m-3}\frac{\eta_i}{\epsilon}\frac{\eta_j}{\epsilon}\epsilon\left((\partial_j-\partial_i+\frac{\epsilon}{2}(\partial^2_{ii}+\partial^2_{jj}-2\partial^2_{ij}))f(\underline{\eta})+o(\epsilon)\right)\\
	&=\sum_{i\neq j=1}^{2m-3}\eta_i\eta_j(\partial^2_{ii}-\partial^2_{ij})f(\underline{\eta})+o(1)\\
	&=\sum_{i, j=1}^{2m-3}\eta_i(\delta_{ij}-\eta_j)\partial^2_{ij}f(\underline{\eta})+o(1).
\end{align*}
where we used for the second equality that the highest order term is anti-symmetric in $i\neq j$.

Finally, Fubini's Theorem gives
\begin{equation*}\label{e:deltaf}
	\Delta_f=\int_{T^m}\mu^{\otimes m}(\mathrm{d}\underline{u}){\bf 1}_{\Ft}(\Fs_{(T,c)}(\underline{u}))\sum_{i, j=1}^{2m-3}\eta_i(\delta_{ij}-\eta_j)\partial^2_{ij}f(\underline{\eta})+o(1).
\end{equation*}

The term $\Delta_\mu$ gives the effect of the change in $\mu$, when the subtree masses are considered after the chain move. We use for this the same decomposition as in the proof of Proposition \ref{p:convgen}. For $k=1,2,3$ and $x\in\overline{T}$, recall that $\theta_{k,x}:T^3\rightarrow\overline{T}^3$ is the replacement operator which replaces the $k^{\mathrm{th}}$-coordinate by $x$. The difference between sampling with the new and old measure is given by
\begin{equation*}
	(\mu^z)^{\otimes 3}-\mu^{\otimes 3}	= \epsilon\sum_{k=1}^3(\mu^{\otimes 3}\circ\theta^{-1}_{k,y_e}-\mu^{\otimes 3}\circ\theta^{-1}_{k,x})-\epsilon^2\sum_{j\neq k=1}^3\mu^{\otimes 3}\circ\theta^{-1}_{k,y_e}\circ\theta^{-1}_{j,x}+\widetilde{\mu},
\end{equation*}
where $\widetilde{\mu}$ is a signed measure on $\overline{T}^3$ with $\widetilde{\mu}\{(u_1,u_2,u_3):u_1,u_2,u_3\text{ distinct}\}=0$. But since $\Ft\in\FC_3$, the leaf labels are distinct. The purpose of introducing ${\bf 1}_{\Ft}\left(\Fs_{(T,c)}(\underline{u})\right)$ is to be able to ignore the term due to $\widetilde{\mu}$. Thus,
\begin{equation}
	\Delta_\mu=:\sum_{k=1}^3 A_k-\sum_{k\neq j=1}^3 B_{k,j},
\end{equation}
with
\begin{align*}
	A_k= & \epsilon\sum_{x\in\mathrm{lf}(T,c)}\sum_{e\in\mathrm{ext{\text -}edge}(T,c)}\left(\int_{T^3}\mu^{\otimes 3}(\mathrm{d}\underline{u}){\bf 1}_\Ft(\Fs_{(\overline{T},\overline{c})}(\theta_{k,y_e}\underline{u}))f(\underline{\eta}_{\chi^{(x,e)}}(\theta_{k,y_e}\underline{u}))\right.\\
	& -\left.\int_{T^3}\mu^{\otimes 3}(\mathrm{d}\underline{u}){\bf 1}_\Ft(\Fs_{(\overline{T},\overline{c})}(\theta_{k,x}\underline{u}))f(\underline{\eta}_{\chi^{(x,e)}}(\theta_{k,x}\underline{u}))\right),
\end{align*}
and
\begin{equation*}
	B_{k,j}=\epsilon^2\sum_{x\in\mathrm{lf}(T,c)}\sum_{e\in\mathrm{ext{\text -}edge}(T,c)}\int_{T^3}\mu^{\otimes 3}(\mathrm{d}\underline{u}){\bf 1}_\Ft(\Fs_{(\overline{T},\overline{c})}(\theta_{k,y_e}\circ\theta_{j,x}\underline{u}))f(\underline{\eta}_{\chi^{(x,e)}}(\theta_{k,y_e}\circ\theta_{j,x}\underline{u})).
\end{equation*}

We calculate first $B_{k,j}$. Let $k\neq j$. Note that the element $u_j$ sampled according to $\mu$ does not appear in the integrand, but is replaced by the leaf $x$ which is sampled according to
\begin{equation*}
	\epsilon\sum_{x\in\mathrm{lf}(T)}\delta_x = \mu.
\end{equation*}
Thus, using first Fubini's theorem, we can write
\begin{equation*}
	B_{k,j}=\epsilon\int_{T^3}\mu^{\otimes 3}(\mathrm{d}\underline{u})\sum_{e\in\mathrm{ex{\text -}edge}(T,c)}{\bf 1}_\Ft(\Fs_{(\overline{T},\overline{c})}(\theta_{k,y_e}\underline{u}))f(\underline{\eta}_{\chi^(u_j,e)}(\theta_{k,y_e}\underline{u})).
\end{equation*}
In the same way, the element $u_k$ does not appear in the integrand. However, it is this time replaced by the leaf $y_e$ which is sampled according to
\begin{equation*}
	\nu:=\epsilon\sum_{e\in\mathrm{ex-edge}(T)}\delta_{y_e} \neq \mu.
\end{equation*}
But we can still find a similar relation. Indeed, consider $(u_1,u_2,u_3)$ sampled from $(T,c)$ according to $\mu$. One can notice that if $e\notin\{e_{u_i},i=1,2,3\}$, where $e_u$ denotes the external edge connected to the leaf $u$, then
\begin{equation*}
	{\bf 1}_\Ft(\Fs_{(\overline{T},\overline{c})}(\theta_{k,y_e}\underline{u}))f(\underline{\eta}_{\chi^(u_j,e)}(\theta_{k,y_e}\underline{u}))={\bf 1}_\Ft(\Fs_{(\overline{T},\overline{c})}(\theta_{k,u}\underline{u}))f(\underline{\eta}_{\chi^(u_j,e_u)}(\theta_{k,u}\underline{u})),
\end{equation*}
where $u\in\mathrm{lf}(T)$ is such that $e_u=e$. In other words, sampling according to $\nu$ instead of $\mu$ leaves the above quantity unchanged when $e\notin\{e_{u_i},i=1,2,3\}$. This is also true when $e=e_{u_k}$. But if $e=e_{u_i}$ for some $i\in\{1,2,3\}\setminus\{k\}$, then
\begin{equation*}
	{\bf 1}_\Ft(\Fs_{(\overline{T},\overline{c})}(\theta_{k,u_i}\underline{u}))=0,
\end{equation*}
because $\Ft\in\FC_3$ has distinct leaf labels, but not $\Fs_{(\overline{T},\overline{c})}(\theta_{k,u_i}\underline{u})$. Therefore using that $\mathrm{at}(\mu)=\emptyset$, we can write
\begin{align*}
	B_{k,j}=&\int_{T^3}\mu^{\otimes 3}(\mathrm{d}\underline{u})f(\underline{\eta}_{\chi^(u_j,e_{u_k})}(\underline{u}))+\epsilon\int_{T^3}\mu^{\otimes 3}(\mathrm{d}\underline{u})\sum_{\substack{i=1\\ i\neq k}}^3f(\underline{\eta}_{\chi^(u_j,e_{u_i})}(\theta_{k,y_{e_{u_i}}}\underline{u}))\\
	=&\int_{T^3}\mu^{\otimes 3}(\mathrm{d}\underline{u})f(\eta_j-\epsilon,\eta_k+\epsilon,\eta_i))+O(\epsilon)\\
	=&\int_{T^3}\mu^{\otimes 3}(\mathrm{d}\underline{u})f(\underline{\eta}_\chi(\underline{u}))+O(\epsilon),
\end{align*}
where we used for the second equality that $f$ is bounded on $[0,1]^3$, and for the last equality a Taylor expansion and that $f'$ is also bounded on $[0,1]^3$. We proceed in the same way to calculate $A_k$: since $\mathrm{at}(\mu)=\emptyset$,
\begin{align*}
	A_k=& \int_{T^3}\mu^{\otimes 3}(\mathrm{d}\underline{u})\sum_{x\in\mathrm{lf}(T,c)}f(\underline{\eta}_{\chi^{(x,e_{u_k})}}(\underline{u})) +\epsilon\int_{T^3}\mu^{\otimes 3}(\mathrm{d}\underline{u})\sum_{x\in\mathrm{lf}(T,c)}\sum_{\substack{i=1\\ i\neq k}}^3f(\underline{\eta}_{\chi^{(x,e_{u_i})}}(\theta_{k,y_{e_{u_i}}}\underline{u}))\\
	& -\int_{T^3}\mu^{\otimes 3}(\mathrm{d}\underline{u})\sum_{e\in\mathrm{ex-edge}(T,c)}f(\underline{\eta}_{\chi^{(u_k,e)}}(\underline{u}))\\
	=& \int_{T^3}\mu^{\otimes 3}(\mathrm{d}\underline{u})\sum_{i=1}^3\frac{\eta_i}{\epsilon}(f(\eta_i-\epsilon,\eta_k+\epsilon,\eta_j)-f(\eta_i+\epsilon,\eta_k-\epsilon,\eta_j))+a_k(\epsilon)\\
	=& \int_{T^3}\mu^{\otimes 3}(\mathrm{d}\underline{u})2\sum_{i=1}^3\eta_i(\partial_k-\partial_i) f(\underline{\eta}_\chi(\underline{u}))+a_k(\epsilon)+O(\epsilon),
\end{align*}
where $a_k(\epsilon)$ denotes the correction term due to a difference between the sampling of leaves and the sampling of external edges:
\begin{align*}
	a_k(\epsilon):=&\epsilon\int_{T^3}\mu^{\otimes 3}(\mathrm{d}\underline{u})\sum_{x\in\mathrm{lf}(T,c)}\sum_{\substack{i=1\\ i\neq k}}^3f(\underline{\eta}_{\chi^{(x,e_{u_i})}}(\theta_{k,y_{e_{u_i}}}\underline{u}))\\
	=&\int_{T^3}\mu^{\otimes 3}(\mathrm{d}\underline{u})\sum_{\substack{i=1\\ i\neq k}}^3\epsilon\left(\sum_{x\in\mathrm{lf}(T,c)\setminus\{u_i\}}f(\epsilon,\epsilon,1-2\epsilon)+f(0,\epsilon,1-\epsilon)\right)\\
	=&\int_{T^3}\mu^{\otimes 3}(\mathrm{d}\underline{u})2f(0,0,1)+O(\epsilon).
\end{align*}

In order to calculate $\sum_{k=1}^3 A_k$, we use that
\begin{align*}
	\sum_{k=1}^3\sum_{i=1}^3\eta_i(\partial_k-\partial_i) f(\underline{\eta}_\chi(\underline{u})) =& \sum_{k=1}^3\partial_k f(\underline{\eta}_\chi(\underline{u}))-3 \sum_{i=1}^3\eta_i\partial_i f(\underline{\eta}_\chi(\underline{u}))\\
	=& \sum_{i=1}^3(1-3\eta_i)\partial_k f(\underline{\eta}_\chi(\underline{u})).
\end{align*}
Adding up the results for $\Delta_f$ \eqref{e:deltaf} and $\Delta_\mu$, the proposition is proved for $\alpha=0$, and so for all $\alpha\in[0,1]$.
\end{proof}

We are now interested in the subtree masses distribution of the limit process, which the $\alpha$-Ford algebraic measure tree. Thus we are looking at the moments of this distribution. For $\underline{k}=(k_1,k_2,k_3)\in\N^3$, we define $f^{(\underline{k})}\colon\Delta_2\rightarrow \R$ by
\begin{equation}
	f^{(\underline{k})}(\underline{\eta})=\eta_1^{k_1}\eta_2^{k_2}\eta_3^{k_3}.
\end{equation}

Writing $\underline{X}^\alpha_\infty$ for the subtree masses of the $\alpha$-Ford algebraic measure tree, we have for all $\alpha\in[0,1]$ that $\E[f^{(0,0,0)}(\underline{X}^\alpha_\infty)]=1$ and
\begin{equation}
	\E[f^{(1,0,0)}(\underline{X}^\alpha_\infty)]=\E[f^{(0,1,0)}(\underline{X}^\alpha_\infty)]=\E[f^{(0,0,1)}(\underline{X}^\alpha_\infty)]=\frac{1}{3}.
\end{equation}
Moreover, the following recursive relations hold:
\begin{lemma}[Moments of subtree mass distribution of $\alpha$-Ford] For all $\alpha\in[0,1]$ and $\underline{k}\in\mathbb{N}_0^3$,
\begin{equation}
\begin{aligned}
	\E\big[f^{(\underline{k})}(\underline{X}^\alpha_\infty)\big]
 &=
   \frac{1}{(S+3)(S+2-3\alpha)}\Big(\sum_{i=1}^3\mathbf{1}_{\{k_i\not =0\}}(k_i+1)(k_i-\alpha)
   \E\big[f^{(\underline{k}-e_i)}(\underline{X}^\alpha_\infty)\big]
   \\
	&\;\;+(2-3\alpha)\big({\bf 1}_{k_1=k_2=0}+{\bf 1}_{k_2=k_3=0}+{\bf 1}_{k_3=k_1=0}\big)
  \\
	&\;\;\; +\frac{\alpha}{2}\sum_{i=1}^3{\bf 1}_{k_i=0}\sum_{j\neq i=1}^3
    \sum_{p_j=1}^{k_j}{k_j\choose p}\E\big[f^{(\underline{k}+(p-1)e_i-pe_j)}(\underline{X}^\alpha_\infty)\big]\Big),
\end{aligned}
\end{equation}
where $S=k_1+k_2+k_3$.
\label{Lem:002}
\end{lemma}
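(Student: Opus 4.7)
The strategy is to apply the invariance of the $\alpha$-Ford algebraic measure tree $\PP_{\mathrm{Ford}}^\alpha$ under the $\alpha$-Ford diffusion, established in the previous section, to the polynomial test functions $f^{(\underline{k})}(\underline{\eta})=\eta_1^{k_1}\eta_2^{k_2}\eta_3^{k_3}$. Since $f^{(\underline{k})}$ is smooth, it lies in the extended domain of $\Omega_\alpha$, and writing $m(\underline{k}):=\E[f^{(\underline{k})}(\underline{X}^\alpha_\infty)]$, invariance yields
\begin{equation*}
  \int \Omega_\alpha\Phi^{f^{(\underline{k})}}(\chi)\,\PP_{\mathrm{Ford}}^\alpha(\mathrm{d}\chi)=0.
\end{equation*}
The proof then reduces to a term-by-term evaluation of the five pieces of $\Omega_\alpha f^{(\underline{k})}$, using $\partial_i f^{(\underline{k})}=k_i f^{(\underline{k}-e_i)}$ (with the convention that the term vanishes when an index becomes negative), and solving for $m(\underline{k})$.

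The first three pieces are routine polynomial calculus. The Wright--Fisher piece integrates to $\sum_i k_i(k_i-1)m(\underline{k}-e_i)-S(S-1)m(\underline{k})$; the drift piece $(2-\alpha)\sum_i(1-3\eta_i)\partial_i f^{(\underline{k})}$ integrates to $(2-\alpha)\sum_i k_i m(\underline{k}-e_i)-3(2-\alpha)Sm(\underline{k})$; and the catastrophe piece contributes $(2-3\alpha)\bigl[\sum_i\mathbf{1}_{k_j=k_l=0,\,j,l\ne i}-3m(\underline{k})\bigr]$, since $f^{(\underline{k})}(e_i)$ is nonzero exactly when the two off-$i$ entries of $\underline{k}$ vanish. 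Collecting the coefficient of $m(\underline{k})$ from these three pieces produces $S(S-1)+3(2-\alpha)S+3(2-3\alpha)=(S+3)(S+2-3\alpha)$, the desired prefactor on the left-hand side.

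The subtle part is combining the migration and reflection pieces. The binomial theorem identifies $\eta_i^{-1}(f^{(\underline{k})}\circ\theta_{i,j}-f^{(\underline{k})})$ as a genuine polynomial in $\underline{\eta}$: it equals $-f^{(\underline{k}-e_i)}$ when $k_i\geq 1$ and $\sum_{p=1}^{k_j}\binom{k_j}{p}\eta_i^{p-1}\eta_j^{k_j-p}\eta_l^{k_l}$ when $k_i=0$. I split the migration sum by whether $k_i=0$ or $k_i\ge1$, and within $k_i=0$ separate the $p=1$ summand (the only place where $\mathbf{1}_{\eta_i\ne0}$ genuinely matters, since $\eta_i^{p-1}$ kills the indicator for $p\ge 2$), isolating the boundary integrals of the form $\E[\eta_j^{k_j-1}\eta_l^{k_l}\mathbf{1}_{\eta_i=0}]$. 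A swap of indices $i\leftrightarrow j$ then pairs each such boundary integral exactly with the $+\mathbf{1}_{\eta_j=0}$ part of the reflection piece, while the $-\mathbf{1}_{\eta_i=0}$ part of reflection cancels the $\mathbf{1}_{\eta_i\ne0}$ produced by the $k_i\ge 1$ migration contribution (both being nontrivial only when $k_i=1$). The net residual is
\begin{equation*}
  \text{(Mig)}+\text{(Refl)}=\frac{\alpha}{2}\sum_i\mathbf{1}_{k_i=0}\sum_{j\ne i}\sum_{p=1}^{k_j}\binom{k_j}{p}m(\underline{k}+(p-1)e_i-pe_j)-\alpha\sum_i\mathbf{1}_{k_i\ne 0}m(\underline{k}-e_i),
\end{equation*}
and the extra term $-\alpha\sum_i\mathbf{1}_{k_i\ne0}m(\underline{k}-e_i)$ converts the Wright--Fisher-plus-drift coefficient $k_i(k_i-1)+(2-\alpha)k_i=k_i(k_i+1-\alpha)$ into $(k_i+1)(k_i-\alpha)$, precisely as required.

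Dividing the rearranged invariance identity by $(S+3)(S+2-3\alpha)$ then produces the stated recursion. The main technical obstacle throughout is the bookkeeping of the two families of boundary indicators $\mathbf{1}_{\eta_i=0}$ appearing in the migration and reflection pieces; the binomial expansion, the $p=1$ versus $p\ge 2$ split, and the index swap together are what makes the required cancellation transparent. In the Kingman specialisation $\alpha=0$, the migration and reflection pieces vanish identically and the recursion collapses to a three-term relation in $m(\underline{k})$ and $m(\underline{k}-e_i)$, which can be solved by induction on $S=k_1+k_2+k_3$ to yield the closed form of Corollary~\ref{Lem:001}.
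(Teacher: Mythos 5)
Your proposal is correct and follows essentially the same route as the paper's proof: apply the generator to the monomial mass polynomials $f^{(\underline{k})}$, use stationarity of the $\alpha$-Ford tree to set the integrated generator to zero, and identify the coefficients $(S+3)(S+2-3\alpha)$ and $(k_i+1)(k_i-\alpha)$ after combining the Wright--Fisher, drift, and migration/reflection contributions. If anything, your bookkeeping of the boundary indicators $\mathbf{1}_{\eta_i=0}$ (the $p=1$ versus $p\ge 2$ split and the index swap) is spelled out more carefully than in the paper's own computation.
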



\begin{proof} Choose $\underline{k}\in\mathbb{N}_0^3\setminus\{(0,0,0),(1,0,0),(0,1,0),(0,0,1)\}$, and let $\alpha\in[0,1]$. As
for all $\underline{\eta}\in(0,1)^3$ and $i=1,2,3$,
\begin{equation}
\label{e:016a}
    \partial_i f^{(\underline{k})}(\underline{\eta})
    =\mathbf{1}_{\{k_i\not =0\}}k_i f^{(\underline{k}-e_i)}(\underline{\eta}),
\end{equation}
and all $i,j\in\{1,2,3\}$,
\begin{equation}
\label{e:016b}
    \partial_{i,j} f^{(\underline{k})}(\underline{\eta})
    =\mathbf{1}_{\{k_i,k_j\not =0\}}\big(k_i-\delta_{i,j}\big)k_j f^{(\underline{k}-e_i-e_j)}(\underline{\eta}),
\end{equation}
it follows that for all $\underline{\eta}\in(0,1)^3$,
\begin{equation}
\begin{aligned}
\label{e:017}
	\Omega_\alpha \Phi^{f^{\underline{k}}}(\chi)
  &=
     \int\mu^{\otimes 3}(\mathrm{d}\underline{u})\,\Big(\sum_{i,j=1}^{3}\mathbf{1}_{\{k_i,k_j\not =0\}}\big(k_i-\delta_{i,j}\big)k_j
     \big(\delta_{ij}f^{(\underline{k}-e_i)}(\underline{\eta})-f^{(\underline{k})}(\underline{\eta})\big)
     \\
     &\;\;
     +(2-\alpha)\sum_{i=1}^3\mathbf{1}_{\{k_i\not =0\}}k_i\big(f^{(\underline{k}-e_i)}(\underline{\eta})-3f^{(\underline{k})}(\underline{\eta})\big)
     \\
	&\;\; +(2-3\alpha)\sum_{1\le i<j\le 3}\mathbf{1}_{\{k_i=k_j=0\}}-3(2-3\alpha)f^{(\underline{k})}(\underline{\eta})
  \\
	&\;\; +\frac{\alpha}{2}\sum_{i=1}^3\mathbf{1}_{\{k_i=0\}}\sum_{j=1;j\not =i}^3{k_j\choose p_j }f^{(\underline{k}+(p_j-1)e_i-p_je_j)}(\underline{\eta})
\\
&\;\;-\alpha\sum_{i=1}^3\mathbf{1}_{\{k_i\not =0\}}\sum_{j=1;j\not =i}^3{k_j\choose p_j }f^{(\underline{k}-e_i)}(\underline{\eta}).
\end{aligned}
\end{equation}
Using that $\Omega_\alpha \Phi^{f^{\underline{k}}}(\chi_\infty)=0$ for all $\underline{k}$, implies that
\begin{equation}
\begin{aligned}
\label{e:017a}
	\E\big[f^{(\underline{k})}(\underline{X}^\alpha_\infty)\big]&\Big(\sum_{i,j=1}^{3}\mathbf{1}_{\{k_i,k_j\not =0\}}\big(k_i-\delta_{i,j}\big)k_j+3(2-\alpha)\sum_{i=1}^3\mathbf{1}_{\{k_i\not =0\}}k_i+3(2-3\alpha)\Big)
\\
  =&
  \Big(\sum_{i,j=1}^{3}\mathbf{1}_{\{k_i,k_j\not =0\}}\big(k_i-\delta_{i,j}\big)k_j\delta_{ij}+(2-\alpha)\sum_{i=1}^3\mathbf{1}_{\{k_i\not =0\}}k_i\Big)\E\big[f^{(\underline{k}-e_i)}(\underline{X}^\alpha_\infty)\big]\\
  &+(2-3\alpha)\sum_{1\le i<j\le 3}\mathbf{1}_{\{k_i=k_j=0\}}
  \\
  &+\frac{\alpha}{2}\sum_{i=1}^3\mathbf{1}_{\{k_i=0\}}\sum_{j=1;j\not =i}^3{k_j\choose p_j }\E\big[f^{(\underline{k}+(p_j-1)e_i-p_je_j)}(\underline{X}^\alpha_\infty)\big].
\end{aligned}
\end{equation}
Note that
\begin{equation}
\begin{aligned}
\label{e:017b}
   \sum_{i,j=1}^{3}&\mathbf{1}_{\{k_i,k_j\not =0\}}\big(k_i-\delta_{i,j}\big)k_j+3(2-\alpha)\sum_{i=1}^3\mathbf{1}_{\{k_i\not =0\}}k_i+3(2-3\alpha)
   \\
   &=
   \big(\sum_{i=1}^3k_i\big)^2-\sum_{i=1}^3k_i+3(2-\alpha)\sum_{i=1}^3k_i+3(2-3\alpha)
   \\
   &=
   \big(\sum_{i=1}^3k_i+3\big)\big(\sum_{i=1}^3k_i+(2-3\alpha)\big)
\end{aligned}
\end{equation}
and
\begin{equation}
\label{e:017c}
   \sum_{i,j=1}^{3}\mathbf{1}_{\{k_i,k_j\not =0\}}\big(k_i-\delta_{i,j}\big)k_j\delta_{ij}+(2-\alpha)\sum_{i=1}^3\mathbf{1}_{\{k_i\not =0\}}k_i
   -\alpha\sum_{i=1}^3\mathbf{1}_{\{k_i\not =0\}}
   =
   \sum_{i=1}^3\mathbf{1}_{\{k_i\not =0\}}(k_i+1)\big(k_i-\alpha\big),
\end{equation}
which finishes the proof for $\underline{k}\in\mathbb{N}_0^3\setminus\{(0,0,0),(1,0,0),(0,1,0),(0,0,1)\}$. It is trivial to show it for $\underline{k}\in\{(0,0,0),(1,0,0),(0,1,0),(0,0,1)\}$.
\end{proof}

For $k_2=k_3=0$, the recurrence relation becomes
\begin{align*}
	\E\left[f^{(k_1,0,0)}(\underline{X}^\alpha_\infty)\right]= & \frac{1}{(k_1+3)(k_1+2-3\alpha)}\bigg( (k_1+1)(k_1-\alpha)\E\left[f^{(k_1-e_1,0,0)}(\underline{X}^\alpha_\infty)\right]\\
	& +(2-3\alpha) +\alpha\sum_{p=1}^{k_1}{k_1\choose p}\E\left[f^{(k_1-p,p-1,0)}(\underline{X}^\alpha_\infty)\right]\bigg).
\end{align*}
We get in particular
\begin{align*}
	\E\left[\underline{X}^\alpha_{1,\infty}\right] & = \frac{1}{3},\\
	\E\left[(\underline{X}^\alpha_{1,\infty})^2\right] & = \frac{1}{5},\E\left[\underline{X}_{1,\infty}\underline{X}_{2,\infty}\right]  = \frac{1}{15}\\
	\E\left[(\underline{X}^\alpha_{1,\infty})^3\right] & 
=\frac{11-7\alpha}{15(5-3\alpha)},\\
	\E\left[(\underline{X}^\alpha_{1,\infty})^4\right] & = \frac{37-25\alpha}{63(5-3\alpha)},\\
	\E\left[(\underline{X}^\alpha_{1,\infty})^5\right] & = \frac{145-165\alpha+44\alpha^2}{42(5-3\alpha)(7-3\alpha)}.
\end{align*}

In the case $\alpha=0$, we have a general formula: for all $k_1\in\N$,
\begin{equation}
	\E\left[(\underline{X}^0_{1,\infty})^{k_1}\right]=\frac{2k_1(k_1(k_1+6)+11)+36}{3(k_1+1)(k_1+2)^2(k_1+3)}.
\end{equation}

\begin{proposition}[Representation in case of the Kingman algebraic measure tree]
Let $\PP_\mathrm{Kin}$ the law of the Kingman algebraic measure tree. Let $B_{1,2}$ and $B_{2,2}$ be two independent beta random variables, such that $B_{1,2}$ has law $\mathrm{Beta}(1,2)$ and $B_{2,2}$ has law $\mathrm{Beta}(2,2)$. Then for all $f\colon \Delta_2\rightarrow\R$ continuous bounded,
\begin{align*}
	\E_\mathrm{Kin}\left[\int_{T^3}\mu^{\otimes 3}(\mathrm{d}\underline{u}) f(\eta_{(T,c,\mu)}(\underline{u}))\right]=\frac{1}{6}\sum_{\pi\in\CS_3}\E\big[f\circ\pi^*(B_{1,2}B_{2,2},B_{1,2}(1-B_{2,2}),1-B_{1,2})\big],
\end{align*}
where $\CS_3$ is the set of permutations of $\{1,2,3\}$, and for $\pi\in\CS_3$, $\pi^*\colon\Delta_2\rightarrow\Delta_2$ is the induced map $\pi^*(\underline{x})=(x_{\pi(1)},x_{\pi(2)},x_{\pi(3)})$.
\label{P:001}
\end{proposition}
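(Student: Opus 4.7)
The plan is to identify the two Borel probability measures on the compact two-simplex $\Delta_2$ by matching all polynomial moments. Since polynomials are dense in $\CC(\Delta_2)$ by Stone--Weierstrass, this will establish the integral identity for every bounded continuous $f\colon \Delta_2 \to \R$. Let $\nu^{\mathrm{Kin}}$ denote the annealed sample subtree mass distribution of the Kingman algebraic measure tree, i.e.\ the law of $\underline X^{0}_\infty$, and let $\nu^\star$ denote the law of $\pi^*(\underline V)$, where $\pi$ is uniform on $\CS_3$ and independent of $\underline V := (B_{1,2}B_{2,2},\, B_{1,2}(1-B_{2,2}),\, 1-B_{1,2})$. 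By the $\alpha=0$ case of Corollary~\ref{Lem:001}, the joint moments of $\nu^{\mathrm{Kin}}$ admit the closed form \eqref{e:025}, so it suffices to prove that the joint moments of $\nu^\star$ coincide with that expression.

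The moments of $\nu^\star$ can be computed directly. By independence of $B_{1,2}$ and $B_{2,2}$,
\begin{equation*}
\E\bigl[V_1^{k_1}V_2^{k_2}V_3^{k_3}\bigr] = \E\bigl[B_{1,2}^{k_1+k_2}(1-B_{1,2})^{k_3}\bigr]\cdot \E\bigl[B_{2,2}^{k_1}(1-B_{2,2})^{k_2}\bigr],
\end{equation*}
and the two beta integrals, using that the densities are $2(1-x)$ and $6x(1-x)$ on $[0,1]$, evaluate to
\begin{equation*}
\E\bigl[B_{1,2}^{a}(1-B_{1,2})^{b}\bigr]=\tfrac{2\,\Gamma(a+1)\Gamma(b+2)}{\Gamma(a+b+3)},\qquad \E\bigl[B_{2,2}^{a}(1-B_{2,2})^{b}\bigr]=\tfrac{6\,\Gamma(a+2)\Gamma(b+2)}{\Gamma(a+b+4)}.
\end{equation*}
Setting $S=k_1+k_2+k_3$ and averaging over $\pi\in\CS_3$, while using that each unordered pair $\{i,j\}\subset\{1,2,3\}$ arises as $\{\pi^{-1}(1),\pi^{-1}(2)\}$ for exactly two permutations, I obtain
\begin{equation*}
\tfrac{1}{6}\sum_{\pi\in\CS_3}\E\bigl[f^{\underline k}\circ \pi^*(\underline V)\bigr] = 4\,\frac{\prod_{j=1}^3\Gamma(k_j+2)}{\Gamma(S+3)}\sum_{1\le i<j\le 3}\frac{\Gamma(k_i+k_j+1)}{\Gamma(k_i+k_j+4)},
\end{equation*}
which is precisely \eqref{e:025}. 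Hence $\nu^{\mathrm{Kin}}$ and $\nu^\star$ agree on all monomials and so on all continuous test functions, proving the proposition.

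The only delicate point is the symmetrization bookkeeping, together with the inductive derivation of the closed form \eqref{e:025} from the recursion of Corollary~\ref{Lem:001} (an induction on $S$ verifying that the ansatz satisfies the recursion with the correct boundary values). Alternatively, one can bypass \eqref{e:025} entirely and directly check that the moments of $\nu^\star$ computed above satisfy the same recursion with the initial values $\E[f^{(0,0,0)}]=1$ and $\E[f^{e_i}]=\tfrac{1}{3}$, which uniquely determines them; both routes culminate in the moment-matching argument described above.
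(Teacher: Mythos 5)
Your proposal is correct and follows essentially the same route as the paper: compute the joint moments of the symmetrized vector $(B_{1,2}B_{2,2},B_{1,2}(1-B_{2,2}),1-B_{1,2})$ via the two beta integrals, and identify them with the moments of the Kingman subtree-mass distribution, which are pinned down by the $\alpha=0$ stationarity recursion together with Stone--Weierstrass on the compact simplex. The only caveat is that the closed form \eqref{e:025} is not established independently of this proposition in the paper, so the ``alternative'' route you mention at the end --- verifying directly that the computed moments of $\nu^\star$ satisfy the recursion of Lemma~\ref{Lem:002} with $\alpha=0$ --- is precisely what the paper does and is the version you should make primary to avoid circularity.
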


\begin{proof}
We write $\underline{\tilde{\eta}}:=(\tilde{\eta}_1,\tilde{\eta}_2,\tilde{\eta}_3):=(B_{1,2}B_{2,2},B_{1,2}(1-B_{2,2}),1-B_{1,2})$ and define the $\Delta_2$-valued random variable $\underline{\eta}=(\eta_1,\eta_2,\eta_3)$ by the relations, for all $f\colon \Delta_2\rightarrow\R$ continuous bounded,
\begin{equation}
	\E[f(\underline{\eta})]=\frac{1}{6}\sum_{\pi\in\CS_3}\E\big[f\circ\pi^*(\underline{\tilde{\eta}})\big]=\frac{1}{6}\sum_{\pi\in\CS_3}\E\big[f\circ\pi^*(B_{1,2}B_{2,2},B_{1,2}(1-B_{2,2}),1-B_{1,2})\big].
\end{equation}
To show the result, we prove that the moments of $\eta$ coincide with the moments of the subtree masses distribution in the equilibrium of the Kingman diffusion. For this, it is enough to show that the moments of $\eta$ satisfy the recurrence relations on moments obtained for the case $\alpha=0$, that is, for all $\underline{k}\in\N^3$,
\begin{equation}\label{e:momentrelation}
	\E\left[f^{(\underline{k})}(\underline{\eta})\right]= \frac{1}{(S+3)(S+2)}\left(\sum_{i=1}^3k_i(k_i+1)\E\left[f^{(\underline{k}-e_i)}(\underline{\eta})\right]+2\left({\bf 1}_{k_1=k_2=0}+{\bf 1}_{k_2=k_3=0}+{\bf 1}_{k_3=k_1=0}\right)\right),
\end{equation}
where $S=k_1+k_2+k_3$. We first calculate all the moments of $\underline{\tilde{\eta}}$. Since $B_{1,2}$ and $B_{2,2}$ are independent variables, we have
\begin{align*}
	\E\left[f^{(\underline{k})}(\underline{\tilde{\eta}})\right] & = \E\left[B_{1,2}^{k_1}B_{2,2}^{k_1}B_{1,2}^{k_2}(1-B_{2,2})^{k_2}(1-B_{1,2})^{k_3}\right]\\
	& = \E\left[B_{1,2}^{k_1+k_2}(1-B_{1,2})^{k_3}\right]\E\left[B_{2,2}^{k_1}(1-B_{2,2})^{k_2}\right]\\
	& = \frac{\Gamma(3)\Gamma(k_1+k_2+1)\Gamma(k_3+2)}{\Gamma(1)\Gamma(2)\Gamma(S+3)}\frac{\Gamma(4)\Gamma(k_1+2)\Gamma(k_2+2)}{\Gamma(2)^2\Gamma(k_1+k_2+4)}\\
	& = 12\frac{\prod_{k=j}^3\Gamma(k_j+2)}{\Gamma(S+3)}\frac{\Gamma(k_1+k_2+1)}{\Gamma(k_1+k_2+4)}.
\end{align*}
Thus,
\begin{align*}
	\E\left[f^{(\underline{k})}(\underline{\eta})\right] = 4\frac{\prod_{j=1}^3\Gamma(k_j+2)}{\Gamma(S+3)}\sum_{1\leq j<l\leq3}\frac{\Gamma(k_j+k_l+1)}{\Gamma(k_j+k_l+4)}.
\end{align*}

Suppose first that ${\bf 1}_{k_1=k_2=0}+{\bf 1}_{k_2=k_3=0}+{\bf 1}_{k_3=k_1=0}=0$, and recall that for all $n\in\N$, $\Gamma(n+1)=n\Gamma(n)=n!$. Then the right hand side of \eqref{e:momentrelation} is
\begin{align*}
	RHS &=\frac{1}{(S+2)(S+3)}\sum_{i=1}^3k_i(k_i+1)4\frac{\prod_{j=1}^3\Gamma(k_j-\delta_{ij}+2)}{\Gamma(S+2)}\sum_{1\leq j<l\leq3}\frac{\Gamma(k_j-\delta_{ij}+k_l-\delta_{il}+1)}{\Gamma(k_j-\delta_{ij}+k_l-\delta_{il}+4)}\\
	&=4\frac{\prod_{j=1}^3\Gamma(k_j+2)}{\Gamma(S+3)(S+3)}\sum_{\substack{\{j,l,m\}=\{1,2,3\}\\1\leq j<l\leq3}}\left(\frac{(k_j+k_l)\Gamma(k_j+k_l)}{\Gamma(k_j+k_l+3)}+k_i\frac{\Gamma(k_j+k_l+1)}{\Gamma(k_j+k_l+4)}\right)\\
	&=4\frac{\prod_{j=1}^3\Gamma(k_j+2)}{\Gamma(S+3)(S+3)}\sum_{\substack{\{j,l,m\}=\{1,2,3\}\\1\leq j<l\leq3}}\frac{\Gamma(k_j+k_l+1)}{\Gamma(k_j+k_l+4)}\left((k_j+k_l+3)+k_i\right)\\
	&=4\frac{\prod_{j=1}^3\Gamma(k_j+2)}{\Gamma(S+3)}\sum_{1\leq j<l\leq3}\frac{\Gamma(k_j+k_l+1)}{\Gamma(k_j+k_l+4)}.
\end{align*}
Suppose now that $k_2=k_3=0$. Then $f^{(\underline{k})}(\underline{\eta})=\eta_1^{k_1}$ and we have
\begin{equation}
	\E\left[\eta_1^{k_1}\right] =\frac{4}{k_1+2}\left(\frac{1}{6}+\frac{2}{(k_1+1)(k_1+2)(k_1+3)}\right),
\end{equation}
and we can easily check that for all $k_1\in\N$,
\begin{equation}
	\E\left[\eta_1^{k_1}\right]=\frac{1}{(k_1+2)(k_1+3)}\left(k_1(k_1+1)\E\left[\eta_1^{k_1-1}\right]+2\right).
\end{equation}
\end{proof}

\appendix
\section{The cladogram obtained from Kingman's coalescent}
\label{S:appendix}

We show here that the random cladogram obtained from the Kingman $m$-coalescent has the distribution of the $\alpha$-Ford model with $m$ leaves. We first define precisely what we call the random cladogram obtained from a finite coalescent.

Let $m\in\N$. We consider $(P_t)_{t\geq0}$ a realization of the Kingman $m$-coalescent, which is an element of $\CD_{\Pi^m}([0,\infty))$ the space of c\`adl\`ag paths on the set $\Pi^m$ of all partitions of $\{1,...,m\}$. Write $(\CP_k)_{k=m,...,1}$ for the sequence of states taken by $(P_t)_t$, such that $|\CP_k|=k$. For $k=1,..,m$, we denote by $C_{k,1},...,C_{k,k}$ the equivalence classes of $\CP_k$ such that the smallest element $c_{r,k}$ of $C_{k,r}$ satifies $c_{k,1}<...<c_{k,k}$.

We then define the $m$-cladogram
\begin{equation}
	\Fc=\Fc((P_t)_t)
\end{equation}
by the following construction (see Fig. \ref{f:cladocoal}):
\begin{itemize}
	\item $k=m$: for $r=1,...,m$, let $l_r$ be a vertex of $\Fc$. These $m$ vertices will be the leaves of $\Fc$, with label $r$ for $l_r$. To each $C_{k,r}$, we will associate a vertex $v_{k,r}$ in $\Fc$. For $r=1,...,m$, we define $v_{m,r}=l_r$.
	\item $k=m-1,...,2$: there exists a unique $i\in\{1,...,k\}$ such that $C_{k,i}$ is the union of $C_{k+1,i}$ and $C_{k+1,j}$ for some $j>i$. Let $v_k$ be a vertex of $\Fc$, and $\{v_k,v_{k+1,i}\}$ and $\{v_k,v_{k+1,i}\}$ two edges of $\Fc$. We define:
	\begin{equation}
		v_{k,r}:=
		\begin{cases}
		v_{k+1,r}&\text{ if }r\in\{1,...,i-1\}\cup\{i+1,...,j-1\},\\
		v_k&\text{ if }r=i,\\
		v_{k+1,r+1}&\text{ if }r\in\{j,...,k\}
		\end{cases}
	\end{equation}
	\item $k=1$: let $\{v_{2,1},v_{2,2}\}$ be an edge of $\Fc$.
\end{itemize}

We define this way an $m$-cladogram $\Fc((P_t)_t)$ associated to a realization $((P_t)_t)$ of the Kingman $m$-coalescent. Putting weight $1/m$ on each leaf, we also define an algebraic measure tree in $\T_2^m$.

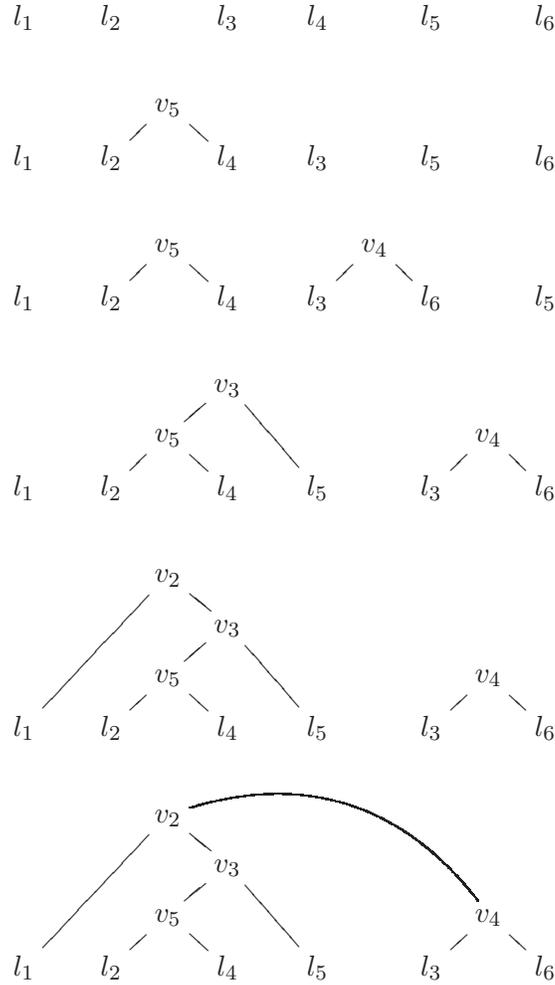
\begin{figure}[t!]
\[
\xymatrix@=0.5pc{
l_1&&l_2&&l_3&&l_4&&l_5&&l_6\\
&&&&&&&&&&\\
&&&v_5\ar@{-}[dl]\ar@{-}[dr]&&&&&&&\\
l_1&&l_2&&l_4&&l_3&&l_5&&l_6\\
&&&&&&&&&&\\
&&&v_5\ar@{-}[dl]\ar@{-}[dr]&&&&v_4\ar@{-}[dl]\ar@{-}[dr]&&&\\
l_1&&l_2&&l_4&&l_3&&l_6&&l_5\\
&&&&&&&&&&\\
&&&&v_3\ar@{-}[dl]\ar@{-}[ddrr]&&&&&&\\
&&&v_5\ar@{-}[dl]\ar@{-}[dr]&&&&&&v_4\ar@{-}[dl]\ar@{-}[dr]&\\
l_1&&l_2&&l_4&&l_5&&l_3&&l_6\\
&&&&&&&&&&\\
&&&v_2\ar@{-}[dr]\ar@{-}[dddlll]&&&&&&&\\
&&&&v_3\ar@{-}[dl]\ar@{-}[ddrr]&&&&&&\\
&&&v_5\ar@{-}[dl]\ar@{-}[dr]&&&&&&v_4\ar@{-}[dl]\ar@{-}[dr]&\\
l_1&&l_2&&l_4&&l_5&&l_3&&l_6\\
&&&&&&&&&&\\
&&&v_2\ar@{-}[dr]\ar@{-}[dddlll]\ar@/^2pc/@{-}[ddrrrrrr]&&&&&&&\\
&&&&v_3\ar@{-}[dl]\ar@{-}[ddrr]&&&&&&\\
&&&v_5\ar@{-}[dl]\ar@{-}[dr]&&&&&&v_4\ar@{-}[dl]\ar@{-}[dr]&\\
l_1&&l_2&&l_4&&l_5&&l_3&&l_6\\
}\]
\caption{Construction of the $6$-cladogram associated with the realization of a $6$-coalescent where $2$ and $4$ coalesce first, then $3$ and $6$, then $\{2,4\}$ and $\{3,6\}$, and so on...}\label{f:cladocoal}
\end{figure}

From this construction, we have that, for each $N\in\N$, the Kingman $N$-coalescent defines a random algebraic measure tree in $\T_2^N$, that we denote by $\tau_N$. The sequence $(\tau_N)_N$ takes values in the compact space $\T_2$, so we can show that the sequence is convergent by proving that each convergent subsequence converges to the same limit. But this results from the consistency property of the Kingman coalescents: for $N$ and $N'$ in $\N$, and for all $m$, the shape spanned by $m$-points sampled from $\tau_N$ and $\tau_{N'}$ have the same distribution in $\FC_m$. Therefore, we can introduced the Kingman algebraic measure tree in the following way.

\begin{proposition}
Let $m\in\N$ and $(K^m_t)_{t\geq0}$ be the Kingman $m$-coalescent. Then the random cladogram
\begin{equation}
	\Fc((K^m_t)_{t\geq0})
\end{equation}
has the distribution of the $(\alpha=0)$-Ford model on $m$-cladograms.
\end{proposition}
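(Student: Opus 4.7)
\smallskip\noindent\textbf{Proof plan.} I argue by induction on $m$. The base case $m=2$ is immediate since both constructions deterministically produce the unique $2$-cladogram. For $m\ge 3$ the plan is to verify that both distributions on $\FC_m$ satisfy the same recursion in terms of the distribution on $\FC_{m-1}$, namely
\begin{equation*}
  P_m(\Ft) \;=\; \frac{1}{\binom{m}{2}}\sum_{\{a,b\}\in\mathcal C(\Ft)} P_{m-1}\bigl(\Ft/\{a,b\}\bigr),\qquad \Ft\in\FC_m,
\end{equation*}
where $\mathcal C(\Ft)$ is the set of cherries of $\Ft$, and $\Ft/\{a,b\}\in\FC_{m-1}$ is obtained from $\Ft$ by deleting the two leaves $a,b$ and relabelling their common parent as a single new leaf $*$. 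Once the recursion is verified on both sides, the inductive hypothesis $P^{\mathrm{Kin}}_{m-1}=P^{\mathrm{Ford},\alpha=0}_{m-1}$ (which is exchangeable and hence independent of the specific label set used after contraction) immediately yields $P^{\mathrm{Kin}}_m=P^{\mathrm{Ford},\alpha=0}_m$.

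For the Kingman side, the recursion is a direct consequence of the strong Markov property applied at the first merging event of $(P_t)_t$. That first merge picks a pair $\{a,b\}$ of singletons uniformly among the $\binom{m}{2}$ possibilities, and by the construction of $\Fc((P_t)_t)$ the two leaves $a,b$ share the branch point $v_{m-1}$, i.e.\ $\{a,b\}$ is a cherry of $\Fc$. Conditionally on that pair, the remaining evolution is an independent Kingman $(m-1)$-coalescent on the $m-1$ resulting classes, and the recursive construction of $\Fc$ yields exactly $\Ft/\{a,b\}$ after identifying the merged class with the contracted leaf $*$. The event that the first merge equals $\{a,b\}$ forces $\{a,b\}$ to be a cherry of the final $\Ft$, which is why the sum is restricted to $\mathcal C(\Ft)$.

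For the Ford side, the recursion is obtained by reading the construction backwards through the last-inserted leaf combined with the terminal uniform permutation of labels. Before permuting, leaf $m$ is inserted at a uniformly chosen external edge of the Ford $(m-1)$-tree on $\{1,\dots,m-1\}$, equivalently it is paired with a uniform leaf $J\in\{1,\dots,m-1\}$ to form a new cherry $\{m,J\}$; after the uniform permutation $\sigma\in S_m$, the distinguished cherry $\{\sigma(m),\sigma(J)\}$ is uniform on $\binom{[m]}{2}$. Exchangeability of the Ford $\alpha=0$ $(m-1)$-cladogram on $\{1,\dots,m-1\}$, together with the independence of $\sigma$, $J$, and of the underlying $(m-1)$-subtree, shows that conditionally on the distinguished cherry being $C$ the contracted tree is distributed as the Ford $\alpha=0$ $(m-1)$-cladogram on the remaining labels (with the contracted leaf in the role of $*$). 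The main obstacle is this last bookkeeping step, since one needs to verify that the distinguished cherry and the complementary contracted tree have the correct joint law — but this reduces cleanly to exchangeability of the Ford $(m-1)$-model. Summing over $C\in\mathcal C(\Ft)$ then gives the claimed recursion, completing the induction.
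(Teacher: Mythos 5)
Your proof is correct and follows essentially the same route as the paper's: induction on $m$, identifying the first coalescence event of the Kingman coalescent with the last leaf insertion of the $(\alpha=0)$-Ford construction, and using exchangeability to match the two. The only difference is presentational — you package the common step as an explicit cherry-contraction recursion $P_m(\Ft)=\binom{m}{2}^{-1}\sum_{\{a,b\}\in\mathcal C(\Ft)}P_{m-1}(\Ft/\{a,b\})$ with careful label bookkeeping, whereas the paper argues directly at the level of unlabelled shapes.
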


\begin{proof}
We show the result by induction. For $m=1,2,3$, the result is trivial since there is only one $m$-cladogram. Suppose $m\geq4$ and that the result is true for $m-1$.

For both random cladograms, we have exchangeability, that is both distributions are symmetric under permutation of leaf labels. For this reason, we can focus on the shape of the cladograms, i.e.\ the cladograms where we forgot the labelling. To show our claim, we use that the two construction procedures described above are somehow reverse and that the first coalescence event in the Kingman coalescnt corresponds to the last insertion of a leaf in the $(\alpha=0)$-Ford model.

Let $(K^m_t)_t$ be the Kingman $m$-coalescent. We consider the construction of $\Fc=\Fc((K^m_t)_t)$ and ignore the leaf labels. We now see $C_{k,r}$ and $v_{k,r}$, $k=1,...,m$, $r=1,...,k$ as random objects. The second step of the construction ($k=m-1$) consists in defining a vertex $v_{m-1}$ of $\Fc$, and the two edges $\{v_{m-1},v_{m,i}\}$ and $\{v_{m-1},v_{m,i}\}$ of $\Fc$, according to the first coalescence in $K^m$.

Define the processus $(\widetilde{K}_t^{m-1})_{t\geq0}$ on $\Pi^{m-1}$ by declaring that $a,b\in\{1,...,m-1\}$ are in the same block of the partition $\widetilde{K}_t^{m-1}$ if and only if $C_{m,a}$ and $C_{m,b}$ lie in the same block of $K_{\tau+t}^m$ where $\tau$ is the time of the first coalescence. Then we know that $(\widetilde{K}_t^{m-1})_{t\geq0}$ is the Kingman $(m-1)$-coalescent. Therefore, by the induction hypothesis, the cladogram obtained in the construction of $\Fc$ from the vertices $v_{m-1,1},...,v_{m-1,m-1}$ has the distribution of the $(\alpha=0)$-Ford model on $(m-1)$-cladograms.

Now we know that the first coalescence in $K^m$ is independent from $(\widetilde{K}_t^{m-1})_{t\geq0}$. Therefore the second step of the construction consists in choosing uniformly at random one leaf of $(\Fc(\widetilde{K}_t^{m-1}))_{t\geq0}$ and in adding two adjacent edges with vertices to this leaf. This step corresponds exactly to the $m$-th step of the construction in the $(\alpha=0)$-Ford model, where we forgot the leaf labels.
\end{proof}

Using this result and the consistency property of the Kingman finite coalescents, we get what Ford calls the \emph{deletion stability} for the $(\alpha=0)$-Ford model. That is, consider the $m$-cladogram with law the $(\alpha=0)$-Ford model. If we remove at random a leaf and relabel the leaves from $1$ to $m-1$ in a way that the order remains the same, we obtain a random $(m-1)$-cladogram with distribution the $(\alpha=0)$-Ford model. See \cite[Proposition 42]{Ford2005} for a proof of the deletion stability of the $\alpha$-Ford model for all $\alpha\in[0,1]$ through a combinatorial argument.

\bibliography{kingman}
\bibliographystyle{alpha}
 \end{document}